\documentclass[11pt,a4paper]{article}

\usepackage{amssymb,amsthm,amsmath}
\usepackage[pdftex]{graphicx}
\usepackage{graphicx}
\usepackage[margin=1in]{geometry}
\usepackage{color}
\usepackage{hyperref}
\usepackage{comment}
\usepackage{bm}
\usepackage{cleveref}
\usepackage[normalem]{ulem}
\usepackage{float}
\usepackage{multicol}
\usepackage{tikz-cd}
\newtheorem{theorem}{Theorem}[section]
\newtheorem{lemma}[theorem]{Lemma}
\newtheorem{proposition}[theorem]{Proposition}

\theoremstyle{definition}

\theoremstyle{remark}

\newtheorem{assume}{Assumption}

\newcommand{\cB}{\mathcal{B}}

\newcommand{\cI}{\mathcal{I}}
\newcommand{\cJ}{\mathcal{J}}

\newcommand{\cM}{\mathcal{M}}

\newcommand{\cP}{\mathcal{P}}
\newcommand{\cQ}{\mathcal{Q}}

\newcommand{\cS}{\mathcal{S}}
\newcommand{\cT}{\mathcal{T}}

\newcommand{\cX}{\mathcal{X}}
\newcommand{\cY}{\mathcal{Y}}
\newcommand{\cZ}{\mathcal{Z}}

\newcommand{\bM}{\mathbf{M}}

\newcommand{\bP}{\mathbf{P}}
\newcommand{\bQ}{\mathbf{Q}}

\newcommand{\balpha}{\bm{\alpha}}

\newcommand{\bc}{\mathbf{c}}

\newcommand{\bv}{\mathbf{v}}

\newcommand{\bx}{\mathbf{x}}
\newcommand{\by}{\mathbf{y}}
\newcommand{\bz}{\mathbf{z}}

\newcommand{\RR}{\mathbb{R}}
\newcommand{\NN}{\mathbb{N}}
\newcommand{\iy}{\mathit{y}}
\newcommand{\biy}{\bm{\mathit{\iy}}}

\newcommand{\ropt}{\rho^\star}

\def\degg#1{\lceil#1\rceil}

\def\pMTX2r{\mathcal{P\!M}(\cT(\cS)_{2r})}
\def\pMQX2r{\mathcal{P\!M}(\cQ(\cS)_{2r})}
\def\pMkTX2r{\mathcal{P\!M}_k(\cT(\cS)_{2r})}
\def\pMkQX2r{\mathcal{P\!M}_k(\cQ(\cS)_{2r})}

\title{Quadratic Optimization over Probability Measures with Coupling Constraints}
\author{Hoang Anh Tran, Yong Sheng Soh}

\begin{document}

\maketitle

\begin{abstract}
    In this paper we consider the task of solving an optimization instance in which the objective is quadratic and where the decision variable is a probability measure. Our class of problems are motivated by applications arising from optimal transport (with the Gromov-Wasserstein problem being a prominent example) and energy landscape minimization. Because the objective depends quadratically on the decision variable, our class of problems fall outside the usual modeling framework of the Generalized Moment Problems (which necessarily requires the objective to be linear). To this end, we propose a hierarchy of convex relaxations that is based on searching over probability measures over products of the base space. These have a natural interpretation with the moment Sum-of-squares (SOS) hierarchy -- a prominent framework for solving polynomial optimization instances, which we adapt to accommodate probability measures. A key conceptual contribution is to introduce a notion of positive-semidefiniteness that extends the usual notion of positive-semidefiniteness over matrices.  

    Under the assumption that the decision variables satisfy certain marginal constraints (as in the Kantorovich formulation of the optimal transport problem), we establish convergence of our hierarchy towards the globally optimal solution.  Under the additional assumption that the objective is a polynomial, we propose a moment--SOS type hierarchy of finite dimensional semidefinite programs whose optimal solution converges to that of the original quadratic optimization over measures. We demonstrate our framework with numerical experiments.

    More generally, the class of optimization instances over measures and where the objective and/or constraint depends on the decision in a polynomial way is a fundamental problem. It is hoped that our work provides a road-map as to how the ideas of the SOS -- ordinarily developed in the context of polynomial optimization -- may be applied to a broader class of non-linear problems involving measures.
\end{abstract}

\section{Introduction}

In this paper, we consider the task of solving an optimization instance in which we seek a probability measure that minimizes a quadratic objective, possibly subject to additional linear equality or inequality constraints.  Formally, let $\cX \subseteq \mathbb{R}^{d}$ be a Borel set, and let $\mathcal{P}(\cX)$ denote the set of probability measures over $\cX$.  Consider the following optimization instance:  
\begin{equation} \label{eq:general_quadform}
\begin{aligned}
\rho^{\star} ~ := ~ \inf_{\pi \in \mathcal{P}(\cX)} \quad & \int_{\cX^2} c(\bx_1,\bx_2) d\pi(\bx_1) d\pi(\bx_2) \\
\mathrm{s.t.} \quad & \int_{\cX} h_{\gamma} (\bx) d\pi(\bx) \leq b_{\gamma} \quad \forall \gamma \in \Gamma
\end{aligned}.
\end{equation}
Here, $c:\cX \times \cX \to \mathbb{R}$ is a measurable cost function, while $h_{\gamma}:\cX\to\mathbb{R}$ and $b_{\gamma}\in\mathbb{R}$ specify linear inequality constraints.  Equality constraints can be modelled within~\eqref{eq:general_quadform} by representing these as a pair of opposite inequalities.  

%The class of problems captured by \eqref{eq:general_quadform} are challenging to solve, in large part because (i) these take place over the space of probability measures, which is infinite dimensional, and (ii) the objective has quadratic dependency on the decision variable in the sense that $\pi$ appears twice in the integral.  

{\bf The Generalized Moment Problem.}  The class of problems we study \eqref{eq:general_quadform} is closely related to a class of problems known as the {\em Generalized Moment Problem (GMP)}.  Formally, the GMP seeks a probability measure that minimizes a linear objective, possibly subject to additional linear inequality constraints: 
\begin{equation} \label{eq:gmp}
\begin{aligned}
\inf_{\pi \in \mathcal{P}(\cX)} \quad & \int_\cX c_0(\bx) d\pi(\bx) \\
\mathrm{s.t.} \quad & \int_\cX h_{\gamma} (\bx) d\pi(\bx) \leq b_{\gamma} \quad \text{for all } \gamma \in \Gamma %\\
%& \mathrm{supp} (\pi) \subset \{ x : g_1(x) \geq 0, \ldots, g_m(x) \geq 0 \}  
\end{aligned}.
\end{equation}
(To be clear, the formulation of the GMP such as in \cite{Las:09} is often stated more generally; for instance, in ~\cite{Las:09}, the decision variable $\pi$ may be extended to finite signed measures.) 

The problem we consider \eqref{eq:general_quadform} falls outside the modeling framework of GMPs as it is described in \eqref{eq:gmp} because the objective has {\em quadratic} dependence on $\pi$ whereas the objective as well as the constraints in \eqref{eq:gmp} all have {\em linear} dependence on $\pi$.  The essence of this paper is to develop techniques that allow us to solve optimization instances that deal with the quadratic dependency.  The basis of our ideas come from the moment-SOS hierarchy, which is a principled framework for solving Polynomial Optimization instances to global optimality (see e.g.,~\cite{BPT:12,Las:01,Las:09,Par:00}).  Our main contribution is to explain how these ideas extend to optimization instances over probability measures.

\subsection{An SOS-type hierarchy adapted for probability measures} \label{sec: contribution}

Our first contribution is to describe a hierarchy of convex relaxations of increasing strength for the problem~\eqref{eq:general_quadform}.  First, observe that the objective in~\eqref{eq:general_quadform} can be expressed as follows:
\begin{displaymath}
   \int_{\cX^2} c(\bx_1,\bx_2)~d\pi(\bx_1)\pi(\bx_2) = \int_{\cX^2} c(\bx_1,\bx_2)~d(\pi\otimes \pi)(\bx_1,\bx_2).
\end{displaymath}
Since $\pi \in \cP(\cX)$ is a probability measure, its product $\pi\otimes \pi$ is also a probability measure.  By introducing a auxiliary decision variable $\bP \in \cP(\cX^2)$, and then subsequently enforcing the constraint $\bP = \pi \otimes \pi$, the problem~\eqref{eq:general_quadform} can be equivalently reformulated as follows:
\begin{equation}\label{eq:general_quadform_lifted}
\begin{aligned}
    \rho^\star
    ~~=~~
    \inf_{\bP\in\cP(\cX^2)} \quad
    & \int_{\cX^2}
        c(\bx_1,\bx_2)\,
        d\bP(\bx_1,\bx_2) \\
    \mathrm{s.t.} \quad
    & \int_{\cX^2}
        h_\gamma(\bx_1)\,
        d\bP(\bx_1,\bx_2)
        \leq b_\gamma,
        \qquad \forall\gamma\in\Gamma, \\
    & \bP=\pi \otimes \pi
        \quad\text{for some }\pi\in\cP(\cX).
\end{aligned}
\end{equation}
Although the objective and marginal constraints in
\eqref{eq:general_quadform_lifted} are now linear in $\bP$, the constraint $\bP = \pi \otimes \pi$ is {\em not} linear and is generally {\em not} convex in $\pi$.

Our basic strategy is to relax the product-measure constraint $\bP = \pi \otimes \pi$ by lifting to the product space.  Formally, let $r\in\mathbb{Z}_{>0}$ be a positive integer.  Consider the following product space:
\begin{equation*}
    \cX^{2r}
    :=
    \underbrace{
        \cX\times\cdots\times\cX
    }_{2r\text{ times}}.
\end{equation*}
We define relaxations in which the decision variables are probability measures over $\cX^{2r}$; i.e., they reside in $\cP(\cX^{2r})$.  These
construction are based on ideas arising from the
moment--SOS hierarchy for polynomial optimization, and we discuss these connections in Section~\ref{sec: connection to POP}.  The relaxations we specify increase in strength as one increases the parameter $r$, but at the expense
of increasing computational cost.

More concretely, let $S_{2r}$ denote the symmetric group on $[2r]$.  Let $\bP\in\cP(\cX^{2r})$ denote our decision variable.  We impose the following constraints in our problem:
\begin{enumerate}
    \item
    The measure $\bP$ is invariant under permutations of its block-coordinates; i.e., for every $\sigma\in S_{2r}$ and every
    $C_1,\ldots,C_{2r}\in\cB(\cX)$, one has
    \begin{equation}\label{eq:cond_sym}\tag{Sym}
        \bP(C_1\times\cdots\times C_{2r})
        =
        \bP(C_{\sigma(1)}\times\cdots\times C_{\sigma(2r)}).
    \end{equation}
    Here, $\cB(\cX)$ denotes the
    Borel $\sigma$-algebra on $\cX$.  In words, $\bP$ is a finitely
    exchangeable probability measure.

    \item We require $\bP_i :=(\mathrm{pr}_i)_{\#}\bP$, the marginal of $\bP$ onto a single $\cX$, to satisfy all linear constraints imposed on $\pi$.  This can also be stated as:
    \begin{equation}\label{eq:cond_mar}\tag{Mar}
        \int_{\cX^{2r}}
            h_\gamma(\bx_1)\,
            d\bP(\bx_1,\ldots,\bx_{2r})
        \leq b_\gamma,
        \qquad \forall\gamma\in\Gamma.
    \end{equation}
    Without loss of generality, we take $i=1$ in the above.  This is valid because of \eqref{eq:cond_sym}.

    \item The measure $\bP$ satisfies the following:
    For all $t\in[r]$, all bounded Borel-measurable functions
    \begin{equation*}
        f:\cX^{r-t}\longrightarrow\mathbb{R},
    \end{equation*}
    and all nonnegative bounded Borel-measurable functions
    \begin{equation*}
        g:\cX^{2t}\longrightarrow\mathbb{R}_+,
    \end{equation*}
    the following integral is non-negative \begin{equation}\label{eq:cond_psd}\tag{PSD}
    \begin{aligned}
        \int_{\cX^{2r}}
        & f(\bx_1,\ldots,\bx_{r-t})\,
          f(\bx_{r-t+1},\ldots,\bx_{2r-2t}) \\
        & {}\times
          g(\bx_{2r-2t+1},\ldots,\bx_{2r})\,
          d\bP(\bx_1,\ldots,\bx_{2r})
        \geq 0.
    \end{aligned}
    \end{equation}
    In the case where $t=0$, $g$ is to be understood as a nonnegative constant.  An alternative definition of \eqref{eq:cond_psd} is as follows:  Suppose one fixes $t$ and $g$.  Define the bilinear form:
    \begin{equation*}
    \begin{aligned}
        \mathsf{B}_{t,g}(f_1,f_2)
        :=
        \int_{\cX^{2r}}
            & f_1(\bx_1,\ldots,\bx_{r-t})\,
              f_2(\bx_{r-t+1},\ldots,\bx_{2r-2t}) \\
            & {}\times
              g(\bx_{2r-2t+1},\ldots,\bx_{2r})\,
              d\bP.
    \end{aligned}
    \end{equation*}
    Then \eqref{eq:cond_psd} is equivalent to requiring $\mathsf{B}_{t,g}$ be a positive semidefinite bilinear form for all $g:\cX^{2t}\rightarrow\mathbb{R}_+$.
\end{enumerate}

We define the $r$-th relaxation by
\begin{equation}\label{eq:general-r-th}
\begin{aligned}
    \rho^{(r)}
    :=
    \inf_{\bP\in\cP(\cX^{2r})} \quad
    & \int_{\cX^{2r}}
        c(\bx_1,\bx_2)\,
        d\bP(\bx_1,\ldots,\bx_{2r}) \\
    \mathrm{s.t.} \quad
    & \bP\text{ satisfies }
      \eqref{eq:cond_sym},
      \eqref{eq:cond_mar},
      \text{ and }\eqref{eq:cond_psd}.
\end{aligned}
\end{equation}

To see that~\eqref{eq:general-r-th} specifies a relaxation, let $\pi$
be feasible for~\eqref{eq:general_quadform} and set
\begin{equation*}
    \bP:=\pi^{\otimes 2r}.
\end{equation*}
One can easily check that \eqref{eq:cond_sym} and \eqref{eq:cond_mar} are satisfied.  The expression in~\eqref{eq:cond_psd} specialized to $\bP:=\pi^{\otimes 2r}$ factorizes as
\begin{equation*}
    \left(
        \int_{\cX^{r-t}}
            f\,d\pi^{\otimes r-t}
    \right)^2
    \left(
        \int_{\cX^{2t}}
            g\,d\pi^{\otimes2t}
    \right)
    \geq 0.
\end{equation*}
Hence \eqref{eq:cond_psd} is also satisfied.  Consequently, every feasible measure $\pi$ induces a feasible measure $\pi^{\otimes 2r}$ for~\eqref{eq:general-r-th}, from which we conclude
\begin{equation*}
    \rho^{(r)}\leq\rho^\star.
\end{equation*}
Furthermore, by marginalizing a feasible measure at level $(r+1)$ onto its first $2r$ coordinates we obtain a feasible measure at level $r$.  It follows that
\begin{equation*}
    \rho^{(r)}
    \leq
    \rho^{(r+1)}
    \leq
    \rho^\star,
    \qquad r\in\mathbb{Z}_{>0}.
\end{equation*}
Thus, the hierarchy provides a monotone sequence of lower bounds on
the optimal value of~\eqref{eq:general_quadform}.

% {\bf Convexity.}  The optimization instance \eqref{eq:general-r-th} can be understood as a convex program over the space of probability measures in the following sense:  Given two probability measures $\mu_0$, $\mu_1$, one can define a convex combination of these measures $\mu_{\theta} = (1-\theta) \mu_0 + \theta \mu_1$ by
% \begin{equation*}
%     \mu_{\theta} (C) = (1-\theta) [\mu_0] (C)+ \theta [\mu_1] (C) \quad \forall \theta \in [0,1], \forall \text{ measurable } C.
% \end{equation*}
% Under this interpretation, the constraint set in \eqref{eq:general-r-th} can be viewed as a convex subset of $\mathcal{P}(\cX^{2r})$.  Moreover, because the objective is linear in $\bP$, the optimization instance \eqref{eq:general-r-th} can be understood as a convex program.  In particular, the purpose of stating \eqref{eq:general-r-th} is that it should be viewed as suitable notion of a convex relaxation of \eqref{eq:general_quadform}.

{\bf Convexity.}  The formulation \eqref{eq:general-r-th} specifies a convex relaxation in the following sense.  Given a pair of measures $\bP_0,\bP_1\in\cP(\cX^{2r})$, and any $\theta \in [0,1]$, we may define a convex combination of these measures
\begin{equation*}
    \bP_\theta
    :=
    (1-\theta)\bP_0+\theta\bP_1
\end{equation*}
as the probability measure satisfying
\begin{equation*}
    \bP_\theta(C)
    =
    (1-\theta)\bP_0(C)
    +
    \theta\bP_1(C),
    \qquad
    \forall C\in\cB(\cX^{2r}).
\end{equation*}
By linearity, 
\eqref{eq:cond_sym} and
\eqref{eq:cond_mar} are preserved under convex combinations.   Suppose that $\bP_0$ and $\bP_1$ are measures that satisfy~\eqref{eq:cond_psd}.  Then, for every admissible choice of $t$, $f$, and $g$, the left-hand side of~\eqref{eq:cond_psd} can be written as
\begin{equation*}
\begin{aligned}
    &\int_{\cX^{2r}}
        f(\bx_1,\ldots,\bx_{r-t})
        f(\bx_{r-t+1},\ldots,\bx_{2r-2t})
        g(\bx_{2r-2t+1},\ldots,\bx_{2r})
        \,d\bP_\theta \\
    &\qquad =
        (1-\theta)
        \int_{\cX^{2r}} f f g\,d\bP_0
        +
        \theta
        \int_{\cX^{2r}} f f g\,d\bP_1
        \geq 0.
\end{aligned}
\end{equation*}
Combining these, we conclude that the feasible region of~\eqref{eq:general-r-th} is a convex subset of $\cP(\cX^{2r})$.  Finally, we note that the objective is linear in $\bP$ since
\begin{equation*}
    \int_{\cX^{2r}}c(\bx_1,\bx_2)\,d\bP_\theta
    =
    (1-\theta)
    \int_{\cX^{2r}}c(\bx_1,\bx_2)\,d\bP_0 + \theta
    \int_{\cX^{2r}}c(\bx_1,\bx_2)\,d\bP_1.
\end{equation*}
Thus problem~\eqref{eq:general-r-th} can be understood as a convex program over probability measures. %This convexity, together with the inclusion of every product measure $\pi^{\otimes 2r}$ induced by a feasible solution of \eqref{eq:general_quadform}, justifies interpreting \eqref{eq:general-r-th} as a convex relaxation of \eqref{eq:general_quadform}.

{\bf Certifying global optimality.}
As shown later in Proposition~\ref{thm:simpleLB}, the optimal value
$\rho^{(r)}$ of the $r$-th relaxation provides a lower bound on
$\rho^\star$. Let $\widetilde{\bP}\in\cP(\cX^{2r})$ be feasible for
\eqref{eq:general-r-th}, and let
\begin{equation*}
    \widetilde{\pi}
    :=\widetilde{\bP}\big|_{\cX}
\end{equation*}
denote its first marginal. By the symmetry condition
\eqref{eq:cond_sym}, this definition is independent of the coordinate
chosen. Moreover, the marginal constraints~\eqref{eq:cond_mar} ensure
that $\widetilde{\pi}$ is feasible for~\eqref{eq:general_quadform}.  Let
\begin{equation*}
    \operatorname{OBJ}(\widetilde{\pi})
    :=
    \int_{\cX^2}
        c(\bx_1,\bx_2)\,
        d\widetilde{\pi}(\bx_1)\,
        d\widetilde{\pi}(\bx_2)
\end{equation*}
denote the objective value of $\widetilde{\pi}$ in
\eqref{eq:general_quadform}. We then obtain the following inequalities:
\begin{equation}\label{eq:optimality_certificate}
    \rho^{(r)}
    \leq
    \rho^\star
    \leq
    \operatorname{OBJ}(\widetilde{\pi}).
\end{equation}
Consequently, if for some $r$ we obtain
\begin{equation*}
    \rho^{(r)}
    =
    \operatorname{OBJ}(\widetilde{\pi}),
\end{equation*}
then both inequalities in~\eqref{eq:optimality_certificate} are equalities.  It follows that
\begin{equation*}
    \rho^{(r)}=\rho^\star
\end{equation*}
and that $\widetilde{\pi}$ is a globally optimal solution of
\eqref{eq:general_quadform}.  In particular, in addition to being an {\em a posteriori} certificate of global optimality, the quantity
\begin{equation*}
    \operatorname{OBJ}(\widetilde{\pi})-\rho^{(r)}
\end{equation*}
provides an upper bound on the optimality gap.  

Regardless of whether these equalities hold, the marginal $\widetilde{\pi}$ (obtained after computing $\widetilde{\bP}$) may be applied as a candidate solution of the original problem \eqref{eq:general_quadform}.  Following earlier arguments, $\widetilde{\pi}$ would in fact be optimal for \eqref{eq:general_quadform} if the following held
\begin{equation*}
    \widetilde{\bP} = \widetilde{\pi}^{\otimes 2r}.
\end{equation*}
%Many of the ideas we state are standard; see, for instance~\cite{burer:2024} and references therein.

\if0
A sufficient condition for the resulting certificate to be
exact is that the lifted optimizer has the product form
\begin{equation*}
    \widetilde{\bP}
    =
    \widetilde{\pi}^{\otimes 2r}
    \qquad
    \text{for some feasible }
    \widetilde{\pi}\in\cP(\cX).
\end{equation*}
Indeed, in this case,
\begin{equation*}
\begin{aligned}
    \rho^{(r)}
    &=
    \int_{\cX^{2r}}
        c(\bx_1,\bx_2)\,
        d\widetilde{\pi}^{\otimes 2r}
            (\bx_1,\ldots,\bx_{2r}) \\
    &=
    \int_{\cX^2}
        c(\bx_1,\bx_2)\,
        d\widetilde{\pi}(\bx_1)\,
        d\widetilde{\pi}(\bx_2)
    =
    \operatorname{OBJ}(\widetilde{\pi}).
\end{aligned}
\end{equation*}
\fi

\subsection{Optimization over coupling measures}

One of our main goals is to characterize the settings under which $\rho^{(r)} \rightarrow \rho^\star$ as $r \rightarrow \infty$.  At this juncture, we only have partial understanding of when this happens.  A large part of the difficulty is that the problem formulation as it is stated in~\eqref{eq:general_quadform} describes a very broad and permissive set of constraints.  

In the remainder of this paper, we will pursue a less ambitious agenda.  To this end, we consider a specific family of constraints that are grounded by applications in optimal transport.  Our second contribution is to establish convergence of $\rho^{(r)}$ towards $\rho^\star$ under these conditions.  

More formally, let $m \geq 2$, and let $\{ \cX_{i} \}_{i=1}^{m}$, $\cX_{i} \subset \mathbb{R}^{n_i}$, be a collection of Borel subsets.  For each $i \in [m]$, let $\mu_i \in \mathcal{P}(\cX_{i})$ be a probability measure supported over the respective spaces $\cX_{i}$.  We take $\cX$ in our problem formulation to be the product space
\begin{displaymath}
    \cX := \cX_1 \times \cdots \times \cX_m \in \RR^n,\quad n:= n_1+\ldots+n_m.
\end{displaymath}
For each $i\in[m]$, let
\begin{displaymath}
    \mathrm{pr}_i:\cX\longrightarrow\cX_i
\end{displaymath}
denote the canonical projection onto the $i$-th coordinate. We define the set of couplings of $\mu_1,\ldots,\mu_m$ by
\begin{equation}\label{eq:coupling_defn}
    \Pi(\mu_1,\ldots,\mu_m)
    :=
    \left\{
        \pi\in\cP(\cX):
        \pi\big|_{\cX_i}=\mu_i
        \quad \forall i\in[m]
    \right\}.
\end{equation}
In the standard definition, $(\mathrm{pr}_i)_{\#}\pi$ denotes the push-forward of $\pi$ under $\mathrm{pr}_i$.  For simplicity, we simplify the notation of push-forward measure onto the marginal spaces by $\pi\big|_{\cX_i} = (\mathrm{pr}_i)_{\#}\pi$.  Note that the marginal constraints in~\eqref{eq:coupling_defn} can be equivalently expressed within the formulation of \eqref{eq:general_quadform} via:
\begin{displaymath}
    \int_{\cX}
        \varphi_i\bigl(\mathrm{pr}_i(\bx)\bigr)\,d\pi(\bx)
    =
    \int_{\cX_i}
        \varphi_i(\bx_i)\,d\mu_i(\bx_i)
\end{displaymath}
for every $i\in[m]$ and every bounded Borel-measurable function
$\varphi_i:\cX_i\to\mathbb{R}$. The elements of
$\Pi(\mu_1,\ldots,\mu_m)$ are called \emph{couplings}, or
\emph{transport plans}, of the prescribed marginals
$\mu_1,\ldots,\mu_m$. Notice that this set is nonempty, since
\begin{displaymath}
    \mu_1\otimes\cdots\otimes\mu_m
    \in\Pi(\mu_1,\ldots,\mu_m).
\end{displaymath}
In the case where $m=2$, the definition \eqref{eq:coupling_defn} recovers the classical definition of a coupling as it appears in the Kantorovich formulation as it is understood in optimal transport \cite{San:15,Vil:03,Vil:08}.  The definition in \eqref{eq:coupling_defn} extends more generally and is applicable to, for instance, multi-marginal extensions of the classical optimal transport problem~\cite{Pass:15}.

%Let $\{ (\cX_{i},d_i) \}_{i=1}^{m}$ be a collection of metric spaces.  For each of these, let $\mu_i$ be a probability measure over the respective metric spaces $(\cX_{i},d_i)$.  Let $\CX = \CX_1 \times \dots \times \CX_m$ denote the product space.  
%Given $\CX = \CX_1 \times \dots \times \CX_m$, and each metric space $(\CX_i,d_i)$ is equipped with a probability measure $\mu_i$. Let $n_i$ be the dimension of of $\CX_i$, and define the set of coupling measure 

In the remainder of this paper, we consider the following class of optimization instances: 
\begin{equation} \label{eq:ot_quadform}
    \ropt ~:=~\; \inf \int_{\cX \times \cX} c(\bx_1,\bx_2) \ d(\pi \otimes \pi)(\bx_1,\bx_2) \quad \mbox{s.t.} \quad \pi \in \Pi(\mu_1,\ldots,\mu_m).
\end{equation}
Here, $\bx_1 = (\bx_{11},\dots,\bx_{1m}), \bx_2 = (\bx_{21},\dots,\bx_{2m}) \in \cX_1 \times \dots \cX_m$.

\subsection{The definition of ``solve''}

To state our third contribution, it is necessary to clarify the notion in which one ``solves'' an optimization instance such as \eqref{eq:general_quadform}.  First, one typically makes a distinction between computing the optimal {\em value} to a problem, as compared to as output an optimal {\em solution}.  Suppose we consider the latter situation.  Then there is an additional subtlety in that probability measures are typically infinite dimensional, and it unclear how a generic optimal solution is described.

To this end, there are broadly two distinct paradigms we are aware of.  The first of these is based on the observation that -- under suitable assumptions -- a probability measure is uniquely characterized by its full sequence of moments.  Although the complete sequence is infinite, it is countable and can be approximated by a finite sequence obtained via a truncation at some appropriate degree.  We call methods that solve for optimization instances such as \eqref{eq:general_quadform} by parameterizing the problem in terms of sequences of moments the \emph{moment-sequence paradigm}. 

The second of this is based on the observation that a probability measure can be approximated by a finite atomic measure
\begin{displaymath}
    \pi_N
    :=
    \sum_{i=1}^{N} w_i\delta_{\bx_i},
    \qquad
    w_i\geq0,
    \qquad
    \sum_{i=1}^{N}w_i=1.
\end{displaymath}
Given a generic measure, a suitable approximation may be obtained either by discretizing the underlying spaces or by drawing samples from the individual measures. The relevant notion of closeness is typically weak convergence.  We call methods based on these ideas the \emph{empirical-measure paradigm}.

One of our reasons for stating formulation~\eqref{eq:general-r-th} is that it retains flexibility to accommodate both paradigms.  Nevertheless, there may be settings in which one favors solutions based on the moment--sequence paradigm.  (As a concrete example, if one wishes to compute the optimal value independently of the errors associated with approximating a continuous measure with an empirical measure, then moment-sequence-based methods are certainly preferable.)  To this end, our third contribution is to develop a framework for solving \eqref{eq:ot_quadform} via the moment-sequence paradigm building on the ideas of the moment-SOS hierarchy.  A key technical challenge is to translate the requirement \eqref{eq:cond_sym} to a form that is amenable to a SOS representation.  We describe these ideas in Section~\ref{sec: moment_hierarchy}.

\subsection{Examples} \label{sec:examples}

We discuss examples of problems that fit into the framework \eqref{eq:ot_quadform} described.

{\bf Optimal Transport.}  The simplest example of a problem that fits into \eqref{eq:ot_quadform} is the Kantorovich formulation of the classical optimal transport:  Let $m=2$, and let
\begin{displaymath}
    \cX=\cY\times\cZ,
    \qquad
    \cY=\cX_1,
    \qquad
    \cZ=\cX_2.
\end{displaymath}
Let $\mu_{\cY}\in\cP(\cY)$ and $\mu_{\cZ}\in\cP(\cZ)$ be probability measures over $\cY$ and $\cZ$ respectively.  Then $\Pi(\mu_\cY,\mu_\cZ)$ is the set of all feasible couplings satisfying the marginal constraints.  The Kantorovich formulation of the OT problem seeks the optimal coupling that minimizes the cost of moving mass from one distribution to another so as to minimize cost, and this can be expressed as the following
\begin{equation*} \label{eq:kantorovich_ot}
    \min_{\pi} ~~  \int_{\cY \times \cZ} c(\by,\bz) \, d\pi(\by,\bz)  \quad
    \mathrm{s.t.} \quad \pi \in \Pi(\mu_\cY,\mu_\cZ).
\end{equation*}
Here, $c(\by,\bz)$ denotes the cost of moving a unit mass from location $\by \in \mathcal{Y}$ to location $\bz \in \mathcal{Z}$.  Note in this particular instance that the objective has {\em linear} dependency, rather than {\em quadratic}, on the decision variable $\pi$.

% {\bf Gromov-Wasserstein.}  The Gromov-Wasserstein problem is an extension of the classical Optimal Transport problem to problem settings where a cost function ascribing the price of moving mass ($c$, as in the above) from one location to another may be absent \cite{Mem:07,Mem:11,Sturm:06}.  To circumvent this, the Gromov-Wasserstein problem supposes that the source and target spaces may be modeled as metric spaces, and on which on seeks couplings that minimize the {\em distortion} of the metric spaces.  

% Formally, a metric measure (mm) space $(\cX,d_{\cX},\mu_{\cX})$ is a triple where $(\cX,d_{\cX})$ specifies a metric space, and where $\mu_{\cX}$ denotes a measure over $\cX$.  Suppose $(\cY,d_{\cY},\mu_{\cY})$ and $(\cZ,d_{\cZ},\mu_{\cZ})$ are a pair of mm-spaces.  The Gromov-Wasserstein problem seeks the minimal solution to the following:
% \begin{equation*} \label{eq:gw}
%     \min_{p} ~~ \int_{(\mathcal{Y} \times \mathcal{Z})^2} c(\by_0,\bz_0,\by_1,\bz_1) \, d\pi(\by_0,\bz_0) \, d\pi(\by_1,\bz_1) \quad 
%     \mathrm{s.t.} \quad \pi \in \Pi.
% \end{equation*}
% Here, the cost $c(\by_0,\bz_0,\by_1,\bz_1)$ is often modelled as the discrepancy between the metric evaluations between pairs of points in $\mathcal{Y}$ and in $\mathcal{Z}$
% \begin{equation*}
%     c(\by_0,\bz_0,\by_1,\bz_1) := \mathrm{Loss} (d_{\mathcal{Y}} (\by_0,\by_1) , d_{\mathcal{Z}} (\bz_0,\bz_1) ).
% \end{equation*}
% As seen above, it can be modeled as an instance of
% \eqref{eq:ot_quadform}.

{\bf Gromov--Wasserstein.}
The Gromov--Wasserstein problem extends classical optimal transport to settings in which a cost function that directly compares points across the source and target spaces may be unavailable. Instead, it compares the intrinsic geometric structures of the two spaces and seeks a coupling that minimizes their metric distortion; see, for instance,~\cite{Mem:07,Mem:11,Sturm:06}.

Formally, a metric measure space, abbreviated as an mm-space, is a triple
\begin{displaymath}
    (\cX,d_{\cX},\mu_{\cX}),
\end{displaymath}
where $(\cX,d_{\cX})$ is a metric space and $\mu_{\cX}$ is a Borel probability measure on $\cX$. Consider two mm-spaces
\begin{displaymath}
    (\cY,d_{\cY},\mu_{\cY})
    \qquad\text{and}\qquad
    (\cZ,d_{\cZ},\mu_{\cZ}).
\end{displaymath}
Given a measurable loss function
\begin{displaymath}
    \mathrm{Loss}:\mathbb{R}_{+}\times\mathbb{R}_{+}
    \longrightarrow\mathbb{R}_{+},
\end{displaymath}
the associated Gromov--Wasserstein problem is
\begin{equation}\label{eq:gw}
    \inf_{\pi\in\Pi(\mu_{\cY},\mu_{\cZ})}
    \int_{(\cY\times\cZ)^2}
        c(\by_0,\bz_0,\by_1,\bz_1)\,
        d\pi(\by_0,\bz_0)\,
        d\pi(\by_1,\bz_1),
\end{equation}
where
\begin{displaymath}
    c(\by_0,\bz_0,\by_1,\bz_1)
    :=
    \mathrm{Loss}
    \left(
        d_{\cY}(\by_0,\by_1),
        d_{\cZ}(\bz_0,\bz_1)
    \right).
\end{displaymath}
Thus, rather than directly comparing a point $\by\in\cY$ with a point $\bz\in\cZ$, the Gromov--Wasserstein objective compares the distances between pairs of points within the respective spaces. A common choice is
\begin{displaymath}
    \mathrm{Loss}(a,b)=|a-b|^p,
    \qquad p\geq 1,
\end{displaymath}
when the corresponding $p$-Gromov--Wasserstein distance is obtained by taking the $p$-th root of the optimal value in~\eqref{eq:gw}, up to a possible normalization factor depending on the convention adopted.
In particular, the quadratic-loss formulation corresponds to
\begin{displaymath}
    c(\by_0,\bz_0,\by_1,\bz_1)
    =
    \left|
        d_{\cY}(\by_0,\by_1)
        -
        d_{\cZ}(\bz_0,\bz_1)
    \right|^2.
\end{displaymath}

Since the coupling $\pi$ appears twice in the integral, problem~\eqref{eq:gw} depends quadratically on its decision variable. Consequently, the Gromov--Wasserstein problem is a direct instance of~\eqref{eq:ot_quadform}, with
\begin{displaymath}
    \cX=\cY\times\cZ
    \qquad\text{and}\qquad
    \Pi(\mu_{\cY},\mu_{\cZ}).
\end{displaymath}
Unlike the Kantorovich problem~\eqref{eq:kantorovich_ot}, this quadratic dependence generally makes the Gromov--Wasserstein problem nonconvex.

\subsection{Related work}

We briefly discuss relevant prior work.

{\bf Quadratic optimization over probability simplex and SOS-lifts.}  Suppose that $\cX$ is supported on a finite collection of points.  For the sake of this discussion, we ignore the presence of the affine constraints (i.e., suppose $\Gamma = \emptyset$).  Then \eqref{eq:general_quadform} can be expressed as the following finite dimensional quadratic program
\begin{equation} \label{eq:quadopt_simplex}
    \min_{\bx \in \mathbb{R}^{n} } \quad \bx^{\top} C \bx \qquad \mathrm{s.t.} \qquad 
    \bx \in \Delta^{n-1}.
\end{equation}
The problem of optimizing quadratic forms over the probability simplex is a prominent problem -- an excellent reference on this topic can be found here \cite{BdK:02}.  The class of problems described in~\eqref{eq:quadopt_simplex} is NP--hard as it contains the maximum stable set problem (see e.g.,~\cite{Motzkin:65}).  Nevertheless, because~\eqref{eq:quadopt_simplex} can be modeled as a polynomial optimization instance, the ideas of the Sum-of-Squares hierarchy are applicable and may be used to derive a series of semidefinite relaxations that increase in strength and whose solutions increasingly approximate that of \eqref{eq:quadopt_simplex} (the associated convergence rates are studied in ~\cite{Nie:2014,slot:2022,tran:2025convergence,tran:2026convergence}). In a sense, our objective is to generalize these ideas to probability measures, though it is not entirely apparent how this should be done.

We should emphasize, there is freedom towards how one models \eqref{eq:quadopt_simplex} as a polynomial optimization instance.  Because $\bx \geq 0$, an alternative parameterization is a lift of these variables as squares $x_i = y_i^2$, in which case the optimization instance can be modeled as one over the unit-sphere:
\begin{equation} \label{eq:quadopt_unitsphere}
    \min_{\by \in \mathbb{R}^{n}} \quad (\by \odot \by)^{\top} C (\by \odot \by) \qquad \mathrm{s.t.} \qquad \by \in S^{n-1}.
\end{equation}
Subsequently, one approach to solving \eqref{eq:quadopt_unitsphere} is to seek SOS representations of the polynomial $(\sum_{ij} C_{ij} y_i y_j) (\sum_i y_i^2)^r$~\cite{Nie:2013Approximation,slot:2022}.  It is not apparent, however, if the ideas of lifting $\bx \mapsto \by \odot \by$ as a means of solving \eqref{eq:quadopt_simplex} generalizes to our set-up when the decision variables are probability measures.

{\bf Gromov-Wasserstein.}  The Gromov-Wasserstein (GW) problem is perhaps the most prominent instance of~\eqref{eq:ot_quadform} we are aware of.  Recently, there has been work towards developing semidefinite relaxations for the discretized instance of the GW problem ~\cite{chen:2023semidefinite,tran:2026sum,Vil:2016}.  These ideas were subsequently extended to a full SOS hierarchy in~\cite{tran:2026sum}.  These works focus on discrete measures; in contrast, our work focuses on interpreting these ideas to general measures. 

A different but related work is that of~\cite{MN:24}, which applies the moment--SOS hierarchy to a range of problems arising in optimal transport, of which one such example is the GW problem.  Under the assumption that the cost $c$ is polynomial, the work in~\cite{MN:24} proposes a framework for solving GW instances via the standard moment--SOS hierarchy.  To be clear, the GW objective has quadratic dependency on the measure $\pi$, and as such is ordinarily out of reach of the usual moment--SOS hierarchy.  To this end, the authors in~\cite{MN:24} propose an alternating minimization framework in which one of these factors is treated as constant while one minimizes over the other.  Because the manner in which the hierarchy is applied is atypical, the usual convergence guarantees associated with the moment--SOS do not apply; the work in~\cite{MN:24} does not provide alternative guarantees tailored their method as well.

In contrast to~\cite{MN:24}, our approach to dealing with the quadratic dependency on $\pi$ is to apply a {\em lift}.  A key advantage is that this allows us to state a moment--SOS type hierarchy with accompanying convergence guarantees, which is ordinarily not possible via methods based on alternating minimization (see Section \ref{sec: moment_hierarchy}).

{\bf de Finetti theorems.}  An infinite sequence of random variables $\{ X_1,X_2,\ldots \}$ is said to be {\em exchangeable} if, for any $r$ and any permutation $\sigma : [r] \rightarrow [r]$, the tuples $(X_1,\ldots,X_r)$ and $(X_{\sigma(1)},\ldots,X_{\sigma(r)})$ have the same distribution.   The simplest form of the de Finetti theorem (see e.g.,~\cite{deFinetti:37,HewittSavage:55}) states that any infinite sequence of exchangeable random variables taking values in $\{0,1\}$ can be expressed as the mixture of i.i.d. Bernoulli random variables.

On the surface, the de Finetti theorem seems to bear resemblance to our set-up; in particular, the notion of exchangeability appears superficially related to the symmetric property \eqref{eq:cond_sym}.  However, this is also where the similarity ends.  In our set-up, we perform optimization with the goal of recovering a probability measure $\bP = \pi \otimes \pi$ that is (almost surely) the product of a single measure $\pi$.  The de Finetti theorems state that exchangeable random variables are mixture of i.i.d. random variables, which is not relevant to our set-up.  In particular, it is not apparent if the conclusions of the de Finetti theorems have any relevance to our setting.

\subsection*{Notation}
We use $\bx=(x_1,\dots,x_n)$ to denote a vector of variables and $\RR[\bx]$ as the ring of polynomials in $\bx$. Let $\alpha =(\alpha_1,\dots,\alpha_n)$ be a multi-index with length $|\alpha| = \sum_{i=1}^n \alpha_i$. The set of multi-index of length at most $r$ is denoted by $\NN^n_r = \{\alpha \in \NN^n: \ |\alpha| \leq r\}$. We let $\overline{\NN}^n_{r}$ to be the subset of $\NN^n_r$ whose elements have length exactly $r$. The monomials in $\bx$ are written in the form $\bx^{\alpha}= x_1^{\alpha_1}\cdots x_n^{\alpha_n}$. For any polynomial $f(\bx) = \sum_{\alpha}f_{\alpha}\bx^{\alpha} \in \RR[\bx]$, we define the norm $\|f\| = \sqrt{\sum_{\alpha}f_{\alpha}^2}$, and  $\lceil f \rceil := \lceil (\mbox{deg}\ f) /2 \rceil$.

For a fixed measure $\mu$ and a postive number $p$, we denote the $L^p(\mu)$-- norm by $\|\cdot\|_{L^p(\mu)}$, and the set of $L^p(\mu)$--measurable functions by $L^p(\mu)$.

\section{Background on the (moment)-SOS hierarchy} \label{sec:background}

In this section, we provide relevant background on the (moment)-SOS hierarchy necessary to describe our ideas.  A further development of these ideas can be found, for instance, in \cite{Las:09}.  

Broadly speaking, the (moment)-SOS hierarchy is a principled approach for constructing a hierarchy of semidefinite relaxations of increasing strength for solving polynomial optimization instances~\cite{Las:01,Las:09,Par:00}.  More formally, consider the following polynomial optimization instance in which we minimize a polynomial $f \in \RR[\bx]$ over a semialgebraic set:
\begin{equation}\label{POP}
    \min \quad \{ f(\bx) :  \bx \in \RR^n, \  
    g_j(\bx) \geq 0 \ \forall j \in [m], \
    h_i(\bx) = 0 \ \forall i\in [p] \}.
\end{equation}
Here, $f, g_j, h_i \in \RR[\bx]$ are polynomials.  In the following, we will let $\cS$ denote the constraint set:
\begin{equation} \label{eq:domain_S}
    \cS := \{ \bx \in \RR^n: \ 
    g_j(\bx) \geq 0 \ \forall j \in [m],\;
    h_i(\bx) = 0 \ \forall i\in [p] \}.
\end{equation}

\subsection{SOS hierarchy}

The SOS hierarchy starts by re-formulating~\eqref{POP} as the following maximization instance over the cone of non-negative polynomials:
\begin{equation*}
    f^* \;:=\; \max \;
    \{\lambda \in \RR \; : \; f(\bx)-\lambda \in \cP_+(\cS) \}.
\end{equation*}
Here, $\cP_+(\cS)$ denotes the cone of nonnegative polynomials on $\cS$.  Unfortunately, checking membership in $\cP_+(\cS)$ is NP-hard. % as it contains combinatorial optimization instances such as the MAX-CUT problem (see, e.g.,~\cite{Mich:79}).  
To circumvent these difficulties, the SOS hierarchy constructs a sequence of cones via SOS polynomials that form inner approximations of $\cP_+(\cS)$, and whose membership in these cones can be certified via semidefinite programming (SDPs) instances.

%Depending on the defining polynomials of the underlying set $\cS$, either the preordering or the quadratic module associated with $\cS$ is used to approximate $\cP_+(\cS)$. 

More formally, the {\em preordering} $\cT(\cS)$ and the {\em quadratic module} $\cQ(\cS)$ associated with the semialgebraic set $\cS$ are the cones defined as follows:
\begin{equation*}
\begin{aligned}
    \cT(\cS)
    =&
    \bigg \{
     \sum_{J\subseteq[m]}\sigma_J(\bx)g_J(\bx)
    + \sum_{i=1}^p\tau_i(\bx)h_i(\bx):
    \sigma_J\in
    \Sigma[\bx]
    \ \forall J\subseteq[m],
    \tau_i\in
    \RR[\bx]
    \ \forall i\in[p]
    \bigg \}, \\
    \cQ(\cS)
    =&
    \bigg \{
     \sigma_0(\bx)
    +
    \sum_{j=1}^m\sigma_j(\bx)g_j(\bx)
    +
    \sum_{i=1}^p\tau_i(\bx)h_i(\bx): 
    \sigma_0,\sigma_j\in
    \Sigma[\bx]
    \ \forall j\in[m],
    \tau_i\in
    \RR[\bx]
    \ \forall i\in[p]
    \bigg \}.
\end{aligned}
\end{equation*}
Here, we use the notation $g_J := \prod_{j\in J}g_j$ whenever $J \subseteq [m]$, with the convention that $g_{\emptyset} := 1$.  Generally speaking, the preordering and the quadratic module specify independent SOS hierarchies, each of which has its own set of merits and limitations.  We discuss both of these in parallel.

The cones $\cT(\cS)$ and $\cQ(\cS)$ are infinite dimensional.  In practical computations, it is necessary to work with finite dimensional analogues of these.  Subsequently, the {\em truncated preordering} $\cT(\cS)_{2r}$ and the {\em truncated quadratic module} $\cQ(\cS)_{2r}$ associated with the set $\cS$ are simply $\cT(\cS)$ and $\cQ(\cS)$ specified an additional degree bound on the monomials:
\begin{eqnarray*}
    \cT(\cS)_{2r}
    &=&
    \bigg \{
    %q(\bx)
    %=
    \sum_{J\subseteq[m]}\sigma_J(\bx)g_J(\bx)
    +
    \sum_{i=1}^p\tau_i(\bx)h_i(\bx):
    \\
    &&
    \quad
    \sigma_J\in
    \Sigma[\bx]_{2(r-\lceil g_J\rceil)}
    \ \forall J\subseteq[m],\quad
    \tau_i\in
    \RR[\bx]_{2(r-\lceil h_i\rceil)}
    \ \forall i\in[p]
    \bigg \},
    \nonumber\\
    \cQ(\cS)_{2r}
    &=&
    \bigg \{
    %q(\bx)
    %=
    \sigma_0(\bx)
    +
    \sum_{j=1}^m\sigma_j(\bx)g_j(\bx)
    +
    \sum_{i=1}^p\tau_i(\bx)h_i(\bx):
    \\
    &&
    \quad
    \sigma_0\in\Sigma[\bx]_{2r},\quad
    \sigma_j\in
    \Sigma[\bx]_{2(r-\lceil g_j\rceil)}
    \ \forall j\in[m],\quad
    \tau_i\in
    \RR[\bx]_{2(r-\lceil h_i\rceil)}
    \ \forall i\in[p]
    \bigg \}.
\end{eqnarray*}
Here, we impose $\degg{g_{J}} \leq r$ for all $J\subseteq[m]$, $\degg{g_j}\leq r$ for all $j\in[m]$, and $\degg{h_i}\leq r$ for all $i\in[p]$. 

It is clear from these definitions that one has the following hierarchy of inclusions:
\begin{eqnarray*}
    \cT(\cS)_{2}
    \subseteq
    \cT(\cS)_{4}
    \subseteq
    \cdots
    \subseteq
    \cT(\cS)
    \subseteq
    \cP_+(\cS),\\
    \cQ(\cS)_{2}
    \subseteq
    \cQ(\cS)_{4}
    \subseteq
    \cdots
    \subseteq
    \cQ(\cS)
    \subseteq
    \cP_+(\cS).
\end{eqnarray*}
In particular, these cones specify a hierarchy of inner approximations of $\cP_+(\cS)$.  Subsequently, we may define a hierarchy of lower bounds for $f^*$ via the following:
\begin{equation} \label{eq:SOS}
\begin{aligned}
    f^{(r)}_{\rm SOS, PO}
    & \;:=\;
    \sup \;
    \big\{\lambda :
    f(\bx)-\lambda\in\cT(\cS)_{2r}\big\}, \\
    f^{(r)}_{\rm SOS, QM}
    &  \;:=\;
    \sup \;
    \big\{\lambda :
    f(\bx)-\lambda \in \cQ(\cS)_{2r} \big\}.
\end{aligned}
\end{equation}

Notice that $f(\bx) - \lambda > 0$ over $\cS$ whenever $\lambda < f^*$.  One may in principle conclude that the hierarchy $\{f^{(r})\}_{r=1}^{\infty}$ converges to $f^*$ if one is further able to guarantee that a polynomial positive over $\cS$ is eventually inside $\cT(\cS)_{2r}$ and $\cQ(\cS)_{2r}$, for $r$ sufficiently large.  Results of such a nature do in fact exist, and are known as Positivstellens\"atze in the literature:
\begin{theorem}[Schm\"udgen Positivstellensatz] (\cite[pp.~283--313]{Schmud:2017})
    \label{thm: Schmüdgen}
    Suppose $\cS$ as specified in \eqref{eq:domain_S} is compact.  If $f$ is strictly
    positive on $\cS$, then $f \in \cT(\cS)$.
\end{theorem}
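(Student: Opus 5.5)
The plan follows the classical operator-theoretic route to Schmüdgen's theorem: reduce the problem to a moment problem for linear functionals that are nonnegative on $\cT(\cS)$, and then use compactness of $\cS$ to turn a separation argument into a contradiction.

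The first step is purely algebraic. We show that the preordering is Archimedean, meaning that $N - \|\bx\|^2 \in \cT(\cS)$ for some $N>0$. This is the deep content of Schmüdgen's theorem and the main obstacle. Compactness of $\cS$ is a semialgebraic property, so the natural tool is the Positivstellensatz of Krivine--Stengle. Since $N-\|\bx\|^2>0$ on $\cS$ for $N$ large, it yields an identity
\begin{equation*}
p\,(N-\|\bx\|^2) = 1 + q
\end{equation*}
with $p,q\in\cT(\cS)$. The delicate part is then to remove the multiplier $p$. For this we would follow Schmüdgen's or Wörmann's argument. One first shows that the ring $H$ of bounded elements, namely those $f$ with $N\pm f\in\cT(\cS)$ for some $N$, is a subring. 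One then shows, using the identity above together with iterated squaring tricks, that each coordinate $x_i$ lies in $H$. Summing the resulting bounds on $x_i^2$ gives the Archimedean property.

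With the Archimedean property established, we argue by contradiction. Suppose $f>0$ on $\cS$ but $f\notin\cT(\cS)$. Archimedeanity makes $1$ an algebraic interior point of the convex cone $\cT(\cS)$ inside $\RR[\bx]$. The Eidelheit--Kakutani separation theorem then gives a nonzero linear functional $L$ with $L\geq 0$ on $\cT(\cS)$, $L(f)\le 0$, and $L(1)=1$.

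Next we represent $L$ by a measure. We build the GNS Hilbert space from $\langle p,q\rangle := L(pq)$, on which multiplication by $x_i$ defines commuting symmetric operators. Archimedeanity gives $L(p^2(N-x_i^2))\ge 0$, so these operators are bounded with norm at most $\sqrt N$. Their closures are commuting bounded self-adjoint operators, and the spectral theorem produces a representing probability measure $\mu$ with $L(p)=\int p\,d\mu$ for all $p$.

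It remains to locate the support of $\mu$. Suppose some point of $\operatorname{supp}\mu$ has $g_j<0$. Then $\int g_j\, s^2\,d\mu<0$ for a suitable polynomial $s$ approximating an indicator, using Weierstrass approximation on the compact support. But $L(g_j s^2)\ge 0$ because $g_j s^2\in\cT(\cS)$, a contradiction; the equality constraints $h_i=0$ are handled the same way. Hence $\operatorname{supp}\mu\subseteq\cS$. Since $f>0$ on $\cS$, we get $L(f)=\int f\,d\mu>0$, contradicting $L(f)\le 0$.
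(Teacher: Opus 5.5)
The paper does not prove this theorem; it only cites Schm\"udgen's book. Your architecture is the standard route to this theorem: Krivine--Stengle to get an Archimedean preordering, then Eidelheit--Kakutani separation, the GNS representation with bounded commuting multiplication operators, their joint spectral measure, and localization of the support by Weierstrass approximation. Your Steps 2--4 are sound as written. They use only that $\cT(\cS)$ is a preordering, which holds because $g_Jg_{J'}=g_{J\triangle J'}\,g_{J\cap J'}^2$ and the $h_i$-part is an ideal. That same ideal absorbs the equality constraints in the Krivine--Stengle identity, so taking $p,q\in\cT(\cS)$ is legitimate.

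The gap is Step 1, which you correctly call the heart of the theorem but do not carry out. Saying that each $x_i$ lies in $H$ only restates Archimedeanity. ``Iterated squaring'' applied to $p\,g=1+q$, with $g:=N-\|\bx\|^2$, does not get you there. The identity only gives $(1+q)g=pg^2\in\cT(\cS)$, a bound on $\|\bx\|^2$ weighted by $1+q$, and nothing is known about the size of $q\in\cT(\cS)$.

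The missing idea, due to W\"ormann, is to bound $q$ using the auxiliary quadratic module $T':=\cT(\cS)+g\,\cT(\cS)$. This module is trivially Archimedean, since $N-x_i^2=g+\sum_{j\neq i}x_j^2\in T'$ and the bounded elements form a ring. Hence $l-q=s+tg$ for some $l>0$ and $s,t\in\cT(\cS)$. Multiplying by $1+q$ and using $(1+q)g\in\cT(\cS)$ gives
\[
\tfrac{(l+1)^2}{4}-\bigl(q-\tfrac{l-1}{2}\bigr)^2=(1+q)(l-q)=(1+q)s+t\,(1+q)g\in\cT(\cS).
\]
Completing the square, $q-\tfrac{l-1}{2}$, and hence $q$, is bounded with respect to $\cT(\cS)$; say $l'-q\in\cT(\cS)$. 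Then
\[
N(1+l')-\|\bx\|^2=N(l'-q)+(1+q)g+q\|\bx\|^2\in\cT(\cS),
\]
which is the Archimedean property directly, without treating the coordinates one by one. With this inserted, the rest of your proof goes through.
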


\begin{theorem}[Putinar Positivstellensatz] (\cite{Put:93})
    \label{thm: Putinar}
    Suppose $\cS$ as specified in \eqref{eq:domain_S} satisfies the following Archimedean
    condition: there exists $R>0$ such that
    \begin{equation*}
        R-\|\bx\|^2\in\cQ(\cS).
    \end{equation*}
    If $f$ is strictly positive on $\cS$, then
    $f\in\cQ(\cS)$.
\end{theorem}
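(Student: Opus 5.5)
The plan is the classical functional-analytic proof: a separation argument combined with a moment (GNS/spectral) representation.

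\textbf{Step 1 (Archimedean property of $\cQ(\cS)$).} First I will show that the hypothesis $R-\|\bx\|^2\in\cQ(\cS)$ upgrades to the statement that for every $p\in\RR[\bx]$ there is $N>0$ with $N\pm p\in\cQ(\cS)$. Let $H$ be the set of polynomials with this property. Then $H$ is closed under sums, and it contains the constants and each $x_i$, because
\begin{equation*}
\tfrac{R+1}{2}\pm x_i=\tfrac12\bigl(R-\|\bx\|^2\bigr)+\tfrac12\sum_{k\neq i}x_k^2+\tfrac12(x_i\pm1)^2 .
\end{equation*}
It is also closed under products. If $N-p^2,\,M-q^2\in\cQ(\cS)$, then $NM-p^2q^2=M(N-p^2)+p^2(M-q^2)\in\cQ(\cS)$, since $\cQ(\cS)$ is closed under multiplication by squares. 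The identity $2N\pm 2pq\cdot N^{0}$ is handled by $\tfrac{N}{2}\pm pq=\tfrac12(p\pm q)^2+\tfrac12\bigl((N-p^2)+(N-q^2)\bigr)$ after normalising so that $N-p^2,N-q^2\in\cQ(\cS)$. Since $\RR[\bx]$ is generated by the $x_i$, we get $H=\RR[\bx]$. In particular, $1$ is an algebraic interior point of the convex cone $\cQ(\cS)$.

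\textbf{Step 2 (separation).} Suppose $f>0$ on $\cS$ but $f\notin\cQ(\cS)$. Because $1$ is an algebraic interior point, the Eidelheit/Hahn--Banach separation theorem (in its algebraic, topology-free form) gives a linear functional $L:\RR[\bx]\to\RR$ with $L\geq0$ on $\cQ(\cS)$, $L(f)\leq 0$ and $L(1)=1$. Hence $L(p^2)\ge0$, $L(g_jp^2)\ge0$ and $L(h_iq)=0$ for all polynomials $p,q$.

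\textbf{Step 3 (representing $L$ by a measure; the main obstacle).} The key step is to show that $L(p)=\int p\,d\mu$ for some probability measure $\mu$ supported on $\cS$. I would use the GNS construction.
\begin{itemize}
\item Quotient $\RR[\bx]$ by the null space $\{p: L(p^2)=0\}$, equip it with $\langle p,q\rangle=L(pq)$, and complete to a Hilbert space.
\item Let $X_i$ be the operator of multiplication by $x_i$. Step 1 gives $N-x_i^2\in\cQ(\cS)$, so $L\bigl(p^2(N-x_i^2)\bigr)\ge0$. This shows each $X_i$ is bounded and symmetric, hence self-adjoint, and the $X_i$ pairwise commute.
\item The spectral theorem for commuting bounded self-adjoint operators yields a projection-valued measure $E$ on a compact subset of $\RR^n$. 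Set $\mu(\cdot)=\langle E(\cdot)1,1\rangle$; then $L(p)=\langle p(X)1,1\rangle=\int p\,d\mu$.
\end{itemize}

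To place $\operatorname{supp}\mu$ inside $\cS$, I argue as follows. The condition $L(g_jp^2)\ge0$ for all $p$ says that $g_j(X)$ is a positive operator on a dense subspace. The spectral calculus then gives $g_j\ge0$ $E$-almost everywhere, hence $\mu$-almost everywhere. Similarly, $L(h_i^2)=0$ forces $h_i=0$ $\mu$-almost everywhere. The delicate points here are the boundedness and density arguments.

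\textbf{Step 4 (conclusion).} Since $\mu$ is a probability measure on $\cS$ and $f>0$ on $\cS$, we get $L(f)=\int_{\cS}f\,d\mu>0$. This contradicts $L(f)\le0$, so $f\in\cQ(\cS)$.
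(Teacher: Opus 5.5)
The paper does not prove this statement. It quotes it from \cite{Put:93} as background for the SOS hierarchy, so there is no in-paper argument to compare with.

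Your outline is the classical functional-analytic proof. You separate $f$ from $\cQ(\cS)$ at the algebraic interior point $1$, then solve the resulting moment problem by a GNS construction and the joint spectral theorem for the bounded commuting multiplication operators. Steps 2--4 are sound as written:
\begin{itemize}
\item boundedness of $X_i$ also gives well-definedness on the quotient;
\item positivity of the bounded operator $g_j(X)$ on a dense subspace extends to the whole space;
\item $L(h_i^2)=\int h_i^2\,d\mu=0$ handles the equality constraints;
\item the case $\cS=\emptyset$ is covered automatically, since the argument would produce a probability measure carried by $\cS$.
\end{itemize}
A by-product of this route is the dual statement that every functional nonnegative on $\cQ(\cS)$ is integration against a probability measure on $\cS$. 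That fact is exactly what underlies the moment side of the hierarchy.

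\textbf{The gap: closure of $H$ under products in Step 1.} You define $H$ by the linear bound $N\pm p\in\cQ(\cS)$. Both of your product identities, however, start from the quadratic bound $N-p^2\in\cQ(\cS)$, and you never show that $p\in H$ yields such a bound. This is not automatic. $\cQ(\cS)$ is closed under multiplication by squares but not under products, so you cannot just multiply $N+p$ by $N-p$. The missing link is
\begin{equation*}
N^2-p^2=\frac{1}{2N}\Bigl[(N+p)^2(N-p)+(N-p)^2(N+p)\Bigr],
\end{equation*}
which writes $N^2-p^2$ as squares times elements of $\cQ(\cS)$. The converse direction is $\tfrac{K+1}{2}\pm s=\tfrac12(K-s^2)+\tfrac12(s\pm1)^2$. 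Together these show that the linear and quadratic bounds define the same set, and then your product argument closes.

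Your displayed identity also has the wrong constant: $\tfrac12(p\pm q)^2+\tfrac12\bigl((N-p^2)+(N-q^2)\bigr)=N\pm pq$, not $\tfrac N2\pm pq$.

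The full strength of Step 1 is genuinely needed for $1$ to be an algebraic interior point in Step 2. For boundedness of $X_i$ in Step 3 alone you could bypass it, since $R-x_i^2=(R-\|\bx\|^2)+\sum_{k\neq i}x_k^2\in\cQ(\cS)$ directly.
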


In particular, the existence of Positivstellens\"atze effectively justifies the use of \eqref{eq:SOS} as hierarchies that solve polynomial optimization instances.

\subsection{Moment SOS hierarchy}

The moment-SOS hierarchy is a different hierarchy for polynomial optimization instances, and may be broadly understood as being dual to the SOS hierarchy. 

To describe these ideas, we let $\bv(\bx)$ denote the formal column vector
containing all standard monomials in $\bx$, and we let $\bv_r(\bx)$ denote the
column vector containing all monomials of degree at most $r$:
\begin{displaymath}
    \bv(\bx)
    =
    \big(\bx^\alpha\big)_{\alpha\in\NN^n}^\top,
    \qquad
    \bv_r(\bx)
    =
    \big(\bx^\alpha\big)_{\alpha\in\NN_r^n}^\top.
\end{displaymath}
Note that the basis vector $\bv_r(\bx)$ has dimension $s(n,r):=\binom{n+r}{n}$.  Using the monomial basis, any polynomial $f\in\RR[\bx]$ with degree at most $d$ can be expressed as
\begin{displaymath}
    f(\bx)
    =
    \sum_{\alpha\in\NN_d^n}f_\alpha\bx^\alpha
    =
    \langle\mathbf{f},\bv_d(\bx)\rangle,
    \qquad
    \mathbf{f}\in\RR^{s(n,d)}.
\end{displaymath}

Given a real sequence
$\biy=(y_\alpha)_{\alpha\in\NN^n}$ indexed by the monomials in
$\bv(\bx)$, we define the Riesz linear functional
$\ell_{\biy}:\RR[\bx]\to\RR$ by
\begin{displaymath}
    f(\bx)
    =
    \sum_{\alpha\in\NN^n}f_\alpha\bx^\alpha
    \quad\longmapsto\quad
    \ell_{\biy}(f)
    =
    \sum_{\alpha\in\NN^n}f_\alpha y_\alpha.
\end{displaymath}
The Riesz linear functional plays a central role in determining whether
a sequence $\biy$ is a moment sequence of a Borel measure; see, for instance, the Riesz--Haviland theorem in ~\cite[Theorem~3.1]{Las:09}.  We use the Riesz functional $\ell_{\biy}$ to define the moment and localizing matrices as follows.  Given an infinite sequence $\biy$ as in the above, the {\em moment matrix} $\bM(\biy)$, whose rows and columns are indexed by the monomials in
$\bv(\bx)$, is defined by
\begin{displaymath}
    \bM(\biy)(\alpha,\beta)
    =
    \ell_{\biy}\big(\bx^{\alpha+\beta}\big)
    =
    y_{\alpha+\beta},
    \qquad
    \forall\alpha,\beta\in\NN^n.
\end{displaymath}
Given $r\in\NN$, the $r$-{\em truncated moment matrix}, we denoted this by
$\bM_r(\biy)$, is the principal sub-matrix of $\bM(\biy)$ whose rows and columns are indexed by the monomials in $\bv_r(\bx)$.  Similarly, given a polynomial $g\in\RR[\bx]$, the localizing matrix $\bM(g\biy)$ associated with $\biy$ and $g$ is defined by
\begin{displaymath}
    \bM(g\biy)(\alpha,\beta)
    =
    \ell_{\biy}\big(g(\bx)\bx^{\alpha+\beta}\big)
    =
    \sum_{\gamma\in\NN^n}
    g_\gamma y_{\gamma+\alpha+\beta},
    \qquad
    \forall\alpha,\beta\in\NN^n.
\end{displaymath}
The $r$-{\em truncated localizing matrix} $\bM_r(g\biy)$ is principal sub-matrix of $\bM(g\biy)$ corresponding to the rows and columns indexed by monomials in $\bv_r(\bx)$.

Finally, given any $r\in\NN$ satisfying
\begin{displaymath}
    r
    \geq
    \max\big\{
        \lceil f\rceil,
        \lceil g_1\rceil,\ldots,\lceil g_m\rceil,
        \lceil h_1\rceil,\ldots,\lceil h_p\rceil
    \big\},
\end{displaymath}
we define the primal problem of the Schm\"udgen-type
moment--SOS hierarchy (based on the preordering) as follows:
\begin{align}
    f^{(r)}_{\rm mSOS,PO}
    :=
    \inf\Big\{
        \ell_{\biy}(f)
        =
        \sum_{\alpha\in\NN_{2r}^n}f_\alpha y_\alpha
        :
        \biy\in\pMTX2r
    \Big\}.
    \label{hierarchy: moment Schmudgen}
\end{align}
Here,
\begin{align}
    \pMTX2r
    :=
    \Big\{
        \biy\in\RR^{s(n,2r)}
        :\;&
        y_0=1,\quad
        \bM_r(\biy)\succeq0,
        \nonumber\\
        &
        \bM_{r-\lceil h_i\rceil}(h_i\biy)=0
        \quad
        \forall i\in[p],
        \nonumber\\
        &
        \bM_{r-\lceil g_J\rceil}(g_J\biy)\succeq0
        \quad
        \forall J\subseteq[m]
        \text{ such that }\lceil g_J\rceil\leq r
    \Big\}.
    \label{eq:pseudo-moment-set}
\end{align}
The primal problem of the Putinar-type moment--SOS hierarchy (based on the quadratic module) can be defined in an analogous fashion.  The elements of $\pMTX2r$ are known as {\em pseudo-moment sequences}, truncated at degree $2r$ (or also, of order $r$).  %For simplicity, we only discuss the Schm\"udgen-type hierarchy, which will be used throughout the remainder of this paper.

%[DO THESE DEFINITIONS COINCIDE WITH THE SOS HIERARCHY?]

\section{An SOS-hierarchy for quadratic optimization over measures} \label{sec: connection to POP}

In this section, we build on the preliminaries in Section \ref{sec:background} to provide intuition behind the construction of our hierarchy in Section~\ref{sec: contribution}.  

{\bf Approximating measures with atoms.}  Our first step is to gain intuition by approximating probability measures with atomic measures.  Consider the simpler instance where the marginal measures $\mu_i$ are discrete measures supported on $n_i$ atoms: 
\begin{equation*}
    \operatorname{supp}(\mu_i) = \{ \bx^{(i)}_1,\ldots,\bx^{(i)}_{n_i} \}.    
\end{equation*}
The marginal measure $\mu_i$ can then be represented by the probability vector
\begin{displaymath}
    \mu_i
    :=
    (\mu_{i1},\ldots,\mu_{in_i}),
    \qquad
    \mu_i
    =
    \sum_{j=1}^{n_i}
    \mu_{ij}\delta_{\bx^{(i)}_j}.
\end{displaymath}
Let
\begin{displaymath}
    \Im
    :=
    \big\{
        \gamma=(j_1,\ldots,j_m):
        j_i\in[n_i]\ \forall i\in[m]
    \big\},
    \qquad
    p:=|\Im|=\prod_{i=1}^m n_i.
\end{displaymath}
A feasible coupling can be represented by a finite-dimensional vector $\pi=(\pi_\gamma)_{\gamma\in\Im}\in\RR^p$.  The collection of all feasible coupling measures is then specified by the following polyhedral set:
\begin{multline*}
    \Pi(\mu_1,\ldots,\mu_m)
    =
    \Biggl\{
        \pi\in\RR^p:
        e_\gamma(\pi):=\pi_\gamma\geq0
        \quad\forall\gamma\in\Im,
        \\
        c_{k,l}(\pi)
        :=
        \sum_{\substack{
            \gamma=(j_1,\ldots,j_m)\in\Im\\
            j_k=l
        }}
        \pi_\gamma-\mu_{kl}
        =0
        \quad
        \forall k\in[m],\;
        l\in[n_k]
    \Biggr\}.
\end{multline*}

{\bf The moment--SOS hierarchy for atomic distributions.}  We proceed to specialize these ideas to our setting.  To this end, the prior work in \cite{tran:2026sum} studies a Schm\"udgen-type moment--SOS hierarchy for the discrete Gromov-Wasserstein problem, which is in effect our problem instance specialized to the case $m=2$.  By translating the ideas in \cite{tran:2026sum} to our set-up and subsequently extending these to general $m$, the Schm\"udgen-type moment--SOS hierarchy for our setting is as follows:
\begin{equation}
    \label{eq:schmudgen_dgw}
    \begin{aligned}
        \underset{\biy\in\RR^{s(p,2r)}}{\min}
        \qquad&
        \sum_{\gamma,\gamma'\in\Im}
        c_{\gamma,\gamma'}
        \ell_{\biy}(\pi_\gamma\pi_{\gamma'})
        \\
        \mbox{subject to}
        \qquad&
        y_0=1,
        \\
        &
        \overline{\bM}_{r-d_I}(e_I\biy)\succeq0
        \quad
        \forall I\subseteq\Im
        \text{ such that }d_I\leq r,
        \\
        &
        \ell_{\biy}
        \big(
            c_{k,l}(\pi)\pi^\alpha
        \big)
        =0
        \quad
        \forall\,|\alpha|\leq2r-1,\;
        k\in[m],\;
        l\in[n_k],
    \end{aligned}
    \tag{SDP-r}
\end{equation}
where
\begin{displaymath}
    e_I
    :=
    \prod_{\gamma\in I}e_\gamma,
    \qquad
    e_{\emptyset}:=1,
    \qquad
    d_I
    :=
    \left\lceil
        \frac{\deg(e_I)}{2}
    \right\rceil.
\end{displaymath}
Here, for every $I\subseteq\Im$ satisfying $d_I\leq r$,
$\overline{\bM}_{r-d_I}(e_I\biy)$ denotes the principal submatrix of
$\bM_{r-d_I}(e_I\biy)$ obtained by removing the rows and columns indexed by monomials $\pi^\alpha$ such that $|\alpha|<r-d_I$.  The work in \cite{tran:2026sum} investigates convergence rates associated with this hierarchy.

{\bf Reduction to a homogenous instance.}  The moment--SOS instance in \eqref{eq:schmudgen_dgw} involves all moments with degrees {\em up to} $2r$.  As it turns out, it is possible to reduce \eqref{eq:schmudgen_dgw} into a simpler instance which only involves moments with degree {\em equal to} $2r$ using the fact that $\pi$ is a probability distribution.

More precisely, let $S(\pi):=\sum_{\gamma\in\Im}\pi_\gamma$.  Because $\pi$ is a probability measure, it satisfies satisfies $S(\pi)=1$, which in turn implies $\ell_{\biy}(\pi^\alpha) = \ell_{\biy} (
\pi^\alpha\sum_{\gamma\in\Im}\pi_\gamma )$ for all $\alpha$.  By applying this recursively, we obtain the following for all $\alpha$ satisfying $|\alpha|<2r$:  
\begin{displaymath}
    \ell_{\biy}(\pi^\alpha)
=
    \ell_{\biy}\big(
        \pi^\alpha S(\pi)^{2r-|\alpha|}
    \big).
\end{displaymath}
In particular, this allows us to express moments with degree less than $2r$ as linear combinations of moments of degree exactly $2r$, resulting in the following reformulation of~\eqref{eq:schmudgen_dgw}:
\begin{equation}
\label{eq:schmudgen_dgw2}
\begin{aligned}
\underset{\biy\in\RR^{\overline{s}(p,2r)}}{\min}
\qquad&
    \sum_{\gamma,\gamma'\in\Im}
    c_{\gamma,\gamma'}
    \ell_{\biy}(\pi_\gamma\pi_{\gamma'})
\\
\mbox{subject to}
\qquad&
    \ell_{\biy}(1)=1,
\\
&
    \overline{\bM}_{r-d_I}(e_I\biy) \succeq 0
\quad \forall I\subseteq\Im \text{ such that } \deg(e_I) \leq r,
\\
&
    \ell_{\biy}\bigl(c_{k,l}(\pi)\pi^\alpha\bigr)=0
    \quad
    \forall\,|\alpha|\leq2r-1,\;
    k\in[m],\;
    l\in[n_k].
\end{aligned}
\tag{SDP-r2}
\end{equation}
Here, $\overline{\bv}_r(\pi)$ denotes the vector of all monomials in $\pi$ with degree exactly equal to $r$.

{\bf Translating moment--SOS to measure-centric constraints.}  From a geometric perspective, the constraints in~\eqref{eq:schmudgen_dgw}
can be viewed as one that provides an outer approximation of the set
\begin{displaymath}
    \operatorname{conv}
    \bigl\{
        \bv_{2r}(\pi):
        \pi\in\Pi(\mu_1,\ldots,\mu_m)
    \bigr\},
\end{displaymath}
while the constraints in~\eqref{eq:schmudgen_dgw2} can be viewed as one that provides an outer
approximation of the reduced moments
\begin{displaymath}
    \operatorname{conv}
    \bigl\{
        \overline{\bv}_{2r}(\pi):
        \pi\in\Pi(\mu_1,\ldots,\mu_m)
    \bigr\}.
\end{displaymath}
The latter description is convenient because the elements $\overline{\bv}_{2r}(\pi)$ have a natural interpretation as moments of the product measure
\begin{displaymath}
    \pi^{\otimes 2r}
    :=
    \underbrace{
        \pi\otimes\cdots\otimes\pi
    }_{2r\ \mathrm{times}}
    \in \cP(\cX^{2r}).
\end{displaymath}

Our next steps are to seek a formulation that is cast in the language of measures.  Following the prior discussion, the appropriate space to operate over is $\cX^{2r}$.  We let $\bP \in \cP(\cX^{2r})$ denote the decision variable.  Because the formulation~\eqref{eq:schmudgen_dgw2} has a natural interpretation as moments of the product measure $ \pi^{\otimes 2r}$, a natural approach is to introduce constraints to encourage $\bP = \pi^{\otimes 2r}$.  In fact, we should design constraints so that the eventual formulation coincides with~\eqref{eq:schmudgen_dgw2}.  By doing so, we may hope that the convergence properties commonly associated with the moment--SOS hierarchy readily translate over to our formulation over measures.  We explain this process in the following.

[Symmetry]:  Given a collection of atoms $\bx_1,\ldots,\bx_{2r}\in\cX$ and any permutation $\sigma\in S_{2r}$, we have
\begin{displaymath}
    \pi^{\otimes 2r}
    \bigl(
        \{\bx_1\}\times\cdots\times\{\bx_{2r}\}
    \bigr)
    =
    \prod_{i=1}^{2r}\pi(\{\bx_i\})
    =
    \prod_{i=1}^{2r}\pi(\{\bx_{\sigma(i)}\})
    =
    \pi^{\otimes 2r}
    \bigl(
        \{\bx_{\sigma(1)}\}\times\cdots
        \times\{\bx_{\sigma(2r)}\}
    \bigr).
\end{displaymath}
The measure $\bP$ should satisfy a similar property.  Note that the decision variable $\overline{\bv}_{2r}(\pi)$ accounts for this symmetry in the sense that the coordinates of $\overline{\bv}$ differ if and only if they are not equivalent via any permutation over the indices.  The natural extension of the above constraint for $\bP$ --  adapted for general Borel inputs beyond atomic masses -- is exactly \eqref{eq:cond_psd}:
\begin{equation}
    \label{eq:constraint_sym}
    \bP\bigl(C_1\times\cdots\times C_{2r}\bigr)
    =
    \bP\bigl(
        C_{\sigma(1)}\times\cdots\times C_{\sigma(2r)}
    \bigr)
    \qquad
    \forall\,\sigma\in S_{2r},\quad
    C_1,\ldots,C_{2r}\in\cB(\cX).
\end{equation}
%Equivalently, $\bP$ is invariant under every permutation of its $2r$ coordinates.

[Marginal condition]:  In the atomic setting, the sequence $\biy$ is indexed by monomials $\pi^\alpha$ with degree equals to $2r$. Each coordinate of $\biy$
\begin{displaymath}
    y_\alpha
    =
    \ell_{\biy}(\pi^\alpha),
    \qquad
    |\alpha|=2r,
\end{displaymath}
represents the probability mass assigned by $\bP$ to an associated atom of $\cX^{2r}$.  
The marginal
\begin{displaymath}
    \ell_{\biy}\bigl(c_{k,l}(\pi)\pi^\alpha\bigr)=0
    \qquad
    \forall\,|\alpha|\leq2r-1,\quad
    k\in[m],\quad l\in[n_k],
\end{displaymath}
admit the following measure-wise interpretation:
\begin{equation}
    \label{eq:constraint_mar}
    \bP\big|_{\cX_k\times\cX^{2r-1}}
    =
    \mu_k\otimes
    \bP\big|_{\cX^{2r-1}}
    \qquad
    \forall\,k\in[m].
\end{equation}
Here, the restriction notation denotes the corresponding marginal projection of $\bP$.  %In words, the $\cX_k$-component of one coordinate has distribution $\mu_k$ and is independent of the remaining $2r-1$ coordinates. %By the symmetry of $\bP$, the same condition holds for any of its $2r$ coordinates. %Hence,~\eqref{eq:constraint_mar} is precisely the atomic counterpart of the marginal condition~\eqref{eq:cond_mar}.

[Positive semidefinite conditions]:  In the atomic setting \eqref{eq:schmudgen_dgw2}, the positive semidefinite constraint associated with the moment matrix can be equivalently expressed as follows
\begin{equation*}
    \overline{\bM}_r(\biy)
    =
    \ell_{\biy}\bigl(
        \overline{\bv}_r(\pi)\overline{\bv}_r(\pi)^\top
    \bigr)
    \succeq 0
    \quad\Longleftrightarrow\quad
    \ell_{\biy}\bigl(f^2\bigr)\geq 0
    \quad
    \forall\,f\in\RR[\pi]_r.
\end{equation*}
This condition is induced by the fact that $\overline{\bv}_r(\pi)\overline{\bv}_r(\pi)^\top$ is PSD.  By identifying the homogeneous monomials of degree $r$ with the
coordinates of the product measure $\pi^{\otimes r}$, the appropriate extension of the condition $\ell_{\biy}(f^2)\geq 0$ to measures is
\begin{equation*}
    \int_{\cX^{2r}}
        f(\bx_1,\ldots,\bx_r)
        f(\bx_{r+1},\ldots,\bx_{2r})
        \,d\pi^{\otimes 2r}
    \;=\;
    \Big(
        \int_{\cX^r}
            f(\bx_1,\ldots,\bx_r)
            \,d\pi^{\otimes r}
    \Big)^2
    \geq 0.
\end{equation*}
Consequently, the natural extension of the moment-matrix constraint for probability measures $\bP\in\cP(\cX^{2r})$ is to impose
\begin{displaymath}
    \int_{\cX^{2r}}
        f(\bx_1,\ldots,\bx_r)
        f(\bx_{r+1},\ldots,\bx_{2r})
        \,d\bP
    \geq 0
    \qquad
    \forall\,f\in\mathfrak{B}_b(\cX^r).
\end{displaymath}
Here, $\mathfrak{B}_b(E)$ denotes the space of bounded real-valued
Borel functions over $E$.

In a similar fashion, the positive semidefinite constraints associated with the localizing matrices can be written as
\begin{equation*}
    \overline{\bM}_{r-d_I}(e_I\biy)
    =
    \ell_{\biy}\left(
        e_I(\pi)
        \overline{\bv}_{r-d_I}(\pi)
        \overline{\bv}_{r-d_I}(\pi)^\top
    \right)
    \succeq 0,\quad
    \forall\,I \subset \Im,\quad
    \deg(e_I)=2d_I\leq 2r,
\end{equation*}
or, equivalently,
\begin{displaymath}
    \ell_{\biy}\bigl(e_I(\pi)f(\pi)^2\bigr)\geq0
    \qquad
    \forall\,I,\quad
    \deg(e_I)=2d_I\leq2r,\quad
    f\in\RR[\pi]_{r-d_I}.
\end{displaymath}
Here, we note that $e_I(\pi) \overline{\bv}_{r-d_I}(\pi) \overline{\bv}_{r-d_I}(\pi)^\top$ is PSD.  The monomial $e_I(\pi)$ is non-negative because it represents the mass assigned by $\pi^{\otimes 2d_I}$ to a prescribed atom of $\cX^{2d_I}$.

To extend this interpretation beyond discrete spaces, we replace the mass assigned to an atom with the mass assigned to a Borel set. For every $0\leq t\leq r$, every $f\in\mathfrak{B}_b(\cX^{r-t})$, and every
$A\in\cB(\cX^{2t})$, we have
\begin{align*}
    &
    \int_{\cX^{2r}}
        f(\bx_1,\ldots,\bx_{r-t})
        f(\bx_{r-t+1},\ldots,\bx_{2r-2t})
        \chi_A(\bx_{2r-2t+1},\ldots,\bx_{2r})
        \,d\pi^{\otimes 2r}
    \\
    &\qquad
    =
    \left(
        \int_{\cX^{r-t}]}
            f(\bx_1,\ldots,\bx_{r-t})
            \,d\pi^{\otimes r-t}
    \right)^2
    \pi^{\otimes2t}(A)
    \geq0.
\end{align*}
Here, $\chi_A$ denotes the indicator function of $A$. More generally, we may replace $\chi_A$ with any nonnegative bounded Borel-measurable function $g$ on $\cX^{2t}$.  In this case, we ask that
\begin{align*}
    &
    \int_{\cX^{2r}}
        f(\bx_1,\ldots,\bx_{r-t})
        f(\bx_{r-t+1},\ldots,\bx_{2r-2t})
        g(\bx_{2r-2t+1},\ldots,\bx_{2r})
        \,d\pi^{\otimes 2r}
    \\
    &\qquad
    =
    \left(
        \int_{\cX^t}
            f(\bx_1,\ldots,\bx_{r-t})
            \,d\pi^{\otimes r-t}
    \right)^2
    \int_{\cX^{2t}}
        g(\bx_{2r-2t+1},\ldots,\bx_{2r})
        \,d\pi^{\otimes(2t)}
    \geq0.
\end{align*}

Subsequently, we impose the following family of positive
semidefinite constraints on $\bP\in\cP(\cX^{2r})$:
\begin{multline}
    \label{eq:constraint_psd}
    \mathsf{B}_{t,g}(f,f) = \int_{\cX^{2r}}
        f(\bx_1,\ldots,\bx_{r-t})
        f(\bx_{r-t+1},\ldots,\bx_{2r-2t})
        g(\bx_{2r-2t+1},\ldots,\bx_{2r})
        \,d\bP
    \geq0
    \\
    \forall\,0\leq t\leq r,\qquad
    f\in\mathfrak{B}_b(\cX^{r-t}),\qquad
    g\in\mathfrak{B}_b^+(\cX^{2t}),
\end{multline}
where
\begin{displaymath}
    \mathfrak{B}_b^+(E)
    :=
    \bigl\{
        g\in\mathfrak{B}_b(E):g\geq0
    \bigr\}.
\end{displaymath}
%When $t=0$ and $g\equiv1$, condition~\eqref{eq:constraint_psd} reduces to the measure-valued counterpart of the usual moment-matrix constraint:
%\begin{displaymath}
%    \int_{\cX^{2r}}
%        f(\bx_1,\ldots,\bx_r)
%        f(\bx_{r+1},\ldots,\bx_{2r})
%        \,d\bP
%    \geq0
%    \qquad
%    \forall\,f\in\mathfrak{B}_b(\cX^r).
%\end{displaymath}
%By contrast, the case $t=r$ simply expresses the non-negativity of the integral of every nonnegative bounded Borel-measurable function with respect to $\bP$.

[Conclusion]:  Let $\Pi_{r}$ be the intersection of the conditions \eqref{eq:constraint_sym},\eqref{eq:constraint_mar},
and~\eqref{eq:constraint_psd}.  Then by noting that the objective is linear integral with respect to $\bP$, the resulting reformulation of \eqref{eq:schmudgen_dgw2} over the measures is:
\begin{equation}
    \label{eq:ot-r-th}
    \rho^{(r)}
    :=
    \inf_{\bP\in\Pi_r}
    \left\{
        c(\bP)
        :=
        \int_{\cX^{2r}}
            c(\bx_1,\bx_2)
            \,d\bP(\bx_1,\ldots,\bx_{2r})
    \right\}.
\end{equation}

\section{Convergence of the hierarchy} 

The goal of this section is to establish convergence of the sequence $\{ \rho^{(r)} \}_{r=1}^{\infty}$ towards $\ropt$.  First, we begin with a basic observation:

\begin{proposition} \label{thm:simpleLB}
The sequence $\{ \rho^{(r)} \}_{r=1}^{\infty}$ is monotonically increasing.  In addition, $\rho^{(r)} \leq \rho^*$ for all $r \in \NN$.
\end{proposition}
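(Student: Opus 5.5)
The plan is to prove the two claims separately, working with the feasible set $\Pi_r$ of \eqref{eq:ot-r-th}, which is cut out by \eqref{eq:constraint_sym}, \eqref{eq:constraint_mar} and \eqref{eq:constraint_psd}.

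\textbf{Upper bound $\rho^{(r)}\leq\ropt$.} Let $\pi\in\Pi(\mu_1,\ldots,\mu_m)$ be arbitrary and set $\bP:=\pi^{\otimes 2r}$. I will check that $\bP\in\Pi_r$ and that $c(\bP)=\int c\,d(\pi\otimes\pi)$; taking the infimum over $\pi$ then gives the bound.
\begin{itemize}
\item Symmetry \eqref{eq:constraint_sym} holds on rectangles because $\pi^{\otimes 2r}(C_1\times\cdots\times C_{2r})=\prod_i\pi(C_i)$.
\item For the marginal condition \eqref{eq:constraint_mar}, note that the marginal of $\pi^{\otimes 2r}$ on $\cX_k\times\cX^{2r-1}$ is $(\pi|_{\cX_k})\otimes\pi^{\otimes(2r-1)}=\mu_k\otimes\pi^{\otimes(2r-1)}$. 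This is exactly $\mu_k\otimes\bP|_{\cX^{2r-1}}$.
\item For \eqref{eq:constraint_psd}, Fubini factorizes the integral as $\bigl(\int f\,d\pi^{\otimes(r-t)}\bigr)^2\int g\,d\pi^{\otimes 2t}\geq0$. This is legitimate because $f$ and $g$ are bounded.
\item For the objective, integrating out coordinates $3,\dots,2r$ gives $c(\bP)=\int c\,d(\pi\otimes\pi)$, provided the integral is well defined. I will assume this, as is implicit in \eqref{eq:ot_quadform}, for example $c$ bounded below or integrable.
\end{itemize}

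\textbf{Monotonicity $\rho^{(r)}\leq\rho^{(r+1)}$.} Given $\bP\in\Pi_{r+1}\subseteq\cP(\cX^{2r+2})$, let $\bQ$ be its marginal on the first $2r$ coordinates, so $\bQ(A)=\bP(A\times\cX^2)$. Because the objective only involves the first two coordinates, $c(\bQ)=c(\bP)$. It therefore suffices to show $\bQ\in\Pi_r$, since then $\rho^{(r)}\leq c(\bP)$ for every $\bP\in\Pi_{r+1}$.
\begin{itemize}
\item Symmetry of $\bQ$ follows by applying \eqref{eq:constraint_sym} for $\bP$ to permutations in $S_{2r+2}$ that fix the last two indices, with $C_{2r+1}=C_{2r+2}=\cX$.
\item For the marginal condition, marginalizing the identity $\bP|_{\cX_k\times\cX^{2r+1}}=\mu_k\otimes\bP|_{\cX^{2r+1}}$ onto the first $2r$ blocks gives the level-$r$ identity for $\bQ$.
\end{itemize}

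\textbf{Main obstacle: \eqref{eq:constraint_psd} for $\bQ$.} The index layout changes between levels, so this is the one step that needs care. Fix $0\leq t\leq r$, $f\in\mathfrak{B}_b(\cX^{r-t})$ and $g\in\mathfrak{B}_b^+(\cX^{2t})$. The quantity to bound is
\[
\int f(\bx_1,\ldots,\bx_{r-t})f(\bx_{r-t+1},\ldots,\bx_{2r-2t})g(\bx_{2r-2t+1},\ldots,\bx_{2r})\,d\bQ .
\]
I rewrite it as an integral against $\bP$ with the level-$(r+1)$ parameter $t'=t+1$, which satisfies $t'\leq r+1$. The $f$-blocks then have length $(r+1)-t'=r-t$, as required. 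For the weight I use $g'(\bx_{2r-2t+1},\ldots,\bx_{2r+2}):=g(\bx_{2r-2t+1},\ldots,\bx_{2r})\cdot 1$, which is nonnegative, bounded and Borel on $\cX^{2t'}$. With this choice the integrand at level $r+1$ is exactly the integrand above, extended by the constant $1$ in the last two coordinates.

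Integrating that constant out recovers the $\bQ$-integral, so \eqref{eq:constraint_psd} for $\bP$ gives nonnegativity. The case $t=0$ is handled by the same construction. Symmetry is not even needed here, because the layout matches directly.

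This establishes $\bQ\in\Pi_r$ and hence monotonicity. Combined with the upper bound, the proposition follows.
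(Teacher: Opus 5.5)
Your proof is correct and follows the same argument as the paper. The tensor power $\pi^{\otimes 2r}$ gives $\rho^{(r)}\leq\ropt$, and marginalizing a higher-level feasible measure onto its first coordinates gives monotonicity; the paper omits the feasibility checks, and your shift $t'=t+1$ with the weight extended by the constant $1$ is exactly the detail needed to verify \eqref{eq:constraint_psd} for the marginal.
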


% Note that Proposition \ref{thm:simpleLB} applies more generally to the sequence $\{\rho^{(r)}\}$ and not just $\{\rho^{(r)}\}$.

%[ADAPT PROOF TO THIS RESTATEMENT]

\begin{proof}
Let $\pi$ be a feasible solution to \eqref{eq:general_quadform}.  One checks that $\pi^{\otimes 2r}$ is feasible in \eqref{eq:general-r-th} (these checks are straightforward and omitted).  Consider the solutions $\pi$ and $\pi^{\otimes 2r}$ in \eqref{eq:general_quadform} and \eqref{eq:general-r-th} respectively.  Both of these evaluate to the same objective.  By noting that every feasible $\pi$ induces a $\pi^{\otimes 2r}$ that is feasible in \eqref{eq:general-r-th}, and by minimizing over $\pi$, we conclude that $\rho^{(r)} \leq \rho^*$.

Suppose $s \leq t$.  It remains to show that $\rho^{(s)} \leq \rho^{(t)}$.  The proof is similar.  Let $\bP^{(t)}$ be a feasible solution to \eqref{eq:general-r-th} with $r=t$.  Suppose $s<t$.  One checks that $\bP^{(s)}:=\bP^{(t)}\big|_{\cX^{2s}}$ obtained by marginalizing out $2(t-s)$ dimensions is also feasible in \eqref{eq:general-r-th} with $r=s$, and evaluates to the same objective.  By minimizing over $\bP^{(t)}$, we conclude that $\rho^{(s)} \leq \rho^{(t)}$.
%   This comes directly from the fact that for any $\pi \in \Pi$, the tensorized measure $\pi^{\otimes 2r}$ belongs to $\Pi_r$. In other words, when we project $\Pi_r$ into $\cX^2$, we obtain an outer approximation of $\Pi$. The proof is based on direct calculations to check all conditions in $\Pi_r$, which we omit here.
\end{proof}

\subsection{Main convergence result}

In the remainder of this section, we focus on the optimization model \eqref{eq:ot_quadform}; that is $\{ \rho^{(r)} \}$.  Our goal is to establish convergence.  

\begin{assume}[Regularity] \label{assume:reg}
    We assume that the cost function $c$ is continuous over $\cX^2$.  Furthermore, we assume that there is a metric $d_j$ over the spaces $\cX_j$, as well as a metric $d$ over the joint space $\cX$ that varies continuously with respect to the $d_j$'s in the sense that $d \rightarrow 0$ whenever $d_j \rightarrow 0$ for all $j \in [m]$.
\end{assume}

The role of the metric $d_j$ is to allow us to construct a discretization of the space $\cX_j$, and in particular, quantify the ``fineness'' of the resulting discretization.  In this paper, we assume that the sets $\cX_j$'s are Borel subsets of $\mathbb{R}^{n_j}$, and hence a natural choice is take the $d_j$'s to be the Euclidean metric.  We choose to write our results in terms of $d$ because it simplifies notation, and it clarifies the dependency on the metric space structure within our analysis.  

The simplest setting in which convergence holds is when $\cX$ is compact.

\begin{theorem}\label{thm:convergence_of_ctshierarchy_compact}
Suppose $\cX$ is compact, and suppose Assumption \ref{assume:reg} holds.  Then $\rho^{(r)} \rightarrow \ropt$.
\end{theorem}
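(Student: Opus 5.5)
Since Proposition~\ref{thm:simpleLB} already gives $\rho^{(r)}\leq\rho^{(r+1)}\leq\ropt$, it remains to show $\lim_r\rho^{(r)}\geq\ropt$. The plan is to reduce to the atomic setting of Section~\ref{sec: connection to POP}: discretize $\cX$ at a fixed resolution, push feasible measures of \eqref{eq:ot-r-th} forward to the grid, and use the convergence of the finite-dimensional moment--SOS hierarchy \eqref{eq:schmudgen_dgw2}. Letting the resolution go to zero then finishes the argument.

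\textbf{Discretization.} Fix $\varepsilon>0$. Since $\cX^2$ is compact, $c$ is uniformly continuous, and Assumption~\ref{assume:reg} gives $\delta>0$ with the following property: whenever $d_j(\bx_{ij},\bx'_{ij})\leq\delta$ for all $i\in\{1,2\}$ and $j\in[m]$, we have $|c(\bx_1,\bx_2)-c(\bx'_1,\bx'_2)|\leq\varepsilon$. Each $\cX_j$ is compact, so it admits a finite Borel partition $\{A^{(j)}_l\}_l$ into cells of $d_j$-diameter at most $\delta$. Choose a representative point in each cell and let $q_j$ map each cell to its representative. Set $q:=q_1\times\cdots\times q_m$ and $\nu_j:=(q_j)_\#\mu_j$. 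Then:
\begin{itemize}
\item the $\nu_j$ are atomic, and $q^{\otimes 2r}$ maps $\cX^{2r}$ onto a finite grid;
\item $|c-c\circ(q\times q)|\leq\varepsilon$ on $\cX^2$.
\end{itemize}

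\textbf{Pushforward of relaxation-feasible measures.} Let $\bP\in\Pi_r$ be feasible for \eqref{eq:ot-r-th}, and set $\widehat\bP:=(q^{\otimes 2r})_\#\bP$. Each constraint transfers as follows.
\begin{itemize}
\item Symmetry \eqref{eq:constraint_sym} is preserved, since $q^{\otimes 2r}$ acts coordinatewise and identically on each block.
\item The marginal condition \eqref{eq:constraint_mar} becomes the same condition with $\mu_k$ replaced by $\nu_k$, since $q$ acts componentwise on $\cX_1\times\cdots\times\cX_m$.
\item The PSD condition \eqref{eq:constraint_psd} is preserved. For bounded $f$ and $g\geq 0$ on the grid, the functions $f\circ q^{\otimes(r-t)}$ and $g\circ q^{\otimes 2t}$ are admissible test functions for $\bP$.
\end{itemize}
Hence $\widehat\bP$ is feasible for the level-$r$ measure relaxation of the atomic instance with marginals $\nu_1,\ldots,\nu_m$ and cost $\hat c:=c|_{\text{grid}}$. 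For atomic measures, this relaxation is exactly \eqref{eq:schmudgen_dgw2}, which was constructed to match it constraint by constraint: Sym gives the homogeneous indexing, Mar gives the $c_{k,l}$ constraints, and PSD with $g$ an indicator of an atom gives the localizing matrices for $e_I$. I will verify this identification carefully.

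Let $\hat\rho^{(r)}$ and $\hat\rho^\star$ denote the relaxation value and optimal value of the atomic problem. Then
\[
\int c\,d\bP\;\geq\;\int \hat c\,d\widehat\bP-\varepsilon\;\geq\;\hat\rho^{(r)}-\varepsilon .
\]
The hierarchy \eqref{eq:schmudgen_dgw}, and hence its homogeneous reformulation \eqref{eq:schmudgen_dgw2}, converges to $\hat\rho^\star$ as $r\to\infty$. This follows from the Schm\"udgen Positivstellensatz (Theorem~\ref{thm: Schmüdgen}) applied to the compact polytope $\Pi(\nu_1,\ldots,\nu_m)$, as in \cite{tran:2026sum}. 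Consequently $\liminf_r\rho^{(r)}\geq\hat\rho^\star-\varepsilon$.

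\textbf{Lifting discrete couplings back.} Let $\hat\pi$ be an optimal coupling of $\nu_1,\ldots,\nu_m$, which is a finite-dimensional problem. Write $\mu_j^{l}:=\mu_j(\cdot\mid A^{(j)}_l)$ for cells of positive mass, and define
\[
\pi:=\sum_{\gamma=(l_1,\ldots,l_m)}\hat\pi_\gamma\,\mu_1^{l_1}\otimes\cdots\otimes\mu_m^{l_m}.
\]
Its $j$-th marginal is $\sum_l\nu_j(\{l\})\mu_j^{l}=\mu_j$, so $\pi\in\Pi(\mu_1,\ldots,\mu_m)$. Moreover $(q\times q)_\#(\pi\otimes\pi)=\hat\pi\otimes\hat\pi$, so the uniform bound gives $\ropt\leq\hat\rho^\star+\varepsilon$. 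Combining the two estimates,
\[
\lim_r\rho^{(r)}\;\geq\;\ropt-2\varepsilon,
\]
and since $\varepsilon>0$ was arbitrary, the claim follows.

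\textbf{Main obstacle.} I expect the hardest step to be the exact identification of the pushed-forward measure constraints with \eqref{eq:schmudgen_dgw2}, together with invoking the convergence of \eqref{eq:schmudgen_dgw2} for a fixed discrete instance. In particular, I need to check that the restricted (homogeneous) localizing matrices $\overline{\bM}$ and the equivalence with \eqref{eq:schmudgen_dgw} preserve the Schm\"udgen convergence. If that identification is delicate, the fallback is to prove directly that an atomic symmetric $\widehat\bP$ satisfying PSD and Mar has $\int\hat c\,d\widehat\bP\to\hat\rho^\star$. This would go through a P\'olya/Schm\"udgen-type certificate for $\sum\hat c_{\gamma\gamma'}\pi_\gamma\pi_{\gamma'}-\hat\rho^\star+\eta$ on the polytope, homogenized by powers of $S(\pi)$.
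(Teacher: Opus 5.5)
Your proposal is correct and follows essentially the paper's route, namely the proof of Theorem~\ref{thm:convergence_tightness} with $\cY=\cX$: discretize, send each $\bP\in\Pi_r$ to a feasible point of \eqref{eq:schmudgen_dgw3} as in the first half of Theorem~\ref{thm: connnection between relaxations}, invoke convergence of the finite moment--SOS hierarchy, and lift a discrete coupling back through normalized restrictions of $\mu_1\otimes\cdots\otimes\mu_m$ as in Lemma~\ref{lemma: discrete of rho}. Your version is slightly leaner, since only that one inclusion is needed (the upper bound is Proposition~\ref{thm:simpleLB}) and uniform continuity of $c$ replaces the Lipschitz constant $L(\varepsilon)$ that continuity alone does not supply; the concern in your last paragraph is also harmless, because the constraints $\ell_{\biy}(c_{k,l}(\pi)\pi^\alpha)=0$ for $|\alpha|\leq 2r-1$ make the homogeneous localizing constraints $\overline{\bM}_{r-d_I}(e_I\biy)\succeq 0$ equivalent to the full Schm\"udgen ones.
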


With a bit more effort, it is possible to extend the conclusion of Theorem \ref{thm:convergence_of_ctshierarchy_compact} to settings where $\cX$ is not compact using assumptions about the tightness of probability measures.  More precisely, suppose we have:
\begin{assume}[Tightness]  \label{assume:tightness}
For every $\varepsilon>0$, there exists a compact set $K_{\varepsilon} \subset \cX^2$ such that 
\begin{equation}\label{eq: tightness}
    \int_{\cX^2 \backslash K_{\varepsilon}} |c(\bx_1,\bx_2)|~d\bP \leq \varepsilon \quad \forall \ \bP \in \Pi_r,\quad \forall \ r \in \NN.
\end{equation}
\end{assume}

We make some brief remarks concerning Assumption \ref{assume:tightness}.  It is, for instance, satisfied for the Gromov-Wasserstein problem whenever the distances $d_{\cY}$, $d_{\cZ}$ are chosen to be the squared Euclidean distances, the lost function $c$ is chosen to be the squared loss, and where the mm-spaces $(\cY,d_{\cY},\mu_{\cY})$ and $(\cZ,d_{\cZ},\mu_{\cZ})$ have finite fourth moments; i.e., 
\begin{equation*}
    \int_{\cY^2} d_{\cY}^4(\biy,\biy')~d\mu_{\cY} \otimes \mu_{\cY} < \infty, \quad \mbox{and} \quad  \int_{\cZ^2} d_{\cZ}^4(\bz,\bz')~d\mu_{\cZ} \otimes \mu_{\cZ} < \infty.
\end{equation*}
Furthermore, this condition is satisfied under Carleman's condition, which is introduced later in this paper.
Thus, for instance when we consider the 2-GW problem with squared distance, the assumptions are satisfied based on the following evaluation.
\begin{equation*}
\begin{aligned}
    \int_{(\cY \times \cZ)^2} (d_\cY^2(\biy,\biy')-d_{\cZ}^2(\bz,\bz'))^2~d\bP \leq & \int_{(\cY \times \cZ )^2}d^4_\cY(\biy,\biy')~d\bP+\int_{(\cY \times \cZ )^2}d^2_{\cY}(\bz,\bz')d\bP \\
    \leq & \int_{\cY^2} d^4_\cY(\biy,\biy')~d\mu_{\cY} \otimes \mu_{\cY} + \int_{\cZ^2} d^4_\cZ(\bz,\bz')~d\mu_{\cZ} \otimes \mu_{\cZ}.
\end{aligned}
\end{equation*}
The last expression is independent of $\bP$, which leads to uniform tightness assumption. 
This is our main convergence result regarding \eqref{eq:ot-r-th}. 

\begin{theorem} \label{thm:convergence_tightness}
Suppose Assumptions \eqref{assume:reg} and \eqref{assume:tightness} hold.  Then the sequence $\{\rho^{(r)}\}_{r \in \NN}$ increases monotonically towards $\ropt$.
\end{theorem}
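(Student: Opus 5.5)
Monotonicity and the bound $\rho^{(r)}\le\ropt$ come from Proposition~\ref{thm:simpleLB}. We must show $\liminf_r \rho^{(r)}\ge \ropt$. The plan is to reduce to the compact case of Theorem~\ref{thm:convergence_of_ctshierarchy_compact} via a truncation argument, using Assumption~\ref{assume:tightness} to make the error uniform in $r$ and in $\bP\in\Pi_r$.

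Fix $\varepsilon>0$ and let $K_\varepsilon\subset\cX^2$ be as in Assumption~\ref{assume:tightness}. Replacing $K_\varepsilon$ by $K'\times K'$, where $K'$ is the union of the two coordinate projections of $K_\varepsilon$, we may assume $K_\varepsilon=K\times K$. By enlarging further to a product of compact sets $K_1\times\cdots\times K_m$ with $K_j\subset\cX_j$ compact, we may also assume $K$ itself is a product. Define the truncated cost $c_\varepsilon := c\,\chi_{K\times K}$. Then for every $r$ and every $\bP\in\Pi_r$ we have $|c(\bP)-c_\varepsilon(\bP)|\le\varepsilon$, and the same holds for $\pi\otimes\pi$ with $\pi$ feasible, since $\pi^{\otimes 2r}\in\Pi_r$. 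Hence $\rho^{(r)}\ge\rho^{(r)}_\varepsilon-\varepsilon$ and $\ropt\le\ropt_\varepsilon+\varepsilon$, where the subscript $\varepsilon$ denotes the same problems with cost $c_\varepsilon$. It therefore suffices to show $\liminf_r\rho^{(r)}_\varepsilon\ge\ropt_\varepsilon-O(\varepsilon)$.

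For this step I would rerun the argument behind Theorem~\ref{thm:convergence_of_ctshierarchy_compact} rather than invoke it as a black box, because $c_\varepsilon$ is discontinuous on $\partial K$ and the marginals are not supported on $K$. The compact-case mechanism, which I expect is discretization, goes as follows. Partition $K_j$ into finitely many Borel cells of $d_j$-diameter at most $\delta$, and add the complement $\cX_j\setminus K_j$ as one extra cell. Push $\bP\in\Pi_r$ forward to the finite product grid. The constraints \eqref{eq:constraint_sym}, \eqref{eq:constraint_mar} and \eqref{eq:constraint_psd} are preserved under such cell-wise quantization: take $f,g$ constant on cells. The result is a feasible point of the atomic hierarchy \eqref{eq:schmudgen_dgw2} for the discretized marginals. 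Convergence of that finite Schm\"udgen hierarchy, which holds for compact polyhedral feasible sets by Theorem~\ref{thm: Schmüdgen}, shows that the discretized level-$r$ value tends, as $r\to\infty$, to the discrete quadratic problem's value. That discrete problem is itself at least $\ropt_\varepsilon$ minus a modulus-of-continuity error $\omega_c(\delta)$, obtained by lifting a discrete coupling back to a genuine coupling via cell-wise products of the conditional marginals. Uniform continuity of $c$ on $K\times K$ together with Assumption~\ref{assume:reg} controls the quantization error inside $K\times K$. The cell outside $K$ contributes at most $\varepsilon$ by tightness, provided the cost assigned to atoms outside $K$ is zero, which is exactly how $c_\varepsilon$ behaves.

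The main obstacle is this uniformity step. The finite-dimensional convergence for fixed $\delta$ is not uniform in $\delta$, so the order of limits matters. Choose $\delta$ first, depending on $\varepsilon$. For that fixed grid take $r\to\infty$, so that $\liminf_r\rho^{(r)}_\varepsilon\ge\ropt_\varepsilon-\omega_c(\delta)-2\varepsilon$. Then let $\delta\to0$ and $\varepsilon\to0$. The delicate point is verifying that quantization maps $\Pi_r$ into the discrete pseudo-moment set with the marginal constraint \eqref{eq:constraint_mar} intact, including the outside cell, whose discretized marginal mass is $\mu_j(\cX_j\setminus K_j)$. I would check this carefully by testing \eqref{eq:constraint_mar} against indicator functions of cells.
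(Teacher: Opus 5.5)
Your proposal is correct and follows essentially the paper's own route. You truncate to a compact product set with a leftover cell, quantize $\bP\in\Pi_r$ cell-wise into a feasible point of the Schm\"udgen-type moment hierarchy for the discretized coupling QP, lift discrete couplings back to genuine couplings via cell-wise products of the normalized marginals, and send $r\to\infty$ before $\delta\downarrow 0$ and $\varepsilon\downarrow 0$. There are two small differences, both in your favor. First, you only use that quantization maps $\Pi_r$ into the discrete SDP feasible set; the paper also proves the reverse lift $\bP(\biy)\in\Pi_r$, which the lower bound does not need since $\rho^{(r)}\le\ropt$ is already known. Second, your modulus of continuity $\omega_c(\delta)$ is the right error term, whereas the paper invokes a Lipschitz constant $L(\varepsilon)$ that mere continuity of $c$ does not supply.
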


{\em Remark.}  Note that Assumption \ref{assume:tightness} includes the case where $\cX$ is compact.  In particular, the proof of Theorem \ref{thm:convergence_tightness} implies Theorem \ref{thm:convergence_of_ctshierarchy_compact} as a by-product.

{\bf Summary of proof of Theorem \ref{thm:convergence_tightness}.}  We give a high level description of the proof of Theorem \ref{thm:convergence_tightness}, which can be divided into 3 main steps descibed as follows: 
\begin{itemize}
    \item{\textbf{Step 1.}} We construct a compact subset $\cY$ of $\cX$ in the sense that calculating~\eqref{eq:ot_quadform} with the objective taking integral over $\cY^2$ approximates~\eqref{eq:ot_quadform}, where the tightness is controlled by a parameter $\varepsilon >0$. We then discretize the space $\cY$ up to some precision, controlled by the parameter $\delta$.  The discretization induces an approximation of the optimization instance \eqref{eq:ot_quadform} as a finite dimensional quadratic program.  In a similar fashion, we show that the same discretization also induces a similar approximation of \eqref{eq:ot-r-th} as a hierarchy of finite-dimensional convex optimization problem. 

    \item{\textbf{Step 2.}} We show that the finite-dimensional convex optimization problems obtained by discretizing~\eqref{eq:ot-r-th} in Step~1 are SDPs. These SDPs can be naturally interpreted as a moment--SOS-type hierarchy applied to the finite-dimensional QP approximation of~\eqref{eq:ot_quadform}. Crucially, this interpretation allows us to establish that, as the relaxation order tends to infinity, the optimal values of the SDPs converge to the optimal value of the corresponding QP.

    \item{\textbf{Step 3.}} The third step shows that optimal solutions of the discretized QP and its associated SDP relaxations can be used to construct approximate optimal solutions of~\eqref{eq:ot_quadform} and~\eqref{eq:ot-r-th}, respectively. More precisely, the corresponding objective values are shown to be close to the optimal values $\rho^*$ and $\rho^{(r)}$, with approximation errors controlled by the parameters $\varepsilon$ and $\delta$. Finally, by letting $\delta\downarrow 0$ and $\varepsilon\downarrow 0$ in the appropriate order, and subsequently taking $r\to\infty$, we obtain the convergence asserted in Theorem~\ref{thm:convergence_tightness}.
\end{itemize}

The crux of our argument lies in the third step -- this is precisely where we rely on the specific structure of the constraints described in \eqref{eq:coupling_defn}.  In short, the essential ingredient that \eqref{eq:ot_quadform} possesses is that solutions (couplings) obtained by solving discretized instances of the OT problem can be seamlessly viewed as feasible solutions to its continuous problem.  For the more general class problems described in \eqref{eq:general_quadform}, there is no guarantee that a similar procedure succeeds.

In the remainder of this section, we prove Theorem \eqref{thm:convergence_tightness} according to the above outline.  %On a first reading of this paper, one may omit the rest of the section and proceed to Section [REF]. 

\subsection{Step 1: Discretize the space}

Our first step is to discretize the space $\cX$.  First, we fix $\varepsilon > 0$.  For every $j \in [m]$, let $\cY_j \subset \cX_j$, be compact subsets such that the set $\cY := \cY(\varepsilon) = \prod_{j=1}^m \cY_j$ satisfies the tightness condition in~\eqref{eq: tightness}. To see why $\cY$ can be chosen in this form, consider the following construction:  Let $K_{\varepsilon}$ be as specified in \eqref{eq: tightness}.  Define $\cY_j := \{ y_j \cup y_{j+m} : (y_1,y_2,\ldots) \in K_{\varepsilon} \}$.  $\cY_j$ is compact because $K_{\varepsilon}$ is.  But $K_{\varepsilon} \subseteq (\prod_{j=1}^m \cY_j) \times (\prod_{j=1}^m \cY_j)$, and hence $\cY$ satisfies our tightness requirement.
%We next generalize the idea of concentration-extension. 

Second, we fix $\delta > 0$.  Since the $\cY_i$'s are compact, we can partition each of these into a finite number of subsets such that each subset has diameter smaller than $\delta$; i.e., 
\begin{displaymath}
    \cY_j = \bigsqcup_{i=1}^{k_j} \cY_{j,i},\quad  \operatorname{diam}(\cY_{j,i}) < \delta \quad \forall j \in [m]. 
\end{displaymath}
For $j \in [m]$, we define $\cY_{j,k_j+1} := \cX_j \backslash \cY_j$ (the ``leftovers'').  The sets $\cX$ and $\cY$ admit the following partitions 
\begin{equation*}
\begin{aligned}
   \cX & := \textstyle \bigsqcup_{i_1 \in [k_1+1],\; \dots,\; i_m \in [k_m+1]}\cY_{1,i_1} \times \dots \times \cY_{m,i_m}, \\
   \cY & := \textstyle \bigsqcup_{i_1 \in [k_1],\; \dots,\; i_m \in [k_m]}\cY_{1,i_1} \times \dots \times \cY_{m,i_m}.
\end{aligned}
\end{equation*}
To simplify notation, we write $\cY_{1,i_1} \times \dots \times \cY_{m,i_m} = \cY_{\alpha}$, where $\alpha$ denotes the corresponding tuple of indices (respectively, $\cX$).  We let $\cI$ and $\cJ$ denote the collection of tuples of indices of $\cX$ and $\cY$ respectively.  This allows us to write
\begin{equation*}
    \cY = \bigsqcup_{\alpha \in \cI} \cY_{\alpha}, \quad \mbox{and} \quad \cX = \bigsqcup_{\alpha \in \cJ} \cY_{\alpha}. 
\end{equation*}

In our subsequent analysis, we operate on product spaces and hence need partitions of these.  Let $s \in \NN$, and let $\cX^{s} = \cX \times \ldots \times \cX$ ($s$ copies).  The previously introduced partitions naturally induce partitions on product spaces as follows
\begin{equation*}
    \cY^s := \textstyle \bigsqcup_{\alpha_1,\; \dots,\;\alpha_s \in \cI} \cY_{\alpha_1} \times \dots \times \cY_{\alpha_m} \quad \mbox{and} \quad \cX^s := \textstyle  \bigsqcup_{\alpha_1,\; \dots,\;\alpha_s \in \cJ} \cY_{\alpha_1} \times \dots \times \cY_{\alpha_m}.
\end{equation*}
To simply notation, we let 
\begin{displaymath}
    \cI(s) = \{\boldsymbol{\alpha}:=(\alpha_1,\dots,\alpha_s):\; \alpha_i \in \cI \; \forall i \in [s]\},\quad 
    \cJ(s) = \{\boldsymbol{\alpha}:=(\alpha_1,\dots,\alpha_s):\; \alpha_i \in \cJ \; \forall i \in [s]\}.
\end{displaymath}
Then $\cI(s)$ and $\cJ(s)$ are the indexation sets of the constructed partitions of $\cY^s$ and $\cX^s$, respectively.  Subsequently, we may write
\begin{equation*}
    \cY^s = \bigsqcup_{\boldsymbol \alpha \in \cI(s)} \cY_{\boldsymbol \alpha}, \quad \mbox{and} \quad \cX^s = \bigsqcup_{\boldsymbol \alpha \in \cJ(s)} \cY_{\boldsymbol \alpha},
\end{equation*}
where $\cY_{\boldsymbol\alpha}$ denote $\cY_{\alpha_1} \times \dots \times \cY_{\alpha_m}$.

As a consequence of Assumption \ref{assume:reg}, we have that $\{\cY_{\boldsymbol \alpha}\}_{\boldsymbol \alpha \in \cI(s)}$ forms a $d^{(s)}(\delta)$-cover of $\cY^s$, for some function $d^{(s)}(\delta)$ satisfying $\lim_{\delta \to 0} d^{(s)}(\delta) = 0$.

In the next part of the construction, we will access specific points in $\cX^s$.  To simplify notation, we adopt the following notation:  Given $\bx = (\bx_1,\dots,\bx_s) \in \cX^s$, we denote $\bx_{(i:j)} = (\bx_i,\dots,\bx_j)$.  Given $r \in \NN$ and $\balpha \in \cJ(s)$, we choose from $\cY_{\balpha}$ a representative point $\bx_{\balpha}$, and we define $\bP_{\balpha} = \bP(\cY_{\balpha})$.  
Note that these constructions are dependent of the parameter $\delta$.  To simplify notation, however, we omit the dependency on $\delta$.

These discretizations allow us to approximate the continuous objective with the discrete objective.  First, recall in Assumption \ref{assume:reg} that we have assumed $c$ to be continuous.  Second, recall that for every $\varepsilon>0$, one can choose $\cY := \cY(\varepsilon)$ such that it is compact.  Fix a sequence of values $\{\varepsilon_i\}_{i=1}^{\infty}$ where $\varepsilon_i \rightarrow 0$ decreases to zero.  For such a sequence, let $L(\varepsilon)$, $\varepsilon \in \{\varepsilon_i\}_{i=1}^{\infty}$, be the Lipschitz constant of $c$ over the choice of the set $\cY(\varepsilon)^2$.  Subsequently we then have the following bounds for any $\bP \in \Pi_r$, for all $r \in \NN$:

\begin{align}
    &\left|\int_{\cY^{2r}}c(\bx_{(1,2)})~d\bP- \sum_{\boldsymbol{\alpha} \in \cI(2r)}c(\bx_{\alpha,(1,2)})\bP_{\boldsymbol{\alpha}}\right| \leq L(\varepsilon)d^{(2)}(\delta),\label{eq:compact_approx_err}\\
    &\left|\int_{\cX^{2r}}c(\bx_{(1,2)})~d\bP- \sum_{\boldsymbol{\alpha} \in \cI(2r)}c(\bx_{\alpha,(1,2)})\bP_{\boldsymbol{\alpha}}\right| \leq L(\varepsilon)d^{(2)}(\delta) + \varepsilon\label{eq:discrete_approx_err}.
\end{align}

In particular, equation~\eqref{eq:compact_approx_err} follows from two key observations. First, the set $\cY^{2r}$ admits the partition
\begin{displaymath}
    \cY^{2r}
    =
    \bigsqcup_{\boldsymbol{\alpha}\in\cI(2r)}
    \cY_{\boldsymbol{\alpha}}.
\end{displaymath}
Second, by construction of this partition, the first two block coordinates of each cell $\cY_{\boldsymbol{\alpha}}$ are contained in a unique cell of the induced partition of $\cY^2$, whose diameter is at most $d^{(2)}(\delta)$. Recall that for any $\boldsymbol{\alpha} \in \cI(2r)$, $\bx_{\boldsymbol{\alpha},(1,2)}$ denotes the corresponding representative point. By Assumption~\ref{assume:reg}, the function $c$ is $L(\varepsilon)$--Lipschitz continuous on $\cY(\varepsilon)^2$. Consequently,
\begin{equation*}
    \begin{aligned}
       \left|\sum_{\boldsymbol{\alpha} \in \cI(2r)}\left( \int_{\cY_{\boldsymbol{\alpha}}}c(\bx_{(1,2)})~d\bP- c(\bx_{\alpha,(1,2)})\bP_{\boldsymbol{\alpha}}\right) \right| \leq L(\varepsilon)\delta\cdot \sum_{\boldsymbol{\alpha} \in \cI(2r)}\bP_{\boldsymbol{\alpha}} = L(\varepsilon)\delta\cdot \cP(\cY^{2r}) \leq L(\varepsilon)\delta,
    \end{aligned}
\end{equation*}
which proves~\eqref{eq:compact_approx_err}.

Combining~\eqref{eq:compact_approx_err} with Assumption~\ref{assume:tightness} yields
\begin{equation*}
    \begin{aligned}
        &\Biggl|\int_{\cX^{2r}}c(\bx_{(1,2)})~d\bP- \sum_{\boldsymbol{\alpha} \in \cI(2r)}c(\bx_{\alpha,(1,2)})\bP_{\boldsymbol{\alpha}}\Biggr|\\
        \leq \quad & \Biggl|\int_{\cY^{2r}}c(\bx_{(1,2)})~d\bP- \sum_{\boldsymbol{\alpha} \in \cI(2r)}c(\bx_{\alpha,(1,2)})\bP_{\boldsymbol{\alpha}}\Biggr| + \Biggl|\int_{\cX^{2r}}c(\bx_{(1,2)})~d\bP- \int_{\cY^{2r}}c(\bx_{(1,2)})~d\bP\Biggr|\\
        \leq \quad & L(\varepsilon)d^{(2)}(\delta) + \varepsilon.
    \end{aligned}
\end{equation*}
This establishes~\eqref{eq:discrete_approx_err}.

We next write down the finite dimensional quadratic program that approximates~\eqref{eq:ot_quadform}.  Recall the discretization on the marginal spaces $\cX_j = \bigsqcup_{i=1}^{k_j+1} \cY_{j,i} \; \forall j \in [m]$. To simplify notation, we denote $\overline{k_j} = k_j +1 \; \forall j \in [m]$.  Consider the following set
\begin{multline*}
\Delta(\varepsilon,\delta) := \Bigl\{\pi=(\pi_{i_1,\dots,i_m})_{i_1 \in \overline{k_1},\dots,i_m \in \overline{k_m}} :\pi_{i_1,\dots,i_m} \geq 0,\; \forall i_1 \in [\overline{k_1}],\dots, i_m \in [\overline{k_m}],\\ \sum_{l \neq j,\; i_l \in \overline{k_l}}\pi_{i_1,\dots,i_m} = \mu_{j,i_j}:= \mu(\cY_{j,i_j}) \; \forall j \in [m],\; i_j \in \overline{k_j} \Bigr\}.  
\end{multline*}
In words, $\Delta(\varepsilon,\delta)$ is the set of joint distributions (couplings) whose marginals equal to probability distributions arising from the discretization of the $\cX_j$'s.  In what follows, we write 
\begin{displaymath}
\pi = (\pi_\alpha)_{\alpha \in \cJ} \in \Delta(\varepsilon,\delta) \in \RR^{|\cJ|}.
\end{displaymath}
We define $\bc(\varepsilon,\delta) := (c(\bx_{(\alpha_1,\alpha_2)}))_{(\alpha_1,\alpha_2) \in \cI(2)}$, and we define
\begin{equation*}
\langle \bc(\varepsilon,\delta), \pi \otimes \pi\rangle = \sum_{\alpha_1,\alpha_2 \in \cI} c(\bx_{(\alpha_1,\alpha_2)})\pi_{\alpha_1} \pi_{\alpha_2}.
\end{equation*}
Consider the following optimization instance
\begin{equation}\label{eq:discrete_qp}
\rho(\varepsilon,\delta) := \min \; \langle \bc(\varepsilon,\delta), \pi \otimes \pi\rangle \quad \mbox{subject to}\quad \pi \in \Delta(\varepsilon,\delta).
\end{equation}
This is a finite dimensional quadratic program.  It is to be understood as the discrete analog of \eqref{eq:ot_quadform}.

We have showed that discretizations on $\cX$ incudes a quadratic program approximating~\eqref{eq:ot_quadform}. The remaining of this step is to show that the discretizations provides convex optimization problem approximating~\eqref{eq:ot-r-th}. For each positive integer $r$, and aforementioned discretization of $\cX_i$ for $i \in [m]$, we consider feasible solution $\bP \in \Pi_r$ of~\eqref{eq:ot-r-th}. Let $\boldsymbol{\beta}$ be a multi-index vector of $\NN^{|\cJ|}$ of the construction 
\begin{displaymath}
    \boldsymbol{\beta} := (\beta_{\alpha})_{\alpha \in \cJ}, \quad \beta_{\alpha} \in \NN\; \forall \; \alpha \in \cJ,
\end{displaymath}
and the length of a multi-index vector, denoted by $|\boldsymbol{\beta}|$, is defined by 
\begin{displaymath}
    |\boldsymbol{\beta}| := \sum_{\alpha \in \cJ} \beta_{\alpha}.
\end{displaymath}
For a positive integer $d$, we define 
\begin{displaymath}
    \NN^{|\cJ|}_{d} := \{ \boldsymbol{\beta} \in \NN^{|\cJ|}:\; |\boldsymbol{\beta}| \leq d\}, \quad \mbox{and} \quad
    \overline{\NN}^{|\cJ|}_{d} := \{ \boldsymbol{\beta} \in \NN^{|\cJ|}:\; |\boldsymbol{\beta}| = d\}.
\end{displaymath}
then $\bigl| \NN^{|\cJ|}_{d} \bigr|= s(|\cJ|,d)$, and $\bigl| \overline{\NN}^{|\cJ|}_{d} \bigr|= \overline{s}(|\cJ|,d)$ according to notation of multi-index vector in the moment-SOS hierarchy in Section~\ref{sec: connection to POP}. 

Consider $\bP \in \pi_r$. Recall that $\bP$ is symmetric, then for $\boldsymbol{\alpha}= (\alpha_1,\ldots,\alpha_{2r}) \in \cJ(2r)$, we have 
\begin{displaymath}
    \bP(\cY_{\boldsymbol{\alpha}}) = \bP(\cY_{\boldsymbol{\sigma(\alpha)}})\, \quad \mbox{where} \quad 
    \sigma(\boldsymbol{\alpha}) := (\alpha_{\sigma(1)},\ldots,\alpha_{\sigma(2r)}) \quad \forall \; \sigma \in S_{2r}.
\end{displaymath}
For such $\boldsymbol{\alpha} \in \cJ(2r)$, let define $\boldsymbol{\beta}(\boldsymbol{\alpha}) \in \overline{\NN}^{|\cJ|}_{2r}$ by 
\begin{equation}\label{def: alpha to beta}
    \boldsymbol{\beta}(\boldsymbol{\alpha}) := (\beta_\alpha)_{\alpha \in \cJ},\quad \text{where $\beta_{\alpha}$ is the number of appearances of $\alpha$ in $\boldsymbol{\alpha}$}.
\end{equation}

One can clearly observe that for any $\boldsymbol{\beta} \in \overline{\NN}^{|\cJ|}_{2r}$, there exists $\boldsymbol{\alpha} \in \cJ(2r)$ that $\boldsymbol{\beta}=\boldsymbol{\alpha}$. Take the following construction as an instance, for any $\boldsymbol{\beta} \in \overline{\NN}^{|\cJ|}$, we define 
\begin{equation}\label{def: beta to alpha}
    \boldsymbol{\alpha}(\boldsymbol{\beta}) = (\ldots,\underbrace{\alpha,\ldots,\alpha}_{\beta_{\alpha} \text{ times}},\ldots) \in \cJ(|\boldsymbol{\beta}|),\quad
    \mbox{then} \quad 
    \boldsymbol{\beta}(\boldsymbol{\alpha}) = \boldsymbol{\beta}.
\end{equation}
Hence, there is an one-to-one correspondence between $\overline{\NN}^{|\cJ|}_{2r}$ and $\cJ(2r) / \sim $, where $\sim$ denotes an equivalent relationship induced by the permutation set $S_{2r}$. Since $\bP$ is symmetric, we define 
\begin{displaymath}
    \biy(\bP) := (\biy_{\boldsymbol{\beta}})_{\boldsymbol{\beta} \in \overline{\NN}^{|\cJ|}_{2r}},\quad \mbox{where} \quad \biy_{\boldsymbol{\beta}} = \bP_{\boldsymbol{\alpha}}\; \mbox{ such that }\; \boldsymbol{\beta}(\boldsymbol{\alpha})=\boldsymbol{\beta}.
\end{displaymath}

We recall that in~\eqref{eq:discrete_approx_err}, the objective of~\eqref{eq:ot-r-th} is approximated by 
\begin{displaymath}
    \sum_{\boldsymbol{\alpha} \in \cI(2r)}c(\bx_{\alpha,(1,2)})\bP_{\boldsymbol{\alpha}},
\end{displaymath}
which can be reformulated in terms of $\biy$ in the fashion: let us define 
\begin{displaymath}
    \langle \bc(\varepsilon,\delta), \biy(\bP) \rangle = \sum_{\boldsymbol{\alpha} \in \cI(2r)}c(\bx_{\alpha,(1,2)})\iy_{\boldsymbol{\beta}(\boldsymbol{\alpha})}= \sum_{\boldsymbol{\alpha} \in \cI(2r)}c(\bx_{\alpha,(1,2)})\bP_{\boldsymbol{\alpha}}.
\end{displaymath}
Then the problem~\eqref{eq:ot-r-th} admits an approximation by discretization, defined as
\begin{equation}\label{eq:discrete_convexp}
\rho^{(r)}(\varepsilon,\delta) := \min \; \langle \bc(\varepsilon,\delta), \biy \rangle \quad \mbox{subject to}\quad \biy \in \biy(\Pi_r):= \{\biy(\bP),\; \bP \in \Pi_r\}.
\end{equation}
This problem is a finite-dimensional convex optimization problem since the objective is linear in $\biy$, the set $\biy(\Pi_r)$ can be interpreted as a "projection" of $\Pi_r$ by taking mass of partitions sets, which inherits the convexity of $\Pi_r$.

In summary, this step demonstrates that the discretization induces the quadratic problem~\eqref{eq:discrete_qp} and the convex optimization~\eqref{eq:discrete_convexp} to approximate~\eqref{eq:ot_quadform} and~\eqref{eq:ot-r-th} respectively.

\subsection{Step 2: Verifying SDP relaxation of \texorpdfstring{~\eqref{eq:discrete_convexp}}{(1)}}

The goal of this step is to show that~\eqref{eq:discrete_convexp} may be viewed as the semidefinite relaxation obtained by applying the moment--SOS hierarchy to the quadratic programming instance~\eqref{eq:discrete_qp}.  Following our discussion in Section~\ref{sec: connection to POP}, the quadratic programming instance~\eqref{eq:discrete_qp} is a polynomial optimization instance in which the objective is quadratic and where the constraint set is the following polyhedral set 
\begin{displaymath}
    \pi \in \Delta(\varepsilon,\delta) = \Pi(\overline{\mu}_i, \dots,\overline{\mu}_m),
\end{displaymath}
where $\overline{\mu}_j$ is an $\overline{k}_j$--dimensional distributional vector from the discretization in Step~1.  Given a positive integer $r$, the $r$--level of the moment--SOS hierarchy of~\eqref{eq:discrete_qp} is given by 
\begin{equation}
 \label{eq:schmudgen_dgw3}
\begin{aligned}
\underset{\biy\in\RR^{\overline{s}(|\cJ|,2r)}}{\min}
\qquad&
    \sum_{\alpha_1,\alpha_2\in\cI}c(\bx_{\alpha_1},\bx_{\alpha_2})
    \ell_{\biy}(\pi_{\alpha_1}\pi_{\alpha_2}) = \langle \bc(\varepsilon,\delta), \biy \rangle 
\\
\mbox{subject to}
\qquad&
\ell_{\biy}(1)=1,
\\
&
    \overline{\bM}_{r-t}(e_I(\pi)\biy)
    :=
    \ell_{\biy}\left(
        e_I(\pi)\,
        \overline{\bv}_{r-t}(\pi)
        \overline{\bv}_{r-t}(\pi)^\top
    \right)
    \succeq0 \quad \forall\, I \subset \cJ,\; |I| = 2t \leq 2r.
\\
&
    \ell_{\biy}\bigl(c_{j,i}(\pi)\pi^{\boldsymbol{\beta}}\bigr)=0
    \quad
    \forall\,|\boldsymbol{\beta}|\leq2r-1,\;
    j\in[m],\;
    i\in[\overline{k}_i].
\end{aligned}
\tag{SDP-r3}
\end{equation}
For any $I \subset J$, we may define $\boldsymbol{\beta}(I) = (\chi_{I}(\alpha))_{\alpha \in \cJ}$ so that $e_I(\pi) = \pi^{\boldsymbol{\beta}(I)}$.  The SDP relaxation \eqref{eq:schmudgen_dgw3} is similar to~\eqref{eq:schmudgen_dgw2} in which the Riesz linear functional associated to $\biy$ is defined by 
\begin{displaymath}
  \ell_{\biy}(\pi^{\boldsymbol{\beta}})
    :=
    y_{\boldsymbol{\beta}}
    \quad
    \forall\,|\boldsymbol{\beta}|=2r, \qquad 
    \ell_{\biy}(\pi^{\boldsymbol{\beta}})
    :=
    \ell_{\biy}\left(
        \pi^{\boldsymbol{\beta}} S(\pi)^{2r-|\boldsymbol{\beta}|}
    \right) \quad \forall \,|\boldsymbol{\beta}|<2r, \qquad S(\pi) = \sum_{\alpha \in \cJ}\pi_{\alpha}.
\end{displaymath}
In particular, we formalize the connection between~\eqref{eq:discrete_convexp} and~\eqref{eq:schmudgen_dgw3} in the following.

\begin{theorem}\label{thm: connnection between relaxations}
    The feasible set of~\eqref{eq:schmudgen_dgw3} is $\biy(\Pi_r)$, which is the feasible set of~\eqref{eq:discrete_convexp}. Consequently, ~\eqref{eq:discrete_convexp} and~\eqref{eq:schmudgen_dgw3} coincide.
\end{theorem}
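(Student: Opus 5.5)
The proof has two inclusions: $\biy(\Pi_r)\subseteq$ feasible set of \eqref{eq:schmudgen_dgw3}, and the reverse.

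\emph{Forward inclusion.} Take $\bP\in\Pi_r$ and set $\biy=\biy(\bP)$, so that $y_{\boldsymbol\beta}=\bP(\cY_{\boldsymbol\alpha})$ whenever $\boldsymbol\beta(\boldsymbol\alpha)=\boldsymbol\beta$. The value $y_{\boldsymbol\beta}$ does not depend on the choice of $\boldsymbol\alpha$ by \eqref{eq:constraint_sym}.

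First, normalization. Extend $\ell_{\biy}$ to lower degrees through $S(\pi)^{2r-|\boldsymbol\beta|}$. For any $\boldsymbol\alpha'\in\cJ(s)$ this gives the identity
\begin{displaymath}
\ell_{\biy}(\pi^{\boldsymbol\beta(\boldsymbol\alpha')}) = \bP(\cY_{\boldsymbol\alpha'}\times\cX^{2r-s}),
\end{displaymath}
because summing over all cells of the remaining coordinates recovers the marginal. In particular $\ell_{\biy}(1)=\bP(\cX^{2r})=1$.

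Second, the marginal constraints. Compute $\ell_{\biy}(c_{j,i}(\pi)\pi^{\boldsymbol\beta})$ with the identity above. It equals
\begin{displaymath}
\bP\bigl((\mathrm{pr}_j^{-1}\cY_{j,i})\times \cY_{\boldsymbol\alpha'}\times\cX^{\cdot}\bigr)-\mu_j(\cY_{j,i})\,\bP(\cY_{\boldsymbol\alpha'}\times\cX^{\cdot}),
\end{displaymath}
which vanishes by \eqref{eq:constraint_mar} applied to the Borel sets $\cY_{j,i}$ and $\cY_{\boldsymbol\alpha'}$, using symmetry to place the coordinates appropriately.

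Third, the PSD constraints. For $I\subset\cJ$ with $|I|=2t$, take a vector $v$ indexed by degree-$(r-t)$ monomials. Lift it to the simple function
\begin{displaymath}
f=\sum_{\boldsymbol\alpha\in\cJ(r-t)} \frac{v_{\boldsymbol\beta(\boldsymbol\alpha)}}{\#\{\boldsymbol\alpha':\boldsymbol\beta(\boldsymbol\alpha')=\boldsymbol\beta(\boldsymbol\alpha)\}}\,\chi_{\cY_{\boldsymbol\alpha}}
\end{displaymath}
on $\cX^{r-t}$. Equivalently, and more simply, put weight $v_{\boldsymbol\beta}$ on the single representative $\chi_{\cY_{\boldsymbol\alpha(\boldsymbol\beta)}}$. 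Take $g=\chi_{\cY_{\boldsymbol\alpha(\boldsymbol\beta(I))}}$ on $\cX^{2t}$. Then $\mathsf B_{t,g}(f,f)$ expands as $\sum v_{\boldsymbol\beta}v_{\boldsymbol\beta'}\,y_{\boldsymbol\beta+\boldsymbol\beta'+\boldsymbol\beta(I)}$, again by symmetry, and this is exactly $v^\top\overline{\bM}_{r-t}(e_I\biy)v$. So \eqref{eq:constraint_psd} yields $\overline{\bM}_{r-t}(e_I\biy)\succeq 0$.

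\emph{Reverse inclusion.} Given $\biy$ feasible for \eqref{eq:schmudgen_dgw3}, I need a $\bP\in\Pi_r$ with $\biy(\bP)=\biy$. The natural construction spreads the mass $y_{\boldsymbol\beta(\boldsymbol\alpha)}$ over each cell $\cY_{\boldsymbol\alpha}$, $\boldsymbol\alpha\in\cJ(2r)$, in a way that respects the marginals:
\begin{displaymath}
\bP:=\sum_{\boldsymbol\alpha\in\cJ(2r)} y_{\boldsymbol\beta(\boldsymbol\alpha)}\bigotimes_{k=1}^{2r}\nu_{\alpha_k},
\qquad
\nu_{\alpha}:=\bigotimes_{j=1}^m \frac{\mu_j|_{\cY_{j,\alpha_j}}}{\mu_j(\cY_{j,\alpha_j})},
\end{displaymath}
where cells of $\mu_j$-measure zero are dropped. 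One then has to show $y_{\boldsymbol\beta}=0$ for any $\boldsymbol\beta$ touching such a cell; this follows from the marginal equalities together with nonnegativity.

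With this $\bP$, the constraints follow in order. Symmetry is immediate. Nonnegativity of $\bP$ comes from the localizing constraints with $t=r$, which give $y_{\boldsymbol\beta}\ge0$. The marginal condition follows from the vanishing of $\ell_{\biy}(c_{j,i}\pi^{\boldsymbol\beta})$ together with the product structure of the $\nu_\alpha$.

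The hard step is \eqref{eq:constraint_psd} for \emph{arbitrary} bounded $f$ and nonnegative $g$, not just cell indicators. Write $\int ffg\,d\bP$ as a sum over cells in the last $2t$ coordinates with weights $\int g\,d\nu^{\otimes}\ge0$. This reduces the claim to showing that a nonnegative combination of localizing forms applied to the averaged coefficients $\int f\,d\nu_{\boldsymbol\alpha}$ is nonnegative.

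The subtlety is that $g$ may weight cells $\boldsymbol\alpha\in\cJ(2t)$ whose multi-indices $\boldsymbol\beta$ have repeated entries. Such cells are not of the form $\boldsymbol\beta(I)$ for a set $I$, so \eqref{eq:schmudgen_dgw3} only controls them indirectly. For example, for $g$ concentrated on a cell with repetitions I must derive PSD-ness of $\ell_{\biy}(\pi^{\boldsymbol\beta}\overline{\bv}\,\overline{\bv}^\top)$ for a general monomial $\pi^{\boldsymbol\beta}$ of degree $2t$. My first attempt is to write $\pi^{\boldsymbol\beta}=(\pi^{\boldsymbol\gamma})^2e_I$, with $I$ the set of odd-exponent indices, absorb $(\pi^{\boldsymbol\gamma})^2$ into the quadratic form as a square, and use the homogenization by $S(\pi)$ to pad degrees. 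This absorbed-square reduction is the step I expect to require the most care.

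The coefficients of $f$ on non-representative orderings also need attention. I would average $f$ over permutations, which leaves $\mathsf B_{t,g}(f,f)$ unchanged by symmetry, to reduce to functions that depend only on $\boldsymbol\beta$.
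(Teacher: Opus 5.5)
Your proposal is correct and is essentially the paper's proof. It uses the same two inclusions, the same indicator choices $f=\sum_{\boldsymbol\beta}p_{\boldsymbol\beta}\chi_{\cY_{\boldsymbol\alpha(\boldsymbol\beta)}}$ and $g=\chi_{\cY_{\boldsymbol\alpha(\boldsymbol\beta(I))}}$ in the forward direction, and the same cell-wise product measure $\bP(\biy)$ in the reverse direction, whose PSD check reduces to nonnegative combinations of localizing forms evaluated at cell averages of $f$ (the paper groups these averages by $\boldsymbol\beta$ instead of symmetrizing $f$, which amounts to the same thing).

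Your extra care fills two points the paper passes over silently: cells of zero $\mu_j$-measure, and the localizing matrices $\overline{\bM}_{r-t}(\pi^{\boldsymbol\beta}\biy)$ for non-squarefree $\boldsymbol\beta$, which (SDP-r3) imposes only for $e_I$. Your absorbed-square reduction settles the latter with no padding by $S(\pi)$, since $\pi^{\boldsymbol\gamma}\,\mathbf{p}^\top\overline{\bv}_{r-t}(\pi)$ is already homogeneous of degree $r-t'$ where $|I|=2t'$. This same reduction, via diagonal entries, is also what gives $y_{\boldsymbol\beta}\ge 0$ for non-squarefree $\boldsymbol\beta$, because the $t=r$ constraints alone only cover squarefree ones.
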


\begin{proof}
 For clarity, we recall the definitions of the two feasible sets under
consideration.

\begin{multicols}{2}

\noindent
\textbf{Feasible set of~\eqref{eq:schmudgen_dgw3}.}
\par\noindent\rule{\linewidth}{0.4pt}

\begin{align*}
& 
\biy\in\RR^{\overline{s}(|\cJ|,2r)},\quad \ell_{\biy}(1)=1,
\\
&
\overline{\bM}_{r-t}(e_I(\pi)\biy):=\ell_{\biy}\left(
e_I(\pi)\,
\overline{\bv}_{r-t}(\pi)
\overline{\bv}_{r-t}(\pi)^\top
\right)\\
&
\hspace{7em}\succeq0\quad
\forall\, I \subset \cJ,\; |I| = 2t \leq 2r,
\\
&
\ell_{\biy}
\bigl(c_{j,i}(\pi)\pi^{\boldsymbol{\beta}}\bigr)=0
\\[-1mm]
&
\quad
\forall\,|\boldsymbol{\beta}|\leq2r-1,\quad
j\in[m],\quad j\in[\overline{k}_i].
\end{align*}
\columnbreak

\noindent
\textbf{Feasible set of~\eqref{eq:discrete_convexp}.}
\par\noindent\rule{\linewidth}{0.4pt}

\begin{align*}
    \biy(\Pi_r)
    &:=
    \bigl\{
        \biy(\bP):
        \bP\in\Pi_r
    \bigr\},\\
    \Pi_r
    &:=
    \Bigl\{
        \bP\in\cP(\cX^{2r}):
        \bP \text{ satisfies }
        \eqref{eq:constraint_sym},\\
    & \hspace{9em}    
        \eqref{eq:constraint_mar},
        \text{ and }
        \eqref{eq:constraint_psd}
    \Bigr\}.
\end{align*}
\end{multicols}

The proof proceeds in two parts, each on which establishes a set inclusion.

\textbf{First part.} We prove that every $\biy \in \biy(\Pi_r)$ is a feasible solution of~\eqref{eq:schmudgen_dgw3}. Indeed, since $\biy \in \RR^{\overline{s}(|\cJ|,2r)}$, the Riesz linear functional associated $\biy$ is well-defined, and we need to prove that $\ell_{\biy}$ satisfies all constraints in~\eqref{eq:schmudgen_dgw3}. 

To prove $\ell_{\biy}(1) =1$, we consider
\begin{displaymath}
    \ell_{\biy}(1) = \ell_{\biy}(S(\pi)^{2r}) = \ell_{\biy}\Bigl(\sum_{\boldsymbol{\alpha} \in \cJ(2r)} \pi^{\boldsymbol{\beta}(\boldsymbol{\alpha})}\Bigr) = \sum_{\boldsymbol{\alpha} \in \cJ(2r)}\ell_{\biy}(\pi^{\boldsymbol{\beta}(\boldsymbol{\alpha})}) = \sum_{\boldsymbol{\alpha} \in \cJ(2r)}\bP(\cY_{\boldsymbol{\alpha}})= \bP(\cX^{2r}) =1.
\end{displaymath}
Next, for any $I \in \cJ$ and $|I| =2t\leq 2r$, the positive semi-definiteness of $\overline{\bM}_{r-t}(e_I(\pi)\biy)$ is equivalent to 
\begin{displaymath}
    \mathbf{p}^\top \overline{\bM}_{r-t}(e_I(\pi)\biy) \mathbf{p} \,\geq\, 0 \quad \forall\, \mathbf{p} \in \RR^{\overline{s}(|\cJ|,r-t)}
\end{displaymath}
We fix $\mathbf{p} = (p_{\boldsymbol{\beta}})_{\boldsymbol{\beta} \in \NN^{|\cJ|}_{r-t}} \in \RR^{\overline{s}(|\cJ|,r-t)}$ and recall~\eqref{def: beta to alpha}, we define the following piece-wise functions 
\begin{equation*}
    \begin{aligned}
    & f(\bx_1,\dots,\bx_{r-t})= \sum_{\boldsymbol{\beta} \in \NN^{|\cJ|}_{r-t}}p_{\boldsymbol{\beta}} \cdot \chi_{\cY_{\boldsymbol{\alpha}(\boldsymbol{\beta})}}(\bx_{(1,r-t)}) \in \mathfrak{B}_b(\cX^{r-t}), \\
    & g(\bx_1,\dots,\bx_{2t}) = \chi_{\cY_{\boldsymbol{\alpha}(\boldsymbol{\beta(I)})}}(\bx_{(1,2t)}) \in \mathfrak{B}_b^+(\cX^{2t}).
\end{aligned}
\end{equation*}
According to the definition of $\overline{\bM}_{r-t}(e_I(\pi)\biy)$, we obtain 
\begin{align*}
    & \mathbf{p}^\top \overline{\bM}_{r-t}(e_I(\pi)\biy) \mathbf{p}\;=\; \ell_{\biy}\left(\mathbf{p}^\top \pi^{\boldsymbol{\beta}(I)}\,
    \overline{\bv}_{r-t}(\pi)
    \overline{\bv}_{r-t}(\pi)^\top \mathbf{p} \right)\\
    =\;& \ell_{\biy}\left(\sum_{\boldsymbol{\beta}, \boldsymbol{\beta}^\prime \in \NN^{|\cJ|}_{r-t}}p_{\boldsymbol{\beta}}p_{\boldsymbol{\beta}^\prime}\pi^{\boldsymbol{\beta}(I)+\boldsymbol{\beta}+\boldsymbol{\beta}^\prime} \right)\;=\; \sum_{\boldsymbol{\alpha}, \overline{\boldsymbol{\beta}} \in \cJ(r-t)}p_{\boldsymbol{\alpha}}p_{\boldsymbol{\alpha}^\prime}\ell_{\biy}\big(\pi^{\boldsymbol{\beta}(I)+\boldsymbol{\beta}+\boldsymbol{\beta}^\prime}\bigr).
\end{align*}
We recall that since $\biy \in \biy(\Pi_r)$, and $|\boldsymbol{\alpha}(\boldsymbol{\beta}(I))|+|\boldsymbol{\alpha}(\boldsymbol{\beta})|+|\boldsymbol{\alpha}(\boldsymbol{\beta}^\prime)|=2t+r-t+r-t =2r$, then $(\boldsymbol{\alpha}(\boldsymbol{\beta}),\boldsymbol{\alpha}(\boldsymbol{\beta}^\prime),\boldsymbol{\alpha}(\boldsymbol{\beta}(I))) \in \cJ(2r)$, and
\begin{align*}
   \ell_{\biy}\bigl(\pi^{\boldsymbol{\beta}(I)+\boldsymbol{\beta}+\boldsymbol{\beta}^\prime}\bigr) \;=\;& \bP(\cY_{(\boldsymbol{\alpha}(\boldsymbol{\beta}),\boldsymbol{\alpha}(\boldsymbol{\beta}^\prime),\boldsymbol{\alpha}(\boldsymbol{\beta}(I)),)})\\
   =\; &\int_{\cX^{2r}}\chi_{\cY_{\boldsymbol{\alpha}(\boldsymbol{\beta})}}(\bx_{(1,r-t)})\cdot \chi_{\cY_{\boldsymbol{\alpha}(\boldsymbol{\beta}^\prime)}}(\bx_{(r-t+1,2r-2t)})\cdot\chi_{\cY_{\boldsymbol{\alpha}(\boldsymbol{\beta(I)})}}(\bx_{2t-2t+1,2r})~d\bP.
\end{align*}
Hence, we have 
\begin{displaymath}
    \mathbf{p}^\top \overline{\bM}_{r-t}(e_I(\pi)\biy) \mathbf{p}\;=\; \int_{\cX^{2r}}f(\bx_{(1,r-t)})f(\bx_{(r-t+1,2r-2t)})g(\bx_{2r-2t+1,2r)})~d\bP \geq 0
\end{displaymath}
according to positive semi-definiteness of $\bP$.

The remaining properties of $\ell_{\biy}$ we need to prove is 
\begin{displaymath}
    \ell_{\biy}\bigl(c_{j,i}(\pi)\pi^{\boldsymbol{\beta}}\bigr)=0 \quad
\forall\,|\boldsymbol{\beta}|\leq2r-1,\quad
j\in[m],\quad j\in[\overline{k}_i],
\end{displaymath}
which can be derived directly from the marginal condition of $\bP$, namely,
\begin{displaymath}
     \bP\big|_{\cX_j\times\cX^{2r-1}}
    =
    \mu_k\otimes
    \bP\big|_{\cX^{2r-1}}
    \qquad
    \forall\,j\in[m].
\end{displaymath}
This completes the first part of the proof.

\textbf{Second part.} We prove that any for any feasible solution $\biy$ of~\eqref{eq:schmudgen_dgw3}, there exists a $\bP \in \Pi_r$ such that $\biy = \biy(\bP)$. Indeed, we define the measure 
\begin{equation*}
    \bP(\biy) = \sum_{\boldsymbol{\alpha} \in \cJ(2r)}\frac{\ell_{\biy}(\pi^{\boldsymbol{\beta}(\boldsymbol{\alpha})})}{\mu^{\otimes 2r}(\cY_{\boldsymbol{\alpha}})}\cdot \mu^{\otimes 2r}\big|_{\cY_{\boldsymbol{\alpha}}},
\end{equation*}
where
\begin{equation*}
    \mu^{\otimes 2r} = (\mu_1 \otimes \dots \otimes \mu_m) \otimes \dots \otimes (\mu_1 \otimes \dots \otimes \mu_m) \in \cP(\cX^{2r}).
\end{equation*}
The set $\Pi_r$ is specified as the intersection of the conditions \eqref{eq:constraint_sym}, \eqref{eq:constraint_mar}, and \eqref{eq:constraint_psd}.  We check that $\bP(\biy)$ each of these.

[Symmetry]:  This follows from the fact that both $\biy$ and $\mu^{\otimes 2r}$ are symmetric, namely, 
\begin{displaymath}
    \mu^{\otimes 2r}(C) = \mu^{\otimes 2r}(\sigma(C)), \quad \boldsymbol{\beta}(\sigma(\boldsymbol{\alpha})) = \boldsymbol{\beta}(\boldsymbol{\alpha}), \quad \forall\, \sigma \in S_{2r},\, C \in \mathcal{B}(\cX^{2r}).
\end{displaymath}

[Marginal condition]:  This entails checking that the following holds for all $j \in [m]$:
\begin{equation*}
    \bP(\biy)\big|_{\cX_j \times \cX^{2r-1}} = \mu_j \otimes \bP(\biy)\big|_{\cX^{2r-1}}. 
\end{equation*}
To do so, we make the following evaluations 
\begin{displaymath}
    \bP(\biy)\big|_{\cX_j \times \cX^{2r-1}} = \sum_{\boldsymbol{\alpha} \in \cJ(2r)}\frac{\ell_{\biy}(\pi^{\boldsymbol{\beta}(\boldsymbol{\alpha})})}{\mu^{\otimes 2r}(\cY_{\boldsymbol{\alpha}})}\cdot (\mu^{\otimes 2r}\big|_{\cY_{\boldsymbol{\alpha}}})\big|_{\cX_j \times \cX^{2r-1}}
\end{displaymath}
Recall that for any $\boldsymbol{\alpha} \in \cJ(2r)$, one can write $\boldsymbol{\alpha} = (\alpha_1,\dots,\alpha_{2r})$ with $\alpha_j \in \cJ \; \forall\, j \in [m]$, and recall the projections $\mathrm{pr}_j:\; \cX \to \cX_j$ to have 
\begin{align*}
    (\mu^{\otimes 2r}\big|_{\cY_{\boldsymbol{\alpha}}})\big|_{\cX_j \times \cX^{2r-1}} =&
    \Bigl(\prod_{j \in [m],\; i \neq j}\mu_i(\mathrm{pr}_i(\cY_{\alpha_{1}}))\Bigr)\cdot \mu_j\big|_{\mathrm{pr}_j(\cY_{\alpha_{1}})}\otimes \mu^{2r-1}\big|_{\cY_{\alpha_{(2.2r)}}} \\
    \mu^{\otimes 2r}(\cY_{\boldsymbol{\alpha}}) =&  \left(\prod_{j=1}^m \mu_j(\mathrm{pr}_j(\cY_{\alpha_1}))\right) \cdot \mu^{2r-1}(\cY_{\boldsymbol{\alpha}}).
\end{align*}
We then have 
\begin{align*}
    \bP(\biy)\big|_{\cX_j \times \cX^{2r-1}} =& \sum_{\boldsymbol{\alpha} \in \cJ(2r)}\frac{\ell_{\biy}(\pi^{\boldsymbol{\beta}(\boldsymbol{\alpha})})}{\mu_j(\mathrm{pr}_j(\cY_{\alpha_{1}}))\cdot\mu^{2r-1}(\cY_{\boldsymbol{\alpha}})}\cdot\mu_j\big|_{\mathrm{pr}_j(\cY_{\alpha_{1}})}\otimes \mu^{2r-1}\big|_{\cY_{\alpha_{(2.2r)}}}\\
    =& \sum_{\boldsymbol{\alpha} \in \cJ(2r-1)}\sum_{\alpha_{1} \in \cJ}
    \frac{\ell_{\biy} (\pi^{\boldsymbol{\beta}(\alpha_{1})+\boldsymbol{\beta}(\boldsymbol{\alpha})})}{\mu_j(\mathrm{pr}_j(\cY_{\alpha_{1}}))\cdot\mu^{2r-1}(\cY_{\boldsymbol{\alpha}})}
    \cdot \mu_j\big|_{\mathrm{pr}_j(\cY_{\alpha_{1}})}\otimes \mu^{2r-1}\big|_{\cY_{\boldsymbol{\alpha}}},
\end{align*}
and for fixed $i \in [\overline{k}_j]$, we take all the $\alpha_1 \in \cJ$ such that $\mathrm{pr}_j(\cY_{\alpha_1}) = \cY_{i}$, then we obtain 
\begin{displaymath}
    \sum_{\substack{\alpha_1 \in \cJ\\ \mathrm{pr}_j(\cY_{\alpha_1}) = \cY_{i}}} \ell_{\biy} (\pi^{\boldsymbol{\beta}(\alpha_{1})+\boldsymbol{\beta}(\boldsymbol{\alpha})})
    =  \ell_{\biy} \left(\left(\sum_{\substack{\alpha_1 \in \cJ\\ \mathrm{pr}_j(\cY_{\alpha_1}) = \cY_{i}}}\pi^{\boldsymbol{\beta}(\alpha_1)} \right)\pi^{\boldsymbol{\beta}(\boldsymbol{\alpha})}\right)
    = \mu_j(\cY_{j,i}) \cdot \ell_{\biy}(\pi^{\boldsymbol{\beta}(\boldsymbol{\alpha})}).
\end{displaymath}
Hence, 
\begin{align*}
    \bP(\biy)\big|_{\cX_j \times \cX^{2r-1}} =&
    \sum_{\boldsymbol{\alpha} \in \cJ(2r-1)}\sum_{i \in [\overline{k}_j]}
    \frac{\mu_j(\cY_{j,i}) \cdot \ell_{\biy}(\pi^{\boldsymbol{\beta}(\boldsymbol{\alpha})})}{\mu_j(\cY_{j,i})\cdot\mu^{2r-1}(\cY_{\boldsymbol{\alpha}})}
    \cdot \mu_j\big|_{\cY_{j,i}}\otimes \mu^{2r-1}\big|_{\cY_{\boldsymbol{\alpha}}}\\
    =&\sum_{\boldsymbol{\alpha} \in \cJ(2r-1)}\sum_{i \in [\overline{k}_j]}
    \frac{\ell_{\biy}(\pi^{\boldsymbol{\beta}(\boldsymbol{\alpha})})}{\mu^{2r-1}(\cY_{\boldsymbol{\alpha}})}
    \cdot \mu_j\big|_{\cY_{j,i}}\otimes \mu^{2r-1}\big|_{\cY_{\boldsymbol{\alpha}}}\\
    =&\left(\sum_{i \in [\overline{k}_j]} \mu_j\big|_{\cY_{j,i}} \right) \otimes \left(\sum_{\boldsymbol{\alpha} \in \cJ(2r-1)} \frac{\ell_{\biy}(\pi^{\boldsymbol{\beta}(\boldsymbol{\alpha})})}{\mu^{2r-1}(\cY_{\boldsymbol{\alpha}})}\cdot\mu^{2r-1}\big|_{\cY_{\boldsymbol{\alpha}}} \right)\\
    =& \mu_j \otimes \left(\sum_{\boldsymbol{\alpha} \in \cJ(2r-1)} \frac{\ell_{\biy}(\pi^{\boldsymbol{\beta}(\boldsymbol{\alpha})})}{\mu^{2r-1}(\cY_{\boldsymbol{\alpha}})}\cdot\mu^{2r-1}\big|_{\cY_{\boldsymbol{\alpha}}} \right).
\end{align*}
We also note that from the definition of Riesz linear functional, the following holds
\begin{equation*}
     \sum_{ \alpha_1 \in \cJ}\ell_{\biy}(\pi^{\boldsymbol{\beta}(\alpha_1)+\boldsymbol{\beta}(\boldsymbol{\alpha})}) = \ell_{\biy}(S(\pi)\cdot\pi^{\boldsymbol{\beta}(\boldsymbol{\alpha})}) = \ell_{\biy}(\pi^{\boldsymbol{\alpha}}),
\end{equation*}
which implies that
\begin{align*}
    \bP(\biy)\big|_{\cX^{2r-1}} 
    &= \sum_{\boldsymbol{\alpha} \in \cJ(2r-1)}
    \sum_{\alpha_1 \in \cJ}\frac{\ell_{\biy}(\pi^{\boldsymbol{\beta}(\boldsymbol{\alpha})+\boldsymbol{\beta}(\alpha_1)})}
    {\mu(\cY_{\alpha_1})\cdot\mu^{\otimes 2r-1}(\cY_{\boldsymbol{\alpha}})}\cdot \mu_j(\cY_{\alpha_1})\cdot(\mu^{\otimes 2r-1}\big|_{\cY_{\boldsymbol{\alpha}}})\big|_{\cX^{2r-1}}\\
    &= \sum_{\boldsymbol{\alpha} \in \cJ(2r-1)} \frac{\ell_{\biy}(\pi^{\boldsymbol{\beta}(\boldsymbol{\alpha})})}{\mu^{2r-1}(\cY_{\boldsymbol{\alpha}})}\cdot\mu^{2r-1}\big|_{\cY_{\boldsymbol{\alpha}}}.
\end{align*}
This verifies the marginal constraint~\eqref{eq:constraint_mar}.

[Checking positive semidefinite-ness]: Checking the PSD condition entails showing that the following integral is non-negative for all $0 \leq t \leq r$, all $g \in \mathfrak{B}_b^+(\cX^{2t})$ and $f \in \mathfrak{B}_b(\cX^{r-t})$
\begin{equation*}
\mathrm{B}_{t,g}(f,f)=\int_{\cX^{2r}}f(\bx_{(1:r-t)})f(\bx_{(r-t+1:2r-2t)})g(\bx_{(2r-2t+1,2r)})~d\bP \geq 0.    
\end{equation*}
Following the definition of $\bP$, we can evaluate the integral $\mathrm{B}_{t,g}(f,f)$ as a sum as follows: 
\begin{align*}
    \quad \mathrm{B}_{t,g}(f,f)=&\quad 
    \sum_{\boldsymbol{\alpha} \in \cJ(2r)}
    \int_{\cY_{\boldsymbol{\alpha}}}
    f(\bx_{(1:r-t)})f(\bx_{(r-t+1:2r-2t)})g(\bx_{(2r-2t+1,2r)})~d\bP\big|_{\cY_{\boldsymbol{\alpha}}} \\
    =&\quad 
    \sum_{\substack{\boldsymbol{\alpha},\boldsymbol{\alpha}^\prime \in \cJ(r-t) \\ \boldsymbol{\alpha}^{\prime \prime} \in \cJ(2t)}}
    \frac{\ell_{\biy}(\pi^{\boldsymbol{\beta}(\boldsymbol{\alpha},\boldsymbol{\alpha}^\prime,\boldsymbol{\alpha}^{\prime \prime})})}
    {\mu^{\otimes r-t}(\cY_{\boldsymbol{\alpha}}) \cdot \mu^{\otimes r-t}(\cY_{\boldsymbol{\alpha}^\prime})\cdot \mu^{\otimes 2t}(\cY_{\boldsymbol{\alpha}^{\prime \prime}})}\cdot\\
    &\hspace{6em}\int_{\cY_{(\boldsymbol{\alpha},\boldsymbol{\alpha}^\prime,\boldsymbol{\alpha}^{\prime \prime})}}
    f(\bx_{(1:r-t)})f(\bx_{(r-t+1:2r-2t)})g(\bx_{(2r-2t+1,2r)})~d\mu^{\otimes 2r}\\
    =&\quad
    \sum_{\boldsymbol{\alpha}^{\prime \prime} \in \cJ(2t)}
    \frac{\int_{\cY_{\boldsymbol{\alpha}^{\prime \prime}}}
    g(\bx_{(1:2t)})~d\mu^{\otimes 2t}}{\mu^{\otimes 2t}(\cY_\alpha)} \cdot\\
    &\hspace{6em}
    \sum_{\boldsymbol{\alpha}, \boldsymbol{\alpha}^\prime \in \cJ(r-t)}
    \ell_{\biy}(\pi^{\boldsymbol{\beta}(\boldsymbol{\alpha},\boldsymbol{\alpha}^\prime,\boldsymbol{\alpha}^{\prime \prime})})\cdot \frac{\int_{\cY_{\boldsymbol{\alpha}}}f~d\mu^{\otimes r-t}}{\mu^{\otimes r-t}(\cY_{\boldsymbol{\alpha}})}\frac{\int_{\cY_{\boldsymbol{\alpha}^\prime}}f~d\mu^{\otimes r-t}}{\mu^{\otimes r-t}(\cY_{\boldsymbol{\alpha}^\prime})}\\
    =&\quad
    \sum_{\boldsymbol{\alpha}^{\prime \prime} \in \cJ(2t)}
    \frac{\int_{\cY_{\boldsymbol{\alpha}^{\prime \prime}}}
    g(\bx_{(1:2t)})~d\mu^{\otimes 2t}}{\mu^{\otimes 2t}(\cY_\alpha)} \cdot \mathbf{p}^\top \bM_{r-t}(\pi^{\boldsymbol{\beta}(\boldsymbol{\alpha}^{\prime \prime})}\biy)\mathbf{p}, 
\end{align*}
where 
\begin{displaymath}
    \mathbf{p} = (p_{\boldsymbol{\beta}})_{\boldsymbol{\beta} \in \NN^{|\cJ|}_{r-t}},\qquad p_{\boldsymbol{\beta}} :=\sum_{\substack{\boldsymbol{\alpha} \in \cJ(r-t)\\ \boldsymbol{\beta}(\boldsymbol{\alpha})=\boldsymbol{\beta}}} \frac{\int_{\cY_{\boldsymbol{\alpha}}}f~d\mu^{\otimes r-t}}{\mu^{\otimes r-t}(\cY_{\boldsymbol{\alpha}})} \quad \forall\, \boldsymbol{\beta} \in \NN^{|\cJ|}_{r-t}.
\end{displaymath}
As a result, $\mathrm{B}_{t,g}(f,f) \geq 0$ follows from the two facts. First, the matrix 
\begin{displaymath}
    \bM_{r-t}(\pi^{\boldsymbol{\beta}(\boldsymbol{\alpha}^{\prime \prime})}\biy) \succeq 0 \quad \Rightarrow \quad \mathbf{p}^\top \bM_{r-t}(\pi^{\boldsymbol{\beta}(\boldsymbol{\alpha}^{\prime \prime})}\biy)\mathbf{p} \geq 0 \qquad \forall\, \boldsymbol{\alpha}^{\prime \prime} \in \cJ(2t).
\end{displaymath}
Second, since $g \in \mathfrak{B}_b^+(\cX^{2t})$, the integral 
\begin{displaymath}
    \frac{\int_{\cY_{\boldsymbol{\alpha}^{\prime \prime}}}
    g(\bx_{(1:2t)})~d\mu^{\otimes 2t}}{\mu^{\otimes 2t}(\cY_\alpha)}  \;\geq\; 0 \qquad \forall\, \boldsymbol{\alpha}^{\prime \prime} \in \cJ(2t).
\end{displaymath}
This establishes \eqref{eq:constraint_psd}, which completes the proof.
\end{proof}
The conclusion of this step is summarized in the following diagram, in which we observe that the convergence is verified in discretized problems. 
\begin{center}
\begin{tikzcd}[column sep=4cm, row sep=huge]
\rho^{(r)} \arrow[r, "\text{r-th relaxation } "] \arrow[d, "\text{$(\varepsilon,\delta)$--discretization }"'] & \rho^* \arrow[d, "\text{$(\varepsilon,\delta)$--discretization } "] \\
\rho^{(r)}(\varepsilon,\delta) \arrow[r, "\text{$\lim_{r \to \infty}\rho^{(r)}(\varepsilon,\delta)=\rho(\varepsilon,\delta)$}"] & \rho(\varepsilon,\delta)
\end{tikzcd}  
\end{center}

\subsection{Step 3: Convergence of\texorpdfstring{~\eqref{eq:ot-r-th}}{(1)} to\texorpdfstring{~\eqref{eq:ot_quadform}}{(1)}}
Our next result controls how closed the discretized problems~\eqref{eq:discrete_qp} and~\eqref{eq:discrete_convexp} approximate~\eqref{eq:ot_quadform} and~\eqref{eq:ot-r-th} respectively by parameters $\varepsilon$ and $\delta$. These tightness are presented in the following lemmas.

\begin{lemma}\label{lemma: discrete of rho}
Given positive $\varepsilon,\delta>0$, we have the following inequality:
\begin{equation}\label{eq:tightness of discrete qp}
|\ropt - \rho(\varepsilon,\delta) | \leq L(\varepsilon)d^{(2)}(\delta) + \varepsilon.
\end{equation}
\end{lemma}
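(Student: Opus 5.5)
We prove the two inequalities $\rho(\varepsilon,\delta)\le \ropt + L(\varepsilon)d^{(2)}(\delta)+\varepsilon$ and $\ropt \le \rho(\varepsilon,\delta)+L(\varepsilon)d^{(2)}(\delta)+\varepsilon$ separately. In both we pass between continuous couplings $\pi\in\Pi(\mu_1,\ldots,\mu_m)$ and discrete couplings in $\Delta(\varepsilon,\delta)$, and we control the change in objective by the $r=1$ case of \eqref{eq:discrete_approx_err}. That case applies to measures of the form $\pi\otimes\pi$, since $\pi^{\otimes 2}\in\Pi_1$ by the argument of Proposition~\ref{thm:simpleLB}, so Assumption~\ref{assume:tightness} covers it.

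\textbf{Upper bound on $\rho(\varepsilon,\delta)$.} Take any $\pi\in\Pi(\mu_1,\ldots,\mu_m)$ and set $\pi_\alpha:=\pi(\cY_\alpha)$ for $\alpha\in\cJ$. The cells $\cY_\alpha$ are products of the marginal cells $\cY_{j,i}$. Summing $\pi_\alpha$ over all indices except the $j$-th therefore gives $\pi(\mathrm{pr}_j^{-1}(\cY_{j,i}))=\mu_j(\cY_{j,i})$, so $(\pi_\alpha)\in\Delta(\varepsilon,\delta)$. The discrete objective $\langle\bc,(\pi_\alpha)\otimes(\pi_\alpha)\rangle$ equals $\sum_{\balpha\in\cI(2)}c(\bx_{\balpha})(\pi\otimes\pi)_{\balpha}$. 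By \eqref{eq:discrete_approx_err} with $\bP=\pi\otimes\pi$, this differs from $\int c\,d(\pi\otimes\pi)$ by at most $L(\varepsilon)d^{(2)}(\delta)+\varepsilon$. Taking the infimum over $\pi$ gives the bound.

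\textbf{Upper bound on $\ropt$ (the lifting step).} Let $\pi^d\in\Delta(\varepsilon,\delta)$ be optimal for \eqref{eq:discrete_qp}. The minimum exists because the feasible set is a compact polytope and is nonempty. We lift it to a continuous coupling in the same way as the second part of the proof of Theorem~\ref{thm: connnection between relaxations}:
\[
\pi:=\sum_{\alpha\in\cJ}\frac{\pi^d_\alpha}{\mu(\cY_\alpha)}\,\mu\big|_{\cY_\alpha},\qquad \mu:=\mu_1\otimes\cdots\otimes\mu_m ,
\]
with the convention that terms with $\mu(\cY_\alpha)=0$ are dropped. Since $\mu(\cY_\alpha)=\prod_j\mu_j(\cY_{j,\alpha_j})$, the $j$-th marginal of $\pi$ is $\sum_i \big(\sum_{\alpha:\alpha_j=i}\pi^d_\alpha\big)\mu_j|_{\cY_{j,i}}/\mu_j(\cY_{j,i})=\mu_j$, using the marginal constraints of $\Delta(\varepsilon,\delta)$. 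Hence $\pi\in\Pi(\mu_1,\ldots,\mu_m)$. Moreover $\pi(\cY_\alpha)=\pi^d_\alpha$, so the discretized objective of $\pi\otimes\pi$ equals $\rho(\varepsilon,\delta)$. Applying \eqref{eq:discrete_approx_err} again gives $\ropt\le\int c\,d(\pi\otimes\pi)\le\rho(\varepsilon,\delta)+L(\varepsilon)d^{(2)}(\delta)+\varepsilon$.

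\textbf{Main obstacle.} The main obstacle is the lifting step, specifically the degenerate cells. If $\pi^d_\alpha>0$ while $\mu(\cY_\alpha)=0$, then some $\mu_j(\cY_{j,\alpha_j})=0$, and the marginal constraint forces $\pi^d_\alpha=0$. So these cells can be discarded consistently, and the marginal computation goes through. One also has to check that \eqref{eq:discrete_approx_err} legitimately applies to the lifted product measure $\pi\otimes\pi$, which is exactly where the uniform tightness over $\Pi_1$ is used.
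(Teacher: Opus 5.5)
Your proposal is correct and follows essentially the same route as the paper. To get $\rho(\varepsilon,\delta)\le\ropt+L(\varepsilon)d^{(2)}(\delta)+\varepsilon$, you project a continuous coupling onto its cell masses $\pi(\cY_\alpha)$. For the reverse bound, you lift a discrete coupling through the normalized restrictions $\mu|_{\cY_\alpha}/\mu(\cY_\alpha)$ of $\mu=\mu_1\otimes\cdots\otimes\mu_m$, with the error in both directions controlled by the Lipschitz bound on $\cY^2$ plus tightness. Your explicit treatment of cells with $\mu(\cY_\alpha)=0$ (where the marginal constraints force $\pi^d_\alpha=0$) and your check that the tightness assumption covers $\pi\otimes\pi\in\Pi_1$ fill in details the paper leaves implicit.
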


\begin{proof}
For any $\pi \in \Pi$, we define $\overline{\pi} = (\pi_{\alpha} := \pi(\cY_{\alpha}))_{\alpha \in \cJ}$.  Then by a straightforward calculation, one can show that the marginal conditions on $\pi$ imply $\overline{\pi}$ belongs to $\Delta(\varepsilon,\delta)$.  Then using the assumption that $c$ is $L(\varepsilon)$-Lipschitz continuous over $\cY$ as well as the uniform tightness assumption from Assumption \ref{assume:tightness}, we have the following inequalities
\begin{equation*}
\begin{aligned}
& \Biggl| \int_{\cX^2}c(\bx_1,\bx_2)~d\pi(\bx_1)d\pi(\bx_2) - \langle c(\varepsilon,\delta), \overline{\pi} \otimes \overline{\pi} \rangle \Biggr|  \\
\leq~ & \Biggl| \int_{\cX^2}c(\bx_1,\bx_2)~d\pi(\bx_1)d\pi(\bx_2) - \int_{\cY^2}c(\bx_1,\bx_2)~d\pi(\bx_1)d\pi(\bx_2) \Biggr|+ \\
& \qquad \qquad \qquad \qquad  \Biggl| \int_{\cY^2}c(\bx_1,\bx_2)~d\pi(\bx_1)d\pi(\bx_2) - \langle c(\varepsilon,\delta), \overline{\pi} \otimes \overline{\pi} \rangle \Biggr| \\
\leq~ & L(\varepsilon)d^{(2)}(\delta) + \varepsilon.
\end{aligned}
\end{equation*}
Hence, we have 
\begin{displaymath}
    \int_{\cX^2}c(\bx_1,\bx_2)~d\pi(\bx_1)d\pi(\bx_2) +  L(\varepsilon)d^{(2)}(\delta) + \varepsilon \geq 
    \langle c(\varepsilon,\delta), \overline{\pi} \otimes \overline{\pi} \rangle,
\end{displaymath}
and we take infimum of the left hand side over $\pi \in \Pi$ to obtain 
\begin{equation}\label{eq:discrete of rho 1}
    \rho + L(\varepsilon)d^{(2)}(\delta) + \varepsilon \geq \rho(\varepsilon,\delta).
\end{equation}

Next, for any $\overline{\pi} = (\pi_{\alpha} := \pi(\cY_{\alpha}))_{\alpha \in \cJ} \in \Delta(\varepsilon,\delta)$, we define 
\begin{displaymath}
    \pi = \sum_{\alpha \in \cJ} \frac{\overline{\pi}_{\alpha}}{\mu(\cY_\alpha)}\cdot \mu\big|_{\cY_{\alpha}}.
\end{displaymath}
By direct calculation, we can prove that $\pi \in \Pi$ similarly to the proof for marginal condition in Theorem~\ref{thm: connnection between relaxations}. Moreover, we also have 
\begin{align*}
&\Bigl| \int_{\cX^2}c(\bx_1,\bx_2)~d\pi(\bx_1)d\pi(\bx_2) - \langle c(\varepsilon,\delta), \overline{\pi} \otimes \overline{\pi} \rangle \Bigr| \leq L(\varepsilon)d^{(2)}(\delta) + \varepsilon\\
\Rightarrow\quad & \langle c(\varepsilon,\delta), \overline{\pi} \otimes \overline{\pi} \rangle + L(\varepsilon)d^{(2)}(\delta) + \varepsilon \geq \int_{\cX^2}c(\bx_1,\bx_2)~d\pi(\bx_1)d\pi(\bx_2),
\end{align*}
for which we take infimum of the left hand side over $\overline{\pi} \in \Delta(\varepsilon,\delta)$ to obtain that 
\begin{equation}\label{eq:discrete of rho 2}
    \rho(\varepsilon,\delta) + L(\varepsilon)d^{(2)}(\delta) + \varepsilon \geq \rho.
\end{equation}
The lemma follows from~\eqref{eq:discrete of rho 1} and~\eqref{eq:discrete of rho 2}.
\end{proof}

\begin{lemma}\label{lemma: discrete of rho_r}
Given positive $\varepsilon,\delta>0$, we have the following inequality:
\begin{equation}\label{eq:tightness of discrete convexp}
|\rho^{(r)} - \rho^{(r)}(\varepsilon,\delta) | \leq L(\varepsilon)d^{(2)}(\delta) + \varepsilon.
\end{equation}
\end{lemma}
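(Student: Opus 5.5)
We mirror the proof of Lemma~\ref{lemma: discrete of rho}, with $\Pi_r$ in place of $\Pi$ and the map $\bP\mapsto\biy(\bP)$ in place of $\pi\mapsto\overline{\pi}$. The argument has two inequalities. The bridge between the two problems is the inequality \eqref{eq:discrete_approx_err}, which holds uniformly over all $\bP\in\Pi_r$ and all $r$. It says that the continuous objective $c(\bP)$ and the discretized objective $\langle \bc(\varepsilon,\delta),\biy(\bP)\rangle=\sum_{\balpha\in\cI(2r)} c(\bx_{\balpha,(1,2)})\bP_{\balpha}$ differ by at most $L(\varepsilon)d^{(2)}(\delta)+\varepsilon$.

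\textbf{Lower bound on $\rho^{(r)}$.} Take any $\bP\in\Pi_r$. By definition of \eqref{eq:discrete_convexp}, $\biy(\bP)$ is feasible there, so $\rho^{(r)}(\varepsilon,\delta)\le \langle \bc(\varepsilon,\delta),\biy(\bP)\rangle$. Combining this with \eqref{eq:discrete_approx_err} gives
\[
\rho^{(r)}(\varepsilon,\delta)\le c(\bP)+L(\varepsilon)d^{(2)}(\delta)+\varepsilon .
\]
Taking the infimum over $\bP\in\Pi_r$ yields $\rho^{(r)}(\varepsilon,\delta)\le\rho^{(r)}+L(\varepsilon)d^{(2)}(\delta)+\varepsilon$. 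This direction is immediate because \eqref{eq:discrete_convexp} is defined as an image of $\Pi_r$.

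\textbf{Upper bound on $\rho^{(r)}$.} Take any feasible $\biy$ for \eqref{eq:discrete_convexp}; we must produce some $\bP\in\Pi_r$ whose continuous objective is close to $\langle\bc(\varepsilon,\delta),\biy\rangle$. Since the feasible set is $\biy(\Pi_r)$, there is by definition some $\bP\in\Pi_r$ with $\biy=\biy(\bP)$. Applying \eqref{eq:discrete_approx_err} to this $\bP$ gives $\rho^{(r)}\le c(\bP)\le\langle\bc(\varepsilon,\delta),\biy\rangle+L(\varepsilon)d^{(2)}(\delta)+\varepsilon$, and taking the infimum over $\biy$ finishes this direction.

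If one prefers to work with the SDP description \eqref{eq:schmudgen_dgw3}, the lifting is less trivial. In that case I would use Theorem~\ref{thm: connnection between relaxations}, in particular the explicit reconstruction
\[
\bP(\biy)=\sum_{\balpha\in\cJ(2r)} \frac{\ell_{\biy}(\pi^{\boldsymbol\beta(\balpha)})}{\mu^{\otimes 2r}(\cY_{\balpha})}\,\mu^{\otimes 2r}\big|_{\cY_{\balpha}} ,
\]
which lies in $\Pi_r$ and satisfies $\bP(\biy)(\cY_{\balpha})=y_{\boldsymbol\beta(\balpha)}$. Hence $\biy(\bP(\biy))=\biy$, and \eqref{eq:discrete_approx_err} applies as above.

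\textbf{Main obstacle.} The point needing care is the error bound \eqref{eq:discrete_approx_err} itself. Its tightness part rests on Assumption~\ref{assume:tightness}, which gives uniformity in $\bP\in\Pi_r$ and in $r$. However, the discrete sum ranges only over $\cI(2r)$, the cells of $\cY^{2r}$, while the cells with leftover indices are dropped. The tail to control is therefore the mass on which the first two coordinates leave $\cY^2$; the remaining coordinates are integrated out, so only the first two blocks matter. I would verify that the bound is applied with this convention, namely with the objective depending only on $(\bx_1,\bx_2)$ and the cells grouped by their first two indices. With the Lipschitz estimate on $\cY(\varepsilon)^2$ and tightness on the complement, both directions then give the claimed $L(\varepsilon)d^{(2)}(\delta)+\varepsilon$.
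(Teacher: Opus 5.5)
Your proposal is correct and follows the paper's proof. Both directions come from the uniform bound \eqref{eq:discrete_approx_err}: $\bP\mapsto\biy(\bP)$ gives one inequality, and a preimage of each feasible $\biy$ in $\Pi_r$ gives the other. The paper takes that preimage to be the reconstruction $\bP(\biy)$ of Theorem~\ref{thm: connnection between relaxations}, whereas you observe that it exists directly from the definition of $\biy(\Pi_r)$. Your caveat about \eqref{eq:discrete_approx_err} is also well taken: tightness on the first two blocks controls the error only if the discrete objective groups cells by their first two indices (summing over $\cI(2)\times\cJ(2r-2)$, as the objective of \eqref{eq:schmudgen_dgw3} effectively does), rather than over $\cI(2r)$ as literally written.
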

\begin{proof}
    The idea to prove this lemma is similar to Lemma~\eqref{lemma: discrete of rho}. First, for any $\bP \in \Pi_r$, we recall that $\biy(\bP)$ is a feasible solution of~\eqref{eq:discrete_convexp}, and the inequality~\eqref{eq:discrete_approx_err} shows that 
    \begin{displaymath}
        \left|\int_{\cX^{2r}}c(\bx_{(1,2)})~d\bP- \sum_{\boldsymbol{\alpha} \in \cI(2r)}c(\bx_{\alpha,(1,2)})\bP_{\boldsymbol{\alpha}}\right| \leq L(\varepsilon)d^{(2)}(\delta) + \varepsilon,
    \end{displaymath}
    where $\sum_{\boldsymbol{\alpha} \in \cI(2r)}c(\bx_{\alpha,(1,2)})\bP_{\boldsymbol{\alpha}} = \langle \bc(\varepsilon,\delta),\biy(\bP)\rangle$, Therefore, we have 
    \begin{displaymath}
        \int_{\cX^{2r}}c(\bx_{(1,2)})~d\bP + L(\varepsilon)d^{(2)}(\delta) + \varepsilon \geq \langle \bc(\varepsilon,\delta),\biy(\bP)\rangle,
    \end{displaymath}
    in which we take infimum of the left hand side over $\bP \in \Pi_r$ to obtain that 
    \begin{equation}\label{eq:discrete of rho_r1}
        \rho^{(r)} + L(\varepsilon)d^{(2)}(\delta) + \varepsilon \geq \rho^{(r)}(\varepsilon,\delta)
    \end{equation}

    Second, for any $\biy \in \biy(\Pi_r)$, Theorem~\ref{thm: connnection between relaxations} shows that $\bP(\biy) \in \Pi_r$, which combine with the inequality~\eqref{eq:discrete_approx_err} to induce that 
    \begin{displaymath}
        \varepsilon \geq \langle \bc(\varepsilon,\delta),\biy\rangle + L(\varepsilon)d^{(2)}(\delta) + \varepsilon \geq \int_{\cX^{2r}}c(\bx_{(1,2)})~d\bP(\biy),
    \end{displaymath}
    and take infimum of the left hand side over $\biy \in \biy(\Pi)$, the following inequality holds 
    \begin{equation}\label{eq:discrete of rho_r2}
        \rho^{(r)}(\varepsilon,\delta) + L(\varepsilon)d^{(2)}(\delta) + \varepsilon \geq \rho^{(r)}.
    \end{equation}
    As a result, the lemma follows from~\eqref{eq:discrete of rho_r1} and~\eqref{eq:discrete of rho_r2}.
\end{proof}

We now have all ingredients to prove Theorem \ref{thm:convergence_tightness}.
\begin{proof} [Proof of Theorem \ref{thm:convergence_tightness}]
 From Lemma~\ref{lemma: discrete of rho} and Lemma~\ref{lemma: discrete of rho_r} we have the estimate
\begin{equation*}
    |\ropt - \rho(\varepsilon,\delta) | \leq L(\varepsilon)d^{(2)}(\delta) + \varepsilon,
    \quad \mbox{and} \quad
    |\rho^{(r)} - \rho^{(r)}(\varepsilon,\delta)| \leq L(\varepsilon)d^{(2)}(\delta) + \varepsilon.
\end{equation*}
By combining the above bound with the triangle inequality we have
\begin{equation*}
\begin{aligned}
    \ropt - \rho^{(r)} ~\leq~ & |\ropt - \rho(\varepsilon,\delta)| + |\rho(\varepsilon,\delta) -\rho^{(r)}(\varepsilon,\delta)| + |\rho^{(r)}(\varepsilon,\delta) -\rho^{(r)}| \\
    \leq~ & 2(L(\varepsilon)d^{(2)}(\delta) + \varepsilon) + |\rho(\varepsilon,\delta) - \rho^{(r)}(\varepsilon,\delta)|.
\end{aligned}
\end{equation*}
Then, by taking the limit $r \rightarrow \infty$, we obtain
\begin{equation*}
    \ropt - \lim_{r \to \infty}\rho^{(r)} ~\leq~ 2(L(\varepsilon)d^{(2)}(\delta)+\varepsilon) + \lim_{r \to \infty} |\rho(\varepsilon,\delta) - \rho^{(r)}(\varepsilon,\delta)| ~ =~ 2(L(\varepsilon)d^{(2)}(\delta)+\varepsilon).
\end{equation*}
Note that the LHS is independent of $\varepsilon$ and $\delta$.  To conclude the result, we show that it is possible to pick a sequence $\{ (\varepsilon_j, \delta_j) \}_{j=1}^{\infty}$ such that the RHS vanishes to zero.  Suppose we pick  $\varepsilon_j = 1/2^j$.  For each $j$, we make a choice of $\mathcal{Y}(\varepsilon_j)$ satisfying the tightness condition \eqref{eq: tightness}, as described earlier.  Each choice of $\mathcal{Y}(\varepsilon_j)$ induces a corresponding Lipschitz bound $L(\varepsilon_j)$.  Now we pick $\delta_j$ sufficiently small so that $d^{(2)} (\delta_j) \leq 1/(2^j L(\varepsilon_j))$.  We then have that the RHS is at most $2(L(\varepsilon_j)d^{(2)}(\delta_j)+\varepsilon_j) \leq 1/2^{j-2}$, which vanishes to zero as we take $j \rightarrow \infty$.  This implies $\lim_{r \to \infty}\rho^{(r)} = \ropt$.
\end{proof}

\section{Moment-type SDP relaxation}\label{sec: moment_hierarchy}

We return our focus to the optimization instance \eqref{eq:ot-r-th}.  As a reminder, this takes place over the space of probability measures.  In this section, we describe a framework for computing the optimal value to \eqref{eq:ot-r-th} via {\em sequences of moments} associated with the problem.  Our approach builds on the ideas developed in \cite{MN:24} in which the moment-SOS hierarchy was applied to problems in optimal transport.  In particular, through our discussion, we will describe a hierarchy of finite dimensional semidefinite programming relaxations whose optimal solution increasingly approximate that of \eqref{eq:ot-r-th}.  

The development of our moment-SOS hierarchy will require  additional assumptions to the formulation \eqref{eq:ot-r-th}.  First, it is {\em essential} that the metric spaces $\cX_j$ are Euclidean.  As a note, although we assume that the $\cX_j$'s are Euclidean from the beginning of the paper, much of the development in the previous section extends to general metric spaces, and the corresponding analysis has been written to reflect this point.  The framework we introduce in the current section, however, relies on $\cX_j$ being Euclidean in a critical way.  Second, we assume that the cost function $c(\bx_1,\bx_2)$ is a {\em polynomial} function in the input variables $\bx_1,\bx_2$.  These assumptions allow us to express the optimization problem in terms of moment sequences.  In the remainder of this section, we use the notation $\bx_i = (\bx_{i,1},\dots,\bx_{i,m}) \in \cX$, where $\bx_{i,j} \in \cX_j \; \forall j \in [m]$.

We briefly mention a practical example where the above-stated assumptions do hold.  Consider the task of computing the Gromov-Wasserstein distance between two distributions residing in Euclidean space.  A common choice of the distance function is the squared Euclidean loss, and a common choice of the distortion between metric evaluations is the squared loss, in which case the objective is polynomial in $\cX$ (it is in fact quartic).   

\subsection{From probability measures to moment sequences}

As a reminder, the main decision variable in \eqref{eq:ot-r-th} is a probability measure $\bP \in \mathcal{P}(\cX^{2r})$.  Under suitable conditions, a probability measure $\mu \in \mathcal{P}(\cX)$, $\cX \subset \RR^n$, can be represented via its moment sequence $\{\biy_{\boldsymbol{\alpha}}\}_{\alpha \in \NN^n}$, where
\begin{equation*}
    \biy_{\boldsymbol{\alpha}} = \int_{\cX} \bx^{\alpha}~d\mu.
\end{equation*}
In the following, the moment sequence $\biy_{\boldsymbol{\alpha}}$ will be our new decision variable.  That is to say, we will treat the values of these moments as decision variables, and we optimize over these values.  As such, our immediate task is to express the constraints in \eqref{eq:constraint_sym}, \eqref{eq:constraint_mar}, and \eqref{eq:constraint_psd} as constraints in $\biy_{\boldsymbol{\alpha}}$.

\begin{lemma} [Symmetry condition] \label{thm:sym_moments}
   Let $\{\biy_{\boldsymbol{\alpha}}\}_{\boldsymbol{\alpha} \in \NN^{2rn}}$ be a moment sequence representing a probability measure $\bP$ supported on $\cX^{2r}$.  Then $\bP$ satisfies the symmetric condition~\eqref{eq:constraint_sym} iff for any $\boldsymbol{\alpha} = (\alpha_1,\dots,\alpha_{2r}) \in \NN^{2rn}$ with $\alpha_i \in \NN^n \; \forall i \in [2r]$, and any permutation $\sigma \in S_{2r}$, the following holds 
   \begin{equation}\label{eq:constraint_sym_moment}
       \biy_{(\alpha_1,\dots,\alpha_{2r})} =  \biy_{(\alpha_{\sigma(1)},\dots,\alpha_{\sigma(2r)})}.
   \end{equation}
\end{lemma}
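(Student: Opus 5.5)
The plan is to prove the two implications separately. The forward direction is a change of variables; the backward direction reduces to moment determinacy of the permuted measure.

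For each $\sigma\in S_{2r}$, let $T_\sigma:\cX^{2r}\to\cX^{2r}$ denote the block-coordinate permutation
\[
T_\sigma(\bx_1,\ldots,\bx_{2r})=(\bx_{\sigma(1)},\ldots,\bx_{\sigma(2r)}).
\]
This is a linear isometry of $\RR^{2rn}$ that maps $\cX^{2r}$ onto itself. Condition \eqref{eq:constraint_sym} on rectangles says that $\bP$ and $(T_{\sigma})_{\#}\bP$ (with the appropriate inverse convention) agree on all measurable rectangles $C_1\times\cdots\times C_{2r}$. Rectangles form a $\pi$-system generating $\cB(\cX^{2r})$, so by Dynkin's $\pi$--$\lambda$ theorem \eqref{eq:constraint_sym} is equivalent to $(T_\sigma)_{\#}\bP=\bP$ for every $\sigma\in S_{2r}$. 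I will use this reformulation in both directions. Since $S_{2r}$ is a group, it does not matter whether $\sigma$ or $\sigma^{-1}$ appears.

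\textbf{Forward direction.} Assume $(T_\sigma)_{\#}\bP=\bP$. For a monomial $\bx^{\boldsymbol\alpha}=\prod_i\bx_i^{\alpha_i}$, the change-of-variables formula gives
\[
\biy_{(\alpha_1,\ldots,\alpha_{2r})}=\int \bx^{\boldsymbol\alpha}\,d\bP=\int \bx^{\boldsymbol\alpha}\circ T_\sigma\,d\bP .
\]
Moreover $\bx^{\boldsymbol\alpha}\circ T_\sigma=\prod_i \bx_{\sigma(i)}^{\alpha_i}=\prod_j\bx_j^{\alpha_{\sigma^{-1}(j)}}$. Hence $\biy_{\boldsymbol\alpha}=\biy_{(\alpha_{\sigma^{-1}(1)},\ldots)}$, and ranging over all $\sigma$ gives \eqref{eq:constraint_sym_moment}. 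Integrability holds because the moments are assumed to exist, since $\{\biy_{\boldsymbol\alpha}\}$ is given as the moment sequence of $\bP$.

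\textbf{Backward direction.} Assume \eqref{eq:constraint_sym_moment}. The same computation, read in reverse, shows that $(T_\sigma)_{\#}\bP$ has exactly the moment sequence $\{\biy_{\boldsymbol\alpha}\}$. The hypothesis that this sequence ``represents'' $\bP$ is interpreted as moment determinacy: $\bP$ is the unique measure on $\cX^{2r}$ with these moments. We then conclude $(T_\sigma)_{\#}\bP=\bP$, and therefore \eqref{eq:constraint_sym} holds.

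\textbf{Main obstacle.} The main difficulty is justifying determinacy. When $\cX$ is compact, it follows from Stone--Weierstrass, because polynomials are dense in $C(\cX^{2r})$. In the noncompact case, I would invoke the standing assumption, namely that the moment sequence uniquely determines the measure (e.g.\ via a multivariate Carleman condition, which the paper says it will introduce). I would note that this condition is invariant under $T_\sigma$, since $T_\sigma$ only relabels coordinates. I expect to state this reliance explicitly rather than reprove determinacy.
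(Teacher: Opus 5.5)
Your proof is correct and matches the routine verification the paper omits as ``straightforward calculations.'' For the forward direction you use the change of variables $\int \bx^{\boldsymbol\alpha}\,d(T_\sigma)_{\#}\bP=\int \bx^{\boldsymbol\alpha}\circ T_\sigma\,d\bP$; for the converse you use moment determinacy of $\bP$, which the paper builds into the phrase ``moment sequence representing $\bP$.'' You are right to make that reliance explicit, because without determinacy the converse fails: take $\bP=\nu_1\otimes\nu_2$ with $\nu_1\neq\nu_2$ measures on $\RR$ sharing all moments. Note also that only determinacy of $\bP$ itself is used, so you do not need the invariance of the Carleman condition under $T_\sigma$.
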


\begin{lemma} [Marginal condition] \label{thm:mar_moments} Let $\{\biy_{\boldsymbol{\alpha}}\}_{\boldsymbol{\alpha} \in \NN^{2rn}}$ be a moment sequence representing a probability measure $\bP$ supported on $\cX^{2r}$.  Then $\bP$ satisfies the marginal condition~\eqref{eq:constraint_mar} iff for any $\boldsymbol{\alpha} = (\alpha_1,\dots,\alpha_{2r}) \in \NN^{2rn}$ with $\alpha_i \in \NN^n \; \forall i \in [2r]$, and for any $j \in [2r]$, the following holds
\begin{equation}\label{eq:constraint_mar_moment}
    \biy_{(\alpha_1,\dots,\alpha_{j,1},(0,\dots,\alpha_{j,k},\dots,0),\dots,\alpha_{2r})}=m_{\alpha_{j,k}}(\mu_k)\cdot \biy_{(\alpha_1,\dots,\alpha_{j,1},0,\dots,\alpha_{2r})},
\end{equation}
where $\alpha_{j,k} \in \NN^{n_j}$ and the moment $m_{\alpha_{j,k}}(\mu_k) = \int_{\cX_k} \bx^{\alpha_{j,k}}~d\mu_k$ is already defined based on the marginal distribution $\mu_k$.
\end{lemma}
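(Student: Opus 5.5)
Both conditions in \eqref{eq:constraint_mar} are equalities between two finite Borel measures on $\cX_k\times\cX^{2r-1}$, namely $\bP\big|_{\cX_k\times\cX^{2r-1}}$ and $\mu_k\otimes\bP\big|_{\cX^{2r-1}}$. So I will prove the lemma by showing that these two measures have the same mixed moments exactly when \eqref{eq:constraint_mar_moment} holds. Then I will use the standing hypothesis, implicit in the phrase ``a moment sequence representing a probability measure'', that measures in this setting are determined by their moment sequences.

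\textbf{Step 1 (moments of the two sides).} Fix $k\in[m]$ and a slot $j\in[2r]$; by \eqref{eq:constraint_sym} it suffices to take $j=1$, but the computation is identical for any $j$. For the left-hand measure, integrate the monomial $\bx_{1,k}^{\alpha_{1,k}}\prod_{i\ge 2}\bx_i^{\alpha_i}$ against $\bP$. Since all other components of $\bx_1$ carry exponent zero, this integral equals $\biy_{(\ldots,(0,\ldots,\alpha_{j,k},\ldots,0),\ldots)}$, the left side of \eqref{eq:constraint_mar_moment}. For the right-hand measure, apply Fubini to the product $\mu_k\otimes\bP|_{\cX^{2r-1}}$. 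The integral factors as $m_{\alpha_{j,k}}(\mu_k)\cdot\int\prod_{i\neq j}\bx_i^{\alpha_i}\,d\bP$, and the second factor is $\biy$ with the $j$-th block set to $0$, which is the right side of \eqref{eq:constraint_mar_moment}. Integrability is justified because all moments of $\bP$ and of $\mu_k$ are finite by hypothesis; for $\mu_k$ this follows because it is the $k$-th marginal of $\bP$'s first marginal.

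\textbf{Step 2 (forward direction).} If \eqref{eq:constraint_mar} holds, the two measures coincide, so all of their moments agree. By Step 1 this is exactly \eqref{eq:constraint_mar_moment}.

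\textbf{Step 3 (converse; main obstacle).} Suppose \eqref{eq:constraint_mar_moment} holds for all multi-indices. By Step 1 the two measures on $\RR^{n_k}\times\RR^{n(2r-1)}$ then share every moment, and we need the moment problem to be determinate to conclude they are equal. This is the delicate point. I would handle it by appealing to the same determinacy assumption under which $\biy$ ``represents'' $\bP$, which is the Carleman-type condition the paper introduces later. I would check that this condition transfers: if $\bP$ satisfies a multivariate Carleman condition, then so does the product $\mu_k\otimes\bP|_{\cX^{2r-1}}$. The reason is that its marginal moment growth along each coordinate is bounded by that of $\bP$, because $\mu_k$ is a marginal of $\bP$ via \eqref{eq:cond_mar}, or via the marginal constraint applied at the level of the first coordinate. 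Multivariate Carleman determinacy depends only on the one-dimensional marginal moment sequences, so determinacy holds. In the compact case the step is easier: polynomials are dense in continuous functions by Stone--Weierstrass, and equal moments immediately give equal measures by the Riesz representation theorem. Having equality of the two measures for each $k$ then gives \eqref{eq:constraint_mar}.
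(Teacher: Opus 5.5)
Your argument is correct and is exactly the ``straightforward calculation'' the paper omits. Fubini identifies the mixed moments of $\bP|_{\cX_k\times\cX^{2r-1}}$ and of $\mu_k\otimes\bP|_{\cX^{2r-1}}$ with the two sides of \eqref{eq:constraint_mar_moment}. You are also right that the converse needs a determinacy hypothesis which the statement leaves implicit, and a Carleman-type condition is a convenient choice because it passes to coordinate projections. Without some such hypothesis the converse fails: take $\cX_1=\RR$, $\mu_1$ indeterminate, and $\nu\neq\mu_1$ with the same moments; then $\bP=(\nu\otimes\mu_2\otimes\cdots\otimes\mu_m)^{\otimes 2r}$ satisfies \eqref{eq:constraint_mar_moment} but not \eqref{eq:constraint_mar}.

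Two small repairs are needed. First, in the converse, get finiteness and Carleman growth of the moments of $\mu_k$ from \eqref{eq:constraint_mar_moment} with all other blocks set to zero, which exhibits each $m_{\alpha}(\mu_k)$ as a moment of $\bP$; you currently derive them from the marginal condition you are trying to prove, which is circular. Second, reducing to $j=1$ via \eqref{eq:constraint_sym} uses a hypothesis the lemma does not state, so either add it or read \eqref{eq:constraint_mar} as imposed on every slot.
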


The proofs of Lemmas \ref{thm:sym_moments} and \ref{thm:mar_moments} are straightforward calculations, and we omit these.  The conditions \eqref{eq:constraint_sym_moment} and \eqref{eq:constraint_mar_moment} can be simplified if they are required to hold simultaneously:
\begin{equation*}
\begin{aligned}
    & \biy_{(\alpha_1,\dots,\alpha_{2r})} =  \biy_{(\alpha_{\sigma(1)},\dots,\alpha_{\sigma(2r)})} \; \forall (\alpha_1,\dots,\alpha_{2r}) \in \NN^{2rn},\; \sigma \in S_{2r},\\
    & \biy_{((0,\dots,\alpha_{1,k},\dots,0),\dots,\alpha_{2r})}=m_{\alpha_{1,k}}(\mu_k)\cdot \biy_{(0,\dots,\alpha_{2r})}\; \forall k \in [m],\; \alpha_i \in \NN^n,\; \alpha_{1,k} \in \NN^{n_k}.
\end{aligned}
\end{equation*}

The condition \eqref{eq:constraint_psd} is more considerably more delicate.  In the following, we consider two different settings:  In the first, we assume that the underlying distributions satisfy a very permissive requirement known as the generalized Carleman's condition, under which we derive a moment-SOS hierarchy.  In the second, we assume that the sets $\cX_j$ are semi-algebraic.  Using the polynomials that specify $\cX_j$, we derive a correspondingly tighter hierarchy of relaxations. 

\subsection{Generalized Carleman's condition}

The generalized Carleman's condition describe conditions under which a moment sequence specifies a probability measure.  
Concretely, a measure $\mu \in \mathcal{P} (\RR^n)$ is said to satisfy the generalized Carleman's condition if the following holds for all $i \in [n]$:
\begin{equation*}
    \sum_{n=1}^\infty \left(\int x_i^{2n} \right)^{-\frac{1}{2n}}~d\mu(\bx) = +\infty.
\end{equation*}
The Carleman's condition is satisfied by a large family of distributions.  For instance, it contains the gaussian distribution, as well as probability distributions that are compactly supported. 
%It also includes the case when $\cX$ is compact and the marginal measures are probability measures. Assume that $\cX$ is contained in a ball of radius $R$, then for any $i$, we evaluate 
%\begin{equation*}
%    \int_{\cX} x_i^n \leq R^{2n} \; \Rightarrow \sum_{i=1}^\infty \Bigl(\int_\cX x_i^{2n} \Bigr)^{\frac{-1}{2n}} \;\geq\; \sum_{i=1}^\infty \frac{1}{R} = \infty.
%\end{equation*}

We make the following assumption:
\begin{assume}\label{assume: Carleman}
    All the marginal measures $\{\mu_j\}_{j=1}^{m}$ satisfy the generalized Carleman's condition.
\end{assume}
As it turns out, this suffices to imply the generalized Carleman's condition on the space of feasible probability measures on which we optimize over:
\begin{lemma}
    Under Assumption~\ref{assume: Carleman}, all feasible measures $\bP \in \Pi_r$ also satisfy the generalized Carleman's condition.
\end{lemma}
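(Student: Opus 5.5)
The plan is to reduce the generalized Carleman condition for $\bP\in\Pi_r$ to the same condition for the marginals $\mu_j$, using only \eqref{eq:constraint_sym} and \eqref{eq:constraint_mar}. The coordinates of $\cX^{2r}\subset\RR^{2rn}$ are indexed by a block $i\in[2r]$, a marginal space $k\in[m]$ and a coordinate $l\in[n_k]$. Denote such a coordinate by $x_{i,k,l}$. We must show, for every such triple, that
\begin{equation*}
\sum_{s=1}^{\infty}\Bigl(\int_{\cX^{2r}} x_{i,k,l}^{2s}\,d\bP\Bigr)^{-\frac{1}{2s}}=+\infty .
\end{equation*}

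\textbf{Step 1.} Compute the one-dimensional moments of $\bP$. The function $x_{i,k,l}^{2s}$ depends only on block $i$. By \eqref{eq:constraint_sym} we may take $i=1$, so the integral is an integral against the first marginal $\bP|_{\cX}$. The marginal condition \eqref{eq:constraint_mar} says $\bP|_{\cX_k\times\cX^{2r-1}}=\mu_k\otimes\bP|_{\cX^{2r-1}}$. Integrating the nonnegative function $x_{1,k,l}^{2s}$, which depends only on the $\cX_k$-component of the first block, against this identity gives
\begin{equation*}
\int_{\cX^{2r}} x_{1,k,l}^{2s}\,d\bP=\int_{\cX_k} x_l^{2s}\,d\mu_k .
\end{equation*}
Both sides may be $+\infty$ a priori; nonnegativity and monotone convergence make the identity valid in $[0,\infty]$. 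Equivalently, this is Lemma~\ref{thm:mar_moments} with $\alpha$ supported on a single coordinate.

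\textbf{Step 2.} Conclude. The Carleman series for $\bP$ in coordinate $(i,k,l)$ is termwise identical to the Carleman series of $\mu_k$ in coordinate $l$. The latter diverges by Assumption~\ref{assume: Carleman}, so the former diverges as well. This holds for every coordinate, so $\bP$ satisfies the generalized Carleman condition.

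\textbf{Expected obstacle.} The main subtlety is bookkeeping rather than analysis. One has to check that the marginal condition really pins down the full one-dimensional marginal of each coordinate, and not merely the moments of polynomial test functions. This follows because \eqref{eq:constraint_mar} is an equality of measures, so it can be tested against any nonnegative Borel function via truncation $\min(x^{2s},N)$ and monotone convergence.

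A second small point concerns finiteness. If some moment of $\mu_k$ is infinite, the convention $\infty^{-1/(2s)}=0$ applies equally to both series, so they still coincide. In any case, under the Carleman assumption these moments are finite, since the condition is stated in terms of them.

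No PSD constraint is needed. The statement thus holds for any $\bP$ satisfying only \eqref{eq:constraint_sym} and \eqref{eq:constraint_mar}.
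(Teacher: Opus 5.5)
Your proposal is correct and takes the same route as the paper. The paper's one-line proof likewise identifies the even moments of each coordinate of $\bP$ with those of the corresponding coordinate under $\mu_k$, so the two Carleman series coincide termwise and diverge by Assumption~\ref{assume: Carleman}. You are more careful than the paper, which leaves implicit both the use of \eqref{eq:constraint_sym} to reduce to the first block and the monotone-convergence justification.
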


\begin{proof}
    Given $\bx \in \cX^{2r}$, we write (i) $\bx = (\bx_1,\dots,\bx_{2r}) \in \cX^{2r}$, where (ii) $\bx_i =(\bx_{i,1},\dots,\bx_{i,m})\forall i \in [2r]$, and where (iii) $\bx_{i,j} = (\bx_{i,j,1},\dots,\bx_{i,j,n_j})$, with $n_j = \dim \cX_j$.  Using this notation, by applying Assumption~\ref{assume: Carleman}, we have that
    \begin{equation*}
        \sum_{n=1}^\infty \left(\int x_{i,j,k}^{2n} \right)^{-\frac{1}{2n}}~d\mu(\bx) = \sum_{n=1}^\infty \left(\int_{\cX_j} x_k^{2n} \right)^{-\frac{1}{2n}}~d\mu(\bx) = +\infty.
    \end{equation*}
\end{proof}

There are two fundamental consequences of the generalized Carleman condition that underlie our approach. First, a probability measure satisfying this condition is uniquely determined by its moment sequence. Second, for every $1\leq p<\infty$, the space of polynomials is dense in $L^p(\bP)$; see~\cite[Theorem~2.3]{De:2003}. The latter property further implies that the cone of sum-of-squares polynomials is dense in the nonnegative function cone of $L^p(\bP)$.

Indeed, fix $1\leq p<\infty$ and define
\begin{displaymath}
    L_+^p(\bP)
    :=
    \bigl\{
        g\in L^p(\bP):
        g\geq0\quad \bP\text{-a.e.}
    \bigr\}.
\end{displaymath}
By~\cite[Theorem~2.3]{De:2003}, the generalized Carleman condition implies that $\RR[\bx]$ is dense in $L^{2p}(\bP)$. Let $g\in L_+^p(\bP)$. Since
\begin{displaymath}
    \int \left|\sqrt{g}\right|^{2p}\,d\bP
    =
    \int |g|^p\,d\bP
    <\infty,
\end{displaymath}
we have $\sqrt{g}\in L^{2p}(\bP)$. Hence, there exists a sequence of polynomials $\{g_k\}_{k\in\NN}\subset\RR[\bx]$ such that
\begin{displaymath}
    \lim_{k\to\infty}
    \left\|
        g_k-\sqrt{g}
    \right\|_{L^{2p}(\bP)}
    =
    0.
\end{displaymath}
Since $g_k^2\in\Sigma[\bx]$, H\"older's inequality yields
\begin{align*}
    \left\|g_k^2-g\right\|_{L^p(\bP)}
    &=
    \left\|
        (g_k-\sqrt{g})(g_k+\sqrt{g})
    \right\|_{L^p(\bP)}
    \\
    &\leq
    \left\|g_k-\sqrt{g}\right\|_{L^{2p}(\bP)}
    \left\|g_k+\sqrt{g}\right\|_{L^{2p}(\bP)}.
\end{align*}
Moreover,
\begin{displaymath}
    \left\|g_k+\sqrt{g}\right\|_{L^{2p}(\bP)}
    \leq
    \left\|g_k-\sqrt{g}\right\|_{L^{2p}(\bP)}
    +
    2\left\|\sqrt{g}\right\|_{L^{2p}(\bP)},
\end{displaymath}
and therefore the sequence
$\{\|g_k+\sqrt{g}\|_{L^{2p}(\bP)}\}_{k\in\NN}$ is bounded. It follows that
\begin{displaymath}
    \lim_{k\to\infty}
    \left\|g_k^2-g\right\|_{L^p(\bP)}
    =
    0.
\end{displaymath}
Consequently,
\begin{displaymath}
    \overline{\Sigma[\bx]}^{\,L^p(\bP)}
    =
    L_+^p(\bP).
\end{displaymath}
We use these two consequences of the generalized Carleman condition to reformulate~\eqref{eq:ot-r-th} as an optimization problem over moment sequences, stated in the following theorems.
\begin{theorem} \label{thm:conversion_carleman}
    Suppose Assumption~\ref{assume: Carleman} holds.  Then a moment sequence $\{\biy_{\boldsymbol{\alpha}}\}_{\boldsymbol{\alpha} \in \NN^{2rn}}$ admits a feasible measure $\bP \in \Pi_r$ of~\eqref{eq:ot-r-th} iff the following conditions are satisfied simultaneously: 
    \begin{enumerate}
        \item $\biy_{(\alpha_1,\dots,\alpha_{2r})} =  \biy_{(\alpha_{\sigma(1)},\dots,\alpha_{\sigma(2r)})} \; \forall (\alpha_1,\dots,\alpha_{2r}) \in \NN^{2rn},\; \sigma \in S_{2r}$.
        \item $\biy_{((0,\dots,\alpha_{1,k},\dots,0),\dots,\alpha_{2r})}=m_{\alpha_{1,k}}(\mu_k)\cdot \biy_{(0,\dots,\alpha_{2r})}\; \forall k \in [m],\; \alpha_i \in \NN^n,\; \alpha_{1,k} \in \NN^{n_k}$.
        \item $\bM^r_d(\biy) \succeq 0$, $\bQ^{r,t}_{d,l}(\biy) \succeq 0$ for all $0 \leq t \leq r$, and $d, l \in \NN$.  Here, $\bM^r_d(\biy)$ denotes the $d$--truncated moment matrix, defined by 
        \begin{displaymath}
            \bM^r_d(\biy):= \ell_{\biy}(\bv_d(\bx)\bv_d(\bx)^\top),\quad \mbox{note that $\bx \in \RR^{2rn}$}.
        \end{displaymath}
        and $\bQ^{r,t}_{d,l}(\biy)$ is defined by 
        \begin{multline*}
            \bQ^{r,t}_{d,l}(\biy) = \ell_y \big([\bv_d(\bx_{(1,r-t)})\bv_d(\bx_{(r-t+1,2r-2t)})^\top]
            \otimes [\bv_l(\bx_{(2r-2t+1,2r)})\bv_l(\bx_{(2r-2t+1,2r)})^\top] \big).
        \end{multline*}
    \end{enumerate}
    In particular, suppose we let $\cM^r$ denote the subset of moment sequences that satisfy the above conditions.  Then one can reformulate~\eqref{eq:ot-r-th} as 
\begin{equation*}
    \rho^r ~~:=~~ \inf \; \ell_y(c) \quad \mbox{subject to } \quad y \in \cM^r.
\end{equation*}
\end{theorem}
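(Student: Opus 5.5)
The plan is to prove the two implications separately, using Lemmas~\ref{thm:sym_moments} and~\ref{thm:mar_moments} for the first two conditions. The real work is showing that condition 3 is equivalent to \eqref{eq:constraint_psd} once $\biy$ is known to be the moment sequence of a measure. Throughout I will use the two consequences of the generalized Carleman condition recorded above. First, determinacy: the measure is determined by its moments. Second, density: $\RR[\bx]$ is dense in $L^{2p}(\bP)$, and $\Sigma[\bx]$ is $L^p$-dense in $L^p_+(\bP)$.

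\textbf{Necessity.} Let $\bP\in\Pi_r$, so by the preceding lemma $\bP$ satisfies Carleman, and let $\biy$ be its moments. Conditions 1 and 2 follow directly from Lemmas~\ref{thm:sym_moments} and~\ref{thm:mar_moments}; the marginal identity is taken at the first block, which loses nothing by symmetry. The moment matrix satisfies $\bM^r_d(\biy)\succeq 0$ because $\mathbf{p}^\top\bM^r_d\mathbf{p}=\int p^2\,d\bP$.

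For $\bQ^{r,t}_{d,l}$, take a test vector $\mathbf{p}\otimes\mathbf{q}$ and write it as a matrix $A$. Testing with $\mathbf{p}^\top\otimes\mathbf{q}^\top$ on both sides and using the trace form, each PSD test reduces to an expression of the form
\begin{displaymath}
\int f(\bx_{(1:r-t)})\,f(\bx_{(r-t+1:2r-2t)})\,g(\bx_{(2r-2t+1:2r)})\,d\bP, \qquad f\ \text{polynomial},\ g=q^2 \text{ or a sum of squares}.
\end{displaymath}
Here I read $\bQ$ as the natural Kronecker localizing matrix, with the same vector $\bv_d$ used in both $f$ slots. Polynomials are unbounded, so \eqref{eq:constraint_psd} cannot be applied to them directly. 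Instead I approximate. Truncate $f$ to $f_N=\max(-N,\min(f,N))$ and $g$ to $\min(g,N)$; these are bounded Borel functions, so \eqref{eq:constraint_psd} gives nonnegativity. Then pass to the limit by dominated convergence. The dominating function $|f\otimes f\otimes g|$ is integrable because every polynomial lies in $L^1(\bP)$, since all moments are finite.

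\textbf{Sufficiency.} From $\bM^r_d(\biy)\succeq 0$ for all $d$, I want a representing measure $\bP$. The obstacle is that moment-matrix positivity alone is not sufficient in several variables. I would invoke the multivariate Carleman theorem (Nussbaum): a positive semidefinite sequence that satisfies the multivariate Carleman condition is a determinate moment sequence. Condition 2 gives exactly that: the diagonal moments $\biy$ of the coordinates $x_{i,j,k}^{2n}$ equal the moments $m(\mu_j)$, which satisfy Carleman by Assumption~\ref{assume: Carleman}. Symmetry (condition 1) is what lets us move the block $i$ to the first position to apply this. So a representing $\bP$ exists, and it is unique. Lemmas~\ref{thm:sym_moments} and~\ref{thm:mar_moments} then give \eqref{eq:constraint_sym} and \eqref{eq:constraint_mar}.

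\textbf{The PSD condition in the sufficiency direction.} This is the main obstacle. Positivity of $\bQ$ gives
\begin{displaymath}
\int p(\bx_{(1:r-t)})\,p(\bx_{(r-t+1:2r-2t)})\,\sigma(\bx_{(2r-2t+1:2r)})\,d\bP\ge 0
\end{displaymath}
for every polynomial $p$ and every SOS $\sigma$, by summing over the square terms of $\sigma$. Given bounded $f$ and $g\ge 0$, I would proceed in three steps:
\begin{enumerate}
\item Approximate $f$ in $L^4$ of the relevant marginal by polynomials $p_k$, using density in $L^4$.
\item Approximate $g$ in $L^2$ of the relevant marginal by SOS $\sigma_k$.
\item Bound the error terms with Hölder's inequality on $\cX^{2r}$, in the form $\|F_1F_2G\|_{L^1}\le\|F_1\|_{L^4}\|F_2\|_{L^4}\|G\|_{L^2}$.
\end{enumerate}
Since each factor depends on a different block of coordinates, its $L^p(\bP)$ norm equals its norm under the corresponding marginal of $\bP$. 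Those marginals are themselves Carleman measures, so the density statements apply to them. Letting $k\to\infty$ yields \eqref{eq:constraint_psd}.

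Finally, the reformulation of the objective holds because $c$ is a polynomial, so $\int c\,d\bP=\ell_{\biy}(c)$.
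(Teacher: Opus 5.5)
\paragraph{A gap in the necessity direction.}
Your route is the paper's. You get existence of $\bP$ from $\bM^r_d(\biy)\succeq0$ and the Carleman condition on the diagonal moments (the paper cites \cite[Proposition~3.5]{Las:09}), and then use polynomial/SOS density plus H\"older, with exponents $(4,4,2)$ where the paper uses $(3,3,3)$. Your truncation argument for applying \eqref{eq:constraint_psd} to unbounded polynomials supplies a step the paper skips.

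The gap is in proving $\bQ^{r,t}_{d,l}(\biy)\succeq0$. Checking the quadratic form only on elementary tensors $\mathbf{p}\otimes\mathbf{q}$ proves block-positivity, not positive semidefiniteness. A general test vector is $\mathbf{w}=\sum_k\mathbf{p}_k\otimes\mathbf{q}_k$. Put $\mathbf{u}:=\bx_{(1:r-t)}$, $\mathbf{u}':=\bx_{(r-t+1:2r-2t)}$, $\mathbf{z}:=\bx_{(2r-2t+1:2r)}$, $p_k:=\mathbf{p}_k^\top\bv_d$ and $q_k:=\mathbf{q}_k^\top\bv_l$. Then
\[
\mathbf{w}^\top\bQ^{r,t}_{d,l}(\biy)\,\mathbf{w}=\int F(\mathbf{u},\mathbf{z})\,F(\mathbf{u}',\mathbf{z})\,d\bP,\qquad F(\mathbf{u},\mathbf{z}):=\sum_k p_k(\mathbf{u})\,q_k(\mathbf{z}).
\]
This contains cross terms $p_k(\mathbf{u})p_{k'}(\mathbf{u}')q_k(\mathbf{z})q_{k'}(\mathbf{z})$ with $k\neq k'$. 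It is not of the form $\int f(\mathbf{u})f(\mathbf{u}')g(\mathbf{z})\,d\bP$ for a single $f$ and an SOS $g$, so your claim that ``each PSD test reduces'' to that form is false. The paper's written proof has the same lacuna: it only pairs $\bQ^{r,t}_{d,l}(\biy)$ with matrices built from a single coefficient vector $\mathbf{f}$.

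\paragraph{How to close it.}
Use \eqref{eq:constraint_psd} with indicator functions $g$.
\begin{enumerate}
\item Approximate each $q_k$ by simple functions $s^{(n)}_k$ on a common finite Borel partition $\{E_j\}$ of $\cX^{2t}$, with $|s^{(n)}_k|\le|q_k|$ and $s^{(n)}_k\to q_k$ pointwise.
\item Then $F_n:=\sum_k p_k\,s^{(n)}_k=\sum_j f_j(\mathbf{u})\chi_{E_j}(\mathbf{z})$ for some polynomials $f_j$. Since $\chi_{E_j}\chi_{E_{j'}}=0$ for $j\neq j'$,
\[
\int F_n(\mathbf{u},\mathbf{z})F_n(\mathbf{u}',\mathbf{z})\,d\bP=\sum_j\int f_j(\mathbf{u})f_j(\mathbf{u}')\chi_{E_j}(\mathbf{z})\,d\bP\ \ge\ 0 .
\]
Each term is nonnegative by your truncation argument with $g=\chi_{E_j}$.
\item Dominated convergence with the integrable majorant $\sum_{k,k'}|p_k(\mathbf{u})p_{k'}(\mathbf{u}')q_k(\mathbf{z})q_{k'}(\mathbf{z})|$ gives $\mathbf{w}^\top\bQ^{r,t}_{d,l}(\biy)\mathbf{w}\ge0$.
\end{enumerate}
This is exactly where the freedom to take $g$ an indicator is used: disjointness of the $E_j$ removes the cross terms.

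\paragraph{A smaller point.}
Conditions 1--3 are invariant under $\biy\mapsto\lambda\biy$; for instance, $\biy=0$ satisfies all of them. So you must add the normalization $y_{\mathbf 0}=1$, which is tacit in the statement and in the paper. Only then does condition 2 identify the diagonal moments with those of the $\mu_j$, so that the multivariate Carleman theorem applies.
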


\begin{proof}[Proof of Theorem \ref{thm:conversion_carleman}]
     Based on the marginal condition, we have 
     \begin{equation*}
         \sum_{k=1}^\infty[\ell_{\biy}(\bx_{i,j,l}^{2k})]^{\frac{-1}{2k}}= \sum_{k=1}^\infty \Bigl[\int_{\cX_j}x_l^{2k}\Bigr]^{\frac{-1}{2k}}=\infty \quad \forall i \in [2r],\; j \in [m], k \in [n_j].
     \end{equation*}
     This, combined with the condition that $\bM^r_d(\biy) \succeq 0$, implies that there exists a probability $\bP \in \mathcal{P}(\cX^{2r})$ such that the sequence $\biy$ represents the measure $\bP$ \cite[Proposition 3.5]{Las:09}.  Furthermore, because $\bP$ satisfies the Carleman's condition, we recall the following consequences:
     \begin{enumerate}
         \item The set of polynomials is dense in $L^p(\bP)$ for any $1\leq p < \infty$
         \item The set of SOS polynomials is dense in the cone of non-negative functions with respect to $L^p(\bP)$ for any $1 \leq p < \infty$. 
\end{enumerate}
     Recall that  the PSD condition of $\bP$ is expressed as
     \begin{multline*}  \int_{\cX^{2r}}f(\bx_{(1,r-t)})f(\bx_{(r-t+1,2r-2t)})g(\bx_{(2r-2t+1,2r)})~d\bP \geq 0 \\ \forall\quad 0 \leq t \leq r,\; f \in \mathfrak{B}_b(\cX^{r-t}) \subset L^p(\bP),\; g \in \mathfrak{B}_b^+(\cX^{r-t}) \subset L^p_+(\bP).
     \end{multline*}
     We can choose $p=3$, then there exist a sequences of polynomial $\{f_k\}$ converging to $f$ in $L^3(\bP)$-norm and a sequence of SOS polynomial $\{g_k\}$ converging to $g$ in $L^3(\bP)$-norm. Then by H\"older's inequality yields
     \begin{multline*}
         \lim_{k \to \infty} \int_{\cX^{2r}}f_k(\bx_{(1,r-t)})f_k(\bx_{(r-t+1,2r-2t)})g_k(\bx_{(2r-2t+1,2r)})~d\bP \\
         = \int_{\cX^{2r}}f(\bx_{(1,r-t)})f(\bx_{(r-t+1,2r-2t)})g(\bx_{(2r-2t+1,2r)})~d\bP.
     \end{multline*}
     Hence, the PSD condition can be expressed as 
    \begin{multline*}  
    \int_{\cX^{2r}}f(\bx_{(1,r-t)})f(\bx_{(r-t+1,2r-2t)})g(\bx_{(2r-2t+1,2r)})~d\bP \geq 0 \\ \forall\quad 0 \leq t \leq r,\; f \in \RR[\bx_{(1,r-t)},\; g \in \Sigma[\bx_{(1,2t)}].
     \end{multline*} 
     Let $\mathbf{f}$ be the vector of coefficients of the polynomials $f$ (of degree $d$). Let $g$ be a SOS polynomial of degree $2l$, then there exists a PSD matrix $G$ such that
     \begin{displaymath}
         g(\bx_{(2r-2t+1,2r)}) = \langle G, \bv_l(\bx_{(2r-2t+1,2r)})\bv_l(\bx_{(2r-2t+1,2r)})^\top \rangle,\quad G = \sum_k \mathbf{g}_k\mathbf{g}_k^\top.
     \end{displaymath}
     This allows us to represent the integral with the Riesz functionals of $f$  and $g$ as follows: 
     \begin{align*}
         &\quad \int_{\cX^{2r}}f(\bx_{(1,r-t)})f(\bx_{(r-t+1,2r-2t)})g(\bx_{(2r-2t+1,2r)})~d\bP \\
         =&\quad \int_{\cX^{2r}}\langle\mathbf{f}\mathbf{f}^\top, \bv_d(\bx_{(1,r-t)})\bv_d(\bx_{(r-t+1,2r-2t)})^\top \rangle \cdot \langle 
         \langle G, \bv_l(\bx_{(2r-2t+1,2r)})\bv_l(\bx_{(2r-2t+1,2r)})^\top \rangle~d\bP \\
         =&\quad \int_{\cX^{2r}}\Biggl\langle \left(\sum_k\mathbf{f}\otimes\mathbf{g}_k\right)\left(\sum_k\mathbf{f}\otimes\mathbf{g}_k\right)^\top, [\bv_d(\bx_{(1,r-t)})\bv_d(\bx_{(r-t+1,2r-2t)})^\top]\\
         &\hspace{8em}\otimes [\bv_l(\bx_{(2r-2t+1,2t)})\bv_l(\bx_{(2r-2t+1,2t)})^\top)] \Biggr\rangle~d\bP \\
         =&\quad \left\langle \left(\sum_k\mathbf{f}\otimes\mathbf{g}_k\right)\left(\sum_k\mathbf{f}\otimes\mathbf{g}_k\right)^\top, \bQ^{r,t}_{d,l}(\biy) \right\rangle.
     \end{align*}
     Consequently, the requirement that $\bP$ satisfies the PSD condition \eqref{eq:constraint_psd} is equivalent to the requirement that
     \begin{equation*}
         \bM_d^r(\biy) \succeq 0, \quad \text{and}\quad \cQ^{r,t}_{d,l}(\biy) \succeq 0 \quad \forall\, ) \leq t \leq r,\, d,\,l \in \NN.
     \end{equation*}
\end{proof}

By restricting moment sequences to monomials of bounded degree, Theorem \ref{thm:conversion_carleman} suggests the following SDP hierarchy to solve~\eqref{eq:ot-r-th}.

    Given an integer $d$ that $2d \geq \deg(c)$, let $\cM^r_d$ be the set of truncated pseudo moment sequences $\{\biy_{\boldsymbol{\alpha}}\}_{\boldsymbol{\alpha} \in \NN^{2rn}_{2d}}$ satisfying the following conditions: 
    \begin{enumerate}
        \item $\biy_{(\alpha_1,\dots,\alpha_{2r})} =  \biy_{(\alpha_{\sigma(1)},\dots,\alpha_{\sigma(2r)})} \; \forall (\alpha_1,\dots,\alpha_{2r}) \in \NN^{2rn},\; \sigma \in S_{2r}$,
        \item $\biy_{((0,\dots,\alpha_{1,k},\dots,0),\dots,\alpha_{2r})}=m_{\alpha_{1,k}}(\mu_k)\cdot \biy_{(0,\dots,\alpha_{2r})}\; \forall k \in [m],\; \alpha_i \in \NN^n,\; \alpha_{1,k} \in \NN^{n_k}$,
        \item $\bM^r_d(\biy) \succeq 0$, and $\bQ^{r,t}_{p,q}(\biy) \succeq 0 \; \forall p + q \leq d,\; 0 \leq t \leq r$.
    \end{enumerate}
    Consequently, we have the following SDP relaxation 
    \begin{equation}\label{eq: double hierarchy}
        \rho^r_d :=\; \inf\; \ell_y(c) \quad \mbox{subject to } y \in \cM^r_d.
    \end{equation}
    Furthermore, one has $\rho^r_d  \rightarrow \rho$ as $\min \{r,d \} \rightarrow \infty$.

\subsection{Strengthened descriptions for semi-algebraic sets}\label{sec: strengthening hierarchy}

In certain settings, one may be given further information that the spaces $\cX_j$'s are semialgebraic, and whose descriptions are available to us.  In this section, we explain how these descriptions help us obtain semidefinite relaxations that strengthen \eqref{eq: double hierarchy}.

More concretely, suppose all $j \in [m]$ that the sets $\cX_j$ are semialgebraic and whose descriptions are:
\begin{equation*}
    \cX_j = \{\bx : g_{j,k}(\bx) \geq 0 \; \forall k \in [N_j]\} \subseteq \mathbb{R}^{n_j}.
\end{equation*}
Here, the $g_{j,k}$'s are polynomials, while $N_j$ is the number of polynomial required to specify $\cX_j$.  Given a polynomial $f$, let $\lceil f \rceil := \lceil \deg f /2\rceil$, and denote $d_{\min} := \max\{\lceil c \rceil, \lceil g_{j,k} \rceil \; \forall j \in [m],\; k \in [N_j]\}$.  In the following, we introduce two different classes of conditions that each lead to its own set of additional (valid) constraints, which one may include within the above SDP relaxations to further strengthen these relaxations. 

% As a reminder, these conditions are defined on $\overline{\biy} \in (\NN^n)^{2r}_{2d}$, and we define $\ell_{\biy} = \ell_{\overline{\biy}}$, where $\overline{\biy}$ is the pseudo-moment sequence constructed from $\biy$ based on the symmetry structure.

\textbf{Localizing matrices.} One classical approach is to introduce additional PSD constraints associated to the 
\begin{equation*}
    \bM^r_{d- \lceil g_{j,k} \rceil}(g_{j,k}\biy)= \ell_{\biy}(g_{j,k}\bv_{d- \lceil g_{j,k} \rceil}(\bx)\bv_{d- \lceil g_{j,k} \rceil}(\bx)^\top) \succeq 0 \quad \forall\, j \in [m],\; k\in [N_j].
\end{equation*}

\textbf{Approximation of non-negative polynomial.}  In the previous section, we have made an argument that the set of SOS polynomials is dense in the set of non-negative polynomials in $L^2$--sense. However, when the set $\cX$ is a semi-algebraic set, then we can improve the approximation of non-negative polynomials on $\cX$. For example, when solving for $\rho^{(r)}$, we need to consider the semi-algebraic set 
\begin{equation*}
    \cX^{2r} = \{g_{j,k}(\bx_{l}) \geq 0:\; \forall j \in [m],\; k \in [N_j],\; j \in [2r] \}. 
\end{equation*}
where $\cX^{2r} \ni \bx = (\bx_1,\dots,\bx_{2r}), \; \bx_l \in \cX \;\forall\, l \in [2r]$.

Then adding these constraints will probably increase the tightness of SDP relaxation: for any $p, q$ that $p+q = 2d$, 
\begin{multline*}
    \cQ^{r,t}_{p,q}(g_{j,k}) = \ell_{\biy}\Bigl(\bv_{p}(\bx_{(1,r-t)})\bv_p(\bx_{r-t+1,2r-2t})^\top]\\ \otimes [g_{j,k}(\bx_{2r})\bv_{q-\lceil g_{j,k}\rceil}(\bx_{(2r-2t+1,2r)}) \bv_{q-\lceil g_{j,k}\rceil}(\bx_{(2r-2t+1,2r)})^\top] \Bigr) \succeq 0.
\end{multline*}
Here, we only consider the polynomial $g_{j,k}$ on $\bx_{2r}$ because of the symmetry of the $\biy$.

\section{Numerical Experiments}  \label{sec: numerical_experiments}

In this section, we apply our approach to solve a series of quadratic optimization instances over probability measures.  Our goal is to investigate the strength of our proposed relaxation.

As a reminder, there are two parameters that govern the hierarchy:  The parameter $r$ governs the size of the product space (note that the underlying space is $\cX^{2r}$), while the parameter $d$ bounds the degree of the monomials we search over (sometimes known as the {\em truncation order}).  In our experiments, we primarily select $r=1$, while the range of values of $d$ span a larger range, which we specify in our individual experiments.  All experiments are implemented in MATLAB using the SDP modeling toolbox YALMIP~\cite{Lof:2004} with MOSEK as the solver~\cite{Mosek:2026}, and are run on a MacBook Pro equipped with an Apple M1 chip, a 10-core CPU, and 16 GB of RAM.

\subsection{Gromov-Wasserstein distance between Gaussian distributions}

In the first series of experiments, we apply our technique to compute the Gromov-Wasserstein distance between pairs of Gaussian distributions. Let $\mu_{p} \sim \mathcal{N}(\mathbf{m}_p, \Sigma_p)$ and $\mu_{q} \sim \mathcal{N}(\mathbf{m}_q,\Sigma_q)$ be Gaussian distributions on $\RR^m$ and $\RR^n$, respectively.  We consider the SDP relaxation when $r=1$.  For $d \geq 2$, our proposed SDP reads 
\begin{equation*}
    \inf_{\biy \in (\NN)^{2r}_{2d}} \ell_{\biy}((\|p-p'\|_{2}^2 - \|q -q'\|_{2}^2)^2) \quad \mbox{subject to} \quad \biy \in \cM^1_d(\mu_p,\mu_q).
\end{equation*}
Here, $p,p' \in \RR^m$ and $q,q' \in \RR^n$.  In addition, $\cM^1_d (\mu_p,\mu_q)$ is indexed by the collection of multi-indices of the form $(\alpha,\beta)$, $\alpha, \beta \in \NN^{m+n}$, satisfying $|\alpha| + |\beta| \leq 2d$.  Suppose we write $\alpha = (\alpha_1,\alpha_2)$ where $\alpha_1 \in \NN^m$ and $\alpha_2 \in \NN^n$, and it is similar for $\beta$.  Then the feasible set $\cM^1_d (\mu_p,\mu_q)$ is the set of (truncated) pseudo-moment sequences $\biy$ satisfying: 
\begin{enumerate}
    \item {[Symmetric condition]:} For any $\alpha, \beta \in \NN^{m+n}$ such that $|\alpha| +|\beta| \leq 2d$, the following holds 
    \begin{displaymath}
        \biy_{(\alpha,\beta)} = \biy_{(\beta,\alpha)}. 
    \end{displaymath}
    
    \item {[Marginal condition]:} For any $\alpha_1 \in \NN^m, \alpha_2 \in \NN^n, \beta \in \NN^{m+n}$ satisfying $|\alpha_1| +|\beta| \leq 2d$ and $|\alpha_2| + |\beta| \leq 2d$, we have 
    \begin{equation*}
        \biy_{(\alpha_1,0,\beta)} = m_{\alpha_1}(\mu_p)\cdot \biy_{(0,0,\beta)} \quad \mbox{and}\quad y_{(0,\alpha_2,\beta)} = m_{\alpha_2}(\mu_q) \cdot \biy_{(0,0,\beta)}.
    \end{equation*}
    
    \item {[PSD conditions]:}  There are two different matrices which we require to be PSD.  The first is the moment matrix associated to $\biy$, and is given by 
    \begin{equation*}
        \bM^1_d(\biy) = \ell_{\biy}(\bv_d(p,q,p',q')\bv_d(p,q,p',q')^\top) \succeq 0.
    \end{equation*}
     The second is obtained by setting $t=r=1$ (note that when $t=0$, we recover the moment matrix $\bM^1_d(\biy)$), and is given by 
     \begin{equation*}
         \overline{\cQ}^{1,1}_{d} =  \ell_{\biy}(\bv_d(p,q)\bv_d(p',q')^\top) \succeq 0.
    \end{equation*}
\end{enumerate}

In the first example, we consider computing the GW distance between a pair Gaussian distributions in which the covariance matrices are scalar multiples of each other.  In this simplified setting, the analytical value of the GW distance is known: 
\begin{proposition}~\cite[Proposition 5.2]{Del:2022}
    Suppose $m = n$.  Let $\Sigma_p = P_p D_p P_p^\top$ and $\Sigma_q = P_q D_q P_q^\top$ be the respective spectral decompositions.  Suppose that $\Sigma_p$ is non-singular and that there exists a scalar $\lambda \geq 0$ such that $D_p = \lambda D_q$.  Then 
    \begin{equation*}
        \mathrm{GW}^2(\mu_p,\mu_q) := (\lambda -1)^2 (4\mathrm{tr}(\Sigma_p)^2+ 8\|\Sigma_p\|_{F}^2).
    \end{equation*}
\end{proposition}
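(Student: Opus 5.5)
The plan is to prove the formula as stated, under $m=n$, $\Sigma_p$ nonsingular and $D_p=\lambda D_q$. It has two halves: an explicit coupling attaining the value (upper bound), and a lower bound showing no coupling does better.

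\emph{Reduction.} The squared-distance Gromov--Wasserstein cost
\begin{equation*}
\int\bigl(\|p-p'\|_2^2-\|q-q'\|_2^2\bigr)^2\,d\pi(p,q)\,d\pi(p',q')
\end{equation*}
is invariant under translations and orthogonal changes of coordinates in each factor separately. I will therefore first assume $\mathbf{m}_p=\mathbf{m}_q=0$. Next, replacing $p\mapsto P_p^\top p$ and $q\mapsto P_q^\top q$, I will assume $\Sigma_p=D_p$ and $\Sigma_q=D_q=\lambda^{-1}D_p$. Here $\lambda>0$, since $\Sigma_p$ is nonsingular.

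\emph{Upper bound.} I will use the coupling $\pi=(\mathrm{id},T)_\#\mu_p$ with $T(p)=\lambda^{-1/2}p$, which pushes $\mathcal N(0,D_p)$ to $\mathcal N(0,D_q)$. Setting $Z=p-p'\sim\mathcal N(0,2\Sigma_p)$, the cost becomes $(1-\lambda^{-1})^2\,\mathbb E\|Z\|^4$. Isserlis' formula gives
\begin{equation*}
\mathbb E\|Z\|^4=\mathrm{tr}(2\Sigma_p)^2+2\|2\Sigma_p\|_F^2=4\,\mathrm{tr}(\Sigma_p)^2+8\|\Sigma_p\|_F^2 .
\end{equation*}
This yields exactly the bracket in the statement, so the value shape $c\cdot\bigl(4\mathrm{tr}(\Sigma_p)^2+8\|\Sigma_p\|_F^2\bigr)$ comes out directly.

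\emph{Lower bound.} I will expand the quartic cost for an arbitrary coupling $\pi$. The terms $\mathbb E\|p-p'\|^4$ and $\mathbb E\|q-q'\|^4$ are fixed by the marginals. The cross term $-2\,\mathbb E\bigl[\|p-p'\|^2\|q-q'\|^2\bigr]$ expands into
\begin{equation*}
\mathbb E\bigl[\|p\|^2\|q\|^2\bigr]\quad\text{and}\quad \bigl\|\mathbb E[p q^\top]\bigr\|_F^2
\end{equation*}
plus marginal-determined quantities. Following \cite{Del:2022}, I will upper bound $\mathbb E[\|p\|^2\|q\|^2]$ and $\|\mathbb E[pq^\top]\|_F^2$ over couplings. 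For the second, I use $\mathbb E[pq^\top]=\Sigma_p^{1/2}K\Sigma_q^{1/2}$ with $\|K\|_{op}\le1$, and von Neumann's trace inequality. For the first, I bound $\mathbb E\bigl[\|p\|^2\|q\|^2\bigr]$ by the comonotone (monotone rearrangement) value, i.e. Cauchy--Schwarz on the $\chi^2$-type radial parts. Both bounds are attained simultaneously by the scaling map $T$ precisely because $D_p\propto D_q$. This proportionality is where the hypothesis enters; without it the two maximizers may differ.

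\emph{Main obstacle: matching the constant.} The upper-bound computation produces the factor $(1-\lambda^{-1})^2$ in front of the $\Sigma_p$-bracket. Equivalently, it produces $(\lambda-1)^2\bigl(4\mathrm{tr}(\Sigma_q)^2+8\|\Sigma_q\|_F^2\bigr)$, since $\Sigma_p=\lambda P_pD_qP_p^\top$. To obtain the formula exactly as stated, with factor $(\lambda-1)^2$ in front of the $\Sigma_p$-bracket, I must check how this reconciles with the scaling convention of \cite[Proposition~5.2]{Del:2022}, i.e. which measure is scaled by $\lambda$ and any normalization of $\mathrm{GW}^2$. I will track the factor $\lambda$ through the reduction step carefully; I expect this bookkeeping to be the delicate point.

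I flag that I have not shown the stated constant follows under the hypothesis as written. My own computation, with $D_p=\lambda D_q$, gives $(1-\lambda^{-1})^2$ rather than $(\lambda-1)^2$ in front of the $\Sigma_p$-bracket. These two agree only if the roles of $p$ and $q$ in the scaling are interchanged relative to the statement, or if a different normalization of $\mathrm{GW}^2$ is in force.
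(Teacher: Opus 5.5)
The paper gives no proof of this proposition; it only quotes \cite[Proposition~5.2]{Del:2022}. Your two-sided plan is essentially the argument behind that source. For centered marginals, the identity $\mathbb{E}\bigl[\|p-p'\|^2\|q-q'\|^2\bigr]=2\,\mathbb{E}\bigl[\|p\|^2\|q\|^2\bigr]+2\,\mathrm{tr}(\Sigma_p)\mathrm{tr}(\Sigma_q)+4\|\mathbb{E}[pq^\top]\|_F^2$, followed by separate upper bounds on the two coupling-dependent terms, yields exactly the bound $\operatorname{LGW}$ quoted later in the paper. When the spectra are proportional, that bound coincides with the cost of the scaling coupling. One small correction: the comonotone bound and Cauchy--Schwarz are different bounds, not one and the same as your ``i.e.'' suggests. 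The $(\|D_p\|_F-\|D_q\|_F)^2$ term of $\operatorname{LGW}$ comes from Cauchy--Schwarz on $\mathrm{Cov}(\|p\|^2,\|q\|^2)$, using $\mathrm{Var}\|p\|^2=2\|\Sigma_p\|_F^2$. Either bound works here, because $\|T(p)\|^2=\lambda^{-1}\|p\|^2$ attains both.

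The ``main obstacle'' is not a bookkeeping issue, and no normalization of $\mathrm{GW}^2$ can resolve it. The ratio of $(\lambda-1)^2$ to your $(1-\lambda^{-1})^2$ is $\lambda^2$, which depends on $\lambda$, so no fixed constant can reconcile them. Your computation is correct, and the displayed formula is false under the printed hypothesis $D_p=\lambda D_q$ for every $\lambda\neq 1$.

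Concretely, take $\Sigma_p=2I_2$ and $\Sigma_q=I_2$, which satisfy the hypotheses with $\lambda=2$. The formula predicts $4\cdot 16+8\cdot 8=128$. Your matching upper and lower bounds give $(1-\tfrac12)^2\cdot 128=32$. That is also the value the paper's table lists for $\mathcal{N}(0,I_2)$ versus $\mathcal{N}(0,2I_2)$, as it must be by symmetry of $\mathrm{GW}$.

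The roles of $p$ and $q$ were swapped in transcription; the intended hypothesis is $D_q=\lambda D_p$. The paper's own experiments use this reading: they take $\Sigma_p=I$ and $\Sigma_q=\lambda I$, and the tabulated values ($32$, $128$, $60$, \ldots) equal $(\lambda-1)^2\bigl(4\mathrm{tr}(\Sigma_p)^2+8\|\Sigma_p\|_F^2\bigr)$. This reading also explains why $\lambda=0$ (a Dirac target) is allowed alongside a nonsingular $\Sigma_p$.

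Under $D_q=\lambda D_p$, use $T(p)=\sqrt{\lambda}\,p$ in the rotated coordinates, and your argument proves the formula verbatim. Under the hypothesis as printed, your proof establishes $(\lambda-1)^2\bigl(4\mathrm{tr}(\Sigma_q)^2+8\|\Sigma_q\|_F^2\bigr)$. State this correction explicitly rather than leaving the constant open.
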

Here, and in what follows, $\|\cdot\|_{F}$ denotes the Frobenius norm.

The GW distance is invariant under orthogonal transformations on the input distributions.  We note that our proposed SDP relaxation also obeys the same invariance.  In the following experiments, without loss of generality, we choose $\Sigma_p = I$ and $\Sigma_q = \lambda I$ to be the identity, and we set the mean vectors to be the zero vector. 

\begin{table}[H]
\centering
\begin{tabular}{|c|c|c|c|c|c|}
\hline
$n$ & $\lambda$ & $\#$ Variables & $\#$ Variables & $\operatorname{GW^2}$ & SDP-LB \\
& & (Original) & (Reduced) & (Analytic value) & (Our method) \\
\hline 
2 & 2 & 495 & 255 & 32 & 32 \\
2 & 3 & 495 & 255 & 128 & 128 \\
2 & 4 & 495 & 255 & 288 & 288 \\
2 & 5 & 495 & 255 & 512 & 512 \\
\hline
3 & 2 & 1820 & 924 & 60 & 60 \\
3 & 3 & 1820 & 924 & 240 & 240 \\
3 & 4 & 1820 & 924 & 540 & 540 \\
3 & 5 & 1820 & 924 & 960 & 960 \\
\hline
4 & 2 &4845  & 2445  & 96 & 96 \\
4 & 3 &4845  & 2445  & 384 & 384 \\
4 & 4 &4845  & 2445  & 864 & 864 \\
4 & 5 &4845  & 2445  & 1536 & 1536 \\
\hline
\end{tabular}
\caption{Comparison of the lower bounds of the GW-distance obtained via our method (SDP-LB) versus the analytical values ($\mathrm{GW}^2$).}
\end{table}
In all instances considered, our proposed SDP relaxation attains the same value as the analytical value provided in \cite[Proposition 5.2]{Del:2022}, which suggest that the relaxation is exact in this instance.  %We conjecture that, for this particular class of instances, the SDP relaxation is exact. A rigorous proof of this conjecture is left for future work.

The closed-form value of the GW distance for general pairs of Gaussian distributions is not known, to the best of our knowledge.  Instead, the work in work~\cite{Del:2022} provides an analytical bound.  
%To compare the lower bounds given by these SDP relaxation in general setting, we recall the lower bound $\operatorname{LGW}(\mu_p,\mu_q)$ of the Gromov-Wasserstein between Gaussian distributions introduced in the work~\cite{Del:2022}: 
Suppose without any loss of generality that $m \geq n$. Let $P_p,D_p$ and $P_q,D_q$ be the respective diagonalizations of $\Sigma_p = P_p D_p P_p^\top$ and $\Sigma_q = P_q D_q P_q^\top$ which sort eigenvalues in decreasing order. We suppose that $\Sigma_p$ is non-singular. A lower bound for $\operatorname{GW}(\mu_p,\mu_q)$  \cite{Del:2022} is given by
\begin{multline*}
    \operatorname{LGW}(\mu_p,\mu_q) = 4(\operatorname{tr}(D_p)-\operatorname{tr}(D_q))^2 + 4(\|D_p\|_F - \|D_q\|_F)^2 + 4\|D_p^{(n)}-D_q\|_F \\
    + 4(\|D_p^{(n)}\|_F^2 - \|D_q\|_F^2).
\end{multline*}
We use this analytical lower bound as a benchmark for our SDP relaxation.  We summarize our numerical results in the following table:
\begin{table}[H]
\centering
\begin{tabular}{|c|c|c|c|c|c|}
\hline
Distributions & $d$ & $\#$ Variables & $\#$ Variables & Analytical LB & SDP LB \\
& & (Original) & (Reduced) & ($\operatorname{LGW}^2$) & (Our method)\\
\hline 
{\tiny $\mathcal{N}\left(\begin{pmatrix}
    0 \\
    0
\end{pmatrix}, \begin{pmatrix}
    1 & 0\\
    0 & 1
\end{pmatrix}\right)$}  & 2 & 495 & 255 &  & 74.416 \\
and & 3 & 3003 & 1519 & 75.208 & 74.416\\
{\tiny $\mathcal{N}\left(\begin{pmatrix}
    0 \\
    0
\end{pmatrix}, \begin{pmatrix}
    2 & 0\\
    0 & 3
\end{pmatrix}\right)$} & 4 & 12870 & 6470 &  & 75.064 \\
 & 5 &43758  &21942 &  &77.067\\
\hline

{\tiny $\mathcal{N}\left(\begin{pmatrix}
    0 \\
    0
\end{pmatrix}, \begin{pmatrix}
    1 & 0\\
    0 & 1
\end{pmatrix}\right)$}  & 2 & 495 & 255 &  & 138.807 \\
and & 3 & 3003 & 1519 & 141.04 & 138.807\\
{\tiny $\mathcal{N}\left(\begin{pmatrix}
    0 \\
    0
\end{pmatrix}, \begin{pmatrix}
    2 & 0\\
    0 & 4
\end{pmatrix}\right)$} & 4 & 12870 & 6470 &  & 140.593 \\
 & 5 &43758  &21942 &  &151.840\\
\hline

{\tiny $\mathcal{N}\left(\begin{pmatrix}
    0 \\
    0
\end{pmatrix}, \begin{pmatrix}
    1 & 0\\
    0 & 1
\end{pmatrix}\right)$}  & 2 & 495 & 255 &  & 226.148 \\
and & 3 & 3003 & 1519 & 231.067 & 226.148\\
{\tiny $\mathcal{N}\left(\begin{pmatrix}
    0 \\
    0
\end{pmatrix}, \begin{pmatrix}
    2 & 0\\
    0 & 5
\end{pmatrix}\right)$} & 4 & 12870 & 6470 &  & 229.150 \\
 & 5 &43758  &21942 &  &292.005\\
\hline
\end{tabular}
\end{table}

We observe that, as we increase the relaxation order $d$, the SDP relaxations become progressively tighter and eventually improve upon the analytical lower bound $\operatorname{LGW}^2$.  This behavior is consistently observed across numerous numerical experiments. Here, we present three representative examples illustrating this phenomenon.

\subsection{Gromov-Wasserstein between uniform distributions}

In this section, we run experiments on uniform distributions, whose supports are balls and spheres. Since the supports for distributions are compact, we need to add the PSD condition of localizing matrices to enhance the SDP relaxation mentioned in Section~\ref{sec: strengthening hierarchy}.

Let $B^n(1)$ and $B^n(2)$ be the $n$--dimensional balls of radii $1$ and $2$ respectively. Then the localizing matrix are $\bM^1_{d-1}(g_1\biy) \succeq 0$ and $\bM^1_{d-1}(g_2\biy) \succeq 0$, where $g_1(\bx):= 1 - \|\bx\|^2$ and $g_2(\bz) = 2 - \|\bz\|^2$. An upper bound in this case is calculated as follows: 
Let $\mu=\operatorname{Unif}(B^n(1)),\; \nu=\operatorname{Unif}(B^n(2))$.

Consider the dilation coupling
\begin{displaymath}
    Y=2X,\qquad X\sim \mu .
\end{displaymath}
Then $Y\sim \nu$. Let $((X,Y))$ and $((X',Y')$ be two independent copies under this coupling. Since $Y-Y'=2(X-X')$, we have $\|Y-Y'\|^2=4\|X-X'\|^2$. Therefore,

\begin{displaymath}
\left(\|X-X'\|^2-\|Y-Y'\|^2\right)^2 = \left(\|X-X'\|^2-4\|X-X'\|^2\right)^2 =
9\|X-X'\|^4. 
\end{displaymath}
Hence this coupling gives

\begin{displaymath}
    \operatorname{GW}_{2,2}^2(\mu_p,\mu_q) \leq 9\,\mathbb{E}\|X-X'\|^4,
\end{displaymath}
where $X,X'\sim \operatorname{Unif}(B_n(0,1))$ are independent. It remains to compute \(\mathbb{E}\|X-X'\|^4\). We have
\begin{displaymath}
    \|X-X'\|^2 = \|X\|^2+\|X'\|^2-2\langle X,X'\rangle .
\end{displaymath}
Using independence and symmetry, we obtain

\begin{displaymath}
    \mathbb{E}\|X-X'\|^4 = 2\mathbb{E}\|X\|^4 +  2\left(\mathbb{E}\|X\|^2\right)^2
+ 4\mathbb{E}\langle X,X'\rangle^2 .
\end{displaymath}
For $X\sim \operatorname{Unif}(B^n(1))$,
\begin{displaymath}
    \mathbb{E}\|X\|^2=\frac{n}{n+2},\qquad \mathbb{E}\|X\|^4=\frac{n}{n+4}.
\end{displaymath}
Moreover, by isotropy,

\begin{displaymath}
    \mathbb{E}[XX^\top]=\frac{1}{n+2}I_n.
\end{displaymath}
Thus

\begin{displaymath}
    \mathbb{E}\langle X,X'\rangle^2 = \operatorname{tr} \left( \mathbb{E}[XX^\top]\mathbb{E}[X'X'^\top] \right) = \operatorname{tr} \left(\frac{1}{(n+2)^2}I_n \right) = \frac{n}{(n+2)^2}.
\end{displaymath}

Consequently,

\begin{displaymath}
    \mathbb{E}\|X-X'\|^4 = \frac{2n}{n+4} + \frac{2n^2}{(n+2)^2} + 
\frac{4n}{(n+2)^2}.
\end{displaymath}
Simplifying gives

\begin{displaymath}
    \mathbb{E}\|X-X'\|^4 = \frac{4n(n+3)}{(n+2)(n+4)}.
\end{displaymath}
Therefore,
\begin{displaymath}
    \operatorname{GW}_{2,2}^2(\mu_p,\mu_q) \leq  \frac{36n(n+3)}{(n+2)(n+4)}.
\end{displaymath}
We use this upper bound as a benchmark to exanimate the tightness of our experiments, which is summarized in the following table. 
\begin{table}[H]
\centering
\begin{tabular}{|c|c|c|c|c|c|}
\hline
$n$ & $d$ & $\#$ Variables & $\#$ Variables & Upper bound & Lower bound\\
& & (Original) & (Reduced) &  & \\
\hline 
2 & 2 & 495 & 255 & 15 & 15 \\
3 & 2 & 1820 & 924 & 18.514 & 18.514\\
4 & 2 & 4845 & 2445 & 21 & 21 \\
5 & 2 & 10626 &5346 & 22.857 & 22.857 \\
\hline
\end{tabular}
\end{table}

Let $S^{n-1}(1)$ and $S^{n-1}(2)$ be the $n$--dimensional spheres of radii $1$ and $2$ respectively. Then the localizing matrix are $\bM^1_{d-1}(h_1\biy) = 0$ and $\bM^1_{d-1}(h_2\biy) = 0$, where $h_1(\bx):= 1 - \|\bx\|^2$ and $h_2(\bz) = 2 - \|\bz\|^2$. An upper bound in this case is calculated as follows: 
Let $\mu=\operatorname{Unif}(S^{n-1}(1)),\; \nu=\operatorname{Unif}(S^{n-1}(2))$.
Consider the dilation coupling
\begin{displaymath}
    Y=2X,\qquad X\sim \mu \quad \Rightarrow \quad Y\sim \nu
\end{displaymath}
We calculate the upper bound based on this dilation coupling similarly to the case of balls to obtain that 
\begin{displaymath}
    \operatorname{GW}(\mu_p,\mu_q) \leq \frac{36(n+1)}{n}.
\end{displaymath}
We also use this upper bound as a benchmark to exanimate the tightness of our experiments, summarized in the following table. 
\begin{table}[H]
\centering
\begin{tabular}{|c|c|c|c|c|c|}
\hline
$n$ & $d$ & $\#$ Variables & $\#$ Variables & Upper bound & Lower bound\\
& & (Original) & (Reduced) &  & \\
\hline 
2 & 2 & 495 & 255 & 54 & 54 \\
3 & 2 & 1820 & 924 & 48 & 48\\
4 & 2 & 4845 & 2445 & 45 & 45 \\
5 & 2 & 10626 &5346 & 43.2 & 43.2 \\
\hline
\end{tabular}
\end{table}

\subsection{Polynomial kernel cost problem}

In this example, we consider solving the following problem:
\begin{equation} \label{eq:exp3_mainproblem}
    \inf_{\pi \in \Pi(\mu_p,\mu_q)} \int c(p_1,q_1)~d\pi + \int K(p_1,q_1,p_2,q_2)~d \pi \otimes \pi,
\end{equation}
where $\mu_{p} \sim \mathcal{N}(\mathbf{m}_{p},\Sigma_{p})$ and $\mu_{q} \sim \mathcal{N}(\mathbf{m}_{q},\Sigma_{q})$ are Gaussian distributions in $\mathbb{R}^{m}$ and $\mathbb{R}^{n}$ respectively.  The vector of monomials of degree up to $2$ for the variables $(p_1,q_1)$ and $(p_2,q_2)$ are denoted by $\bv_2(p_1,q_1)$ and $\bv_2(p_2,q_2)$, respectively.  The polynomial $c$ is defined by 
\begin{equation*}
    c(p_1,q_1)= \langle \bc, \bv_2(p_1,q_1) \rangle,
\end{equation*}
and the polynomial kernel $K$ is defined by 
\begin{equation*}
    K(p_1,q_1,p_2,q_2) = \bv_2(p_1,q_1)' * Q * \bv_2(p_2,q_2).
\end{equation*}
The coefficient vector $\bc$ and the coefficient matrix $Q$ are randomly generated, and we specify the generation process later.  We solve this problem via our proposed framework with the choice of parameters $r = 1$ and $d = 2$.  The resulting SDP relaxation is 
\begin{equation} \label{eq:exp3_relax}
    \inf_{\biy \in \cM} \ell_{\biy} \left(c(p_1,q_1) + K(p_1,q_1,p_2,q_2)\right),
\end{equation}
where $\cM$ is the collection of (truncated) pseudo-moment sequences of degree up to $4$ satisfying the following conditions:
\begin{enumerate}
    \item[(i)] [Symmetry]: $\biy_{(\alpha,\beta)} = \biy_{(\beta,\alpha)} \quad \forall \alpha, \beta \in \NN^{m+n};\; |\alpha| + |\beta| \leq 4$.
    \item[(ii)] [Marginal condition]: 
    \begin{multline*}
        y_{(\alpha,0,\gamma)} = \left(\int p_1^\alpha~d\mu\right)\cdot y_{(0,0,\gamma)},\quad \mbox{and} \quad y_{(0,\beta,\gamma)} = \left(\int p_1^\alpha~d\nu\right)\cdot y_{(0,0,\gamma)}\\
        \quad \forall \alpha \in \NN^m,\; \beta \in \NN^n,\; \gamma \in \NN^{m+n}\; |\alpha| + |\gamma| \leq 4, |\beta| + |\gamma| \leq 4.
    \end{multline*}
    \item[(iii)] [PSD]: 
    \begin{equation*}
    \begin{aligned}
        \bM_2(\biy) ~=~ & \ell_{\biy} \left( \bv_2(p_1,q_1,p_2,q_2)\bv_2(p_1,q_1,p_2,q_2)^\top \right) \succeq 0, \\
        q(\biy) ~=~ & \ell_{\biy} \left( \bv_2(p_1,q_1)\bv_2(p_2,q_2)^\top \right) \succeq 0.
    \end{aligned}
    \end{equation*}
\end{enumerate}
The experiments are conducted as follows: For each choice of dimensions $(m,n)$, we run 100 trials with random instantiations of the parameters $\mathbf{m}_p \in \RR^m,\;\Sigma_p \in S_+^m,\; \mathbf{m}_q \in \RR^n,\; \Sigma_p \in S_+^n,\; \bc \in \RR^{\binom{m+n+2}{2}}$ and $Q \in S_+^{\binom{m+n+2}{2}}$.  Here, $\bc$ is a random vector whose entries are drawn from $\mathrm{Unif}[0,1]$, while $Q = GG^\top$, where $G$ is a random matrix whose entries are drawn standard Gaussian distribution. 

We check if solution from the SDP relaxation \eqref{eq:exp3_relax} is exact using the following steps:  Let $\hat{\biy}$ be an optimal solution to \eqref{eq:exp3_relax}.  Consider 
\begin{equation*}
    \mathrm{Proj}(\hat{\biy}) := \ell_{\hat{\biy}}(\bv_2(p_1,q_1)).    
\end{equation*}
Because $\mathrm{Proj}(\hat{\biy})$ retains all moments with degree at most $2$, we can extract the mean vector $\mathbf{m}$ and covariance matrix $\Sigma$.  Furthermore, by the marginal conditions, we have
\begin{displaymath}
    \mathbf{m} = (\mathbf{m}_p,\mathbf{m}_q)^\top,\quad \mbox{and} \quad \Sigma = \begin{pmatrix}
        \Sigma_p & * \\
        * & \Sigma_q
    \end{pmatrix}.
\end{displaymath}
We are guaranteed that $\Sigma$ is PSD because it is a principle submatrix of $\bM_2(\hat{\biy})$.  Therefore, $\bm$ and $\Sigma$ are valid mean and covariance matrix the Gaussian distribution $\mathcal{N}(\mathbf{m},\Sigma) \in \Pi(\mu_{p},\mu_{q})$. In particular, the following specifies a valid upper bound to \eqref{eq:exp3_mainproblem}.
\begin{equation*}
    \operatorname{UB} := \langle \bc, \mathrm{Proj}(\hat{\biy}) \rangle + \mathrm{Proj}(\hat{\biy})^\top*Q*\mathrm{Proj}(\hat{\biy}).
\end{equation*}
On the other hand, the solution of the SDP relaxation gives a valid lower bound to \eqref{eq:exp3_mainproblem}:
\begin{equation*}
    \operatorname{LB} := \ell_{\hat{\biy}} \left(c(p_1,q_1) + K(p_1,q_1,p_2,q_2)\right).
\end{equation*}
Combining these, we declare the relaxation to be exact if $\operatorname{UB}/\operatorname{LB} \in [0.99,1.01]$.  We summarize the results in the following table: 
\begin{table}[H]
\centering
\begin{tabular}{|c|c|c|c|c|}
\hline
$(m,n)$ & Random & $\#$ Variables & $\#$ Variables & Fraction of \\
& model of $Q$ & (Original) & (Reduced) & Exact Instances \\
\hline 
(2,3) & Random Sym. & 1001 & 511 & 99/100 \\
(2,4) & Random Sym. & 1820 & 924 &  99/100 \\
(3,3) & Random Sym. & 1820 & 924 & 99/100 \\
(2,3) & Random Neg Def. & 1001 & 511 & 77/100 \\
(2,4) & Random Neg Def. & 1820 & 924 &  92/100 \\
(3,3) & Random Neg Def. & 1820 & 924 & 85/100 \\
\hline
\end{tabular}
\end{table}

% \begin{enumerate}
%     \item $m= n =3$, the number of variables is 1820 and the symmetry reduces the number variables down to 924, and 99/100 trials are tight.
%     \item $m=2,\; n=4$ , the number of variables is 1820 and the symmetry reduces the number variables down to 924, and 99/100 trials are tight.
%     \item $m=2,\; n=3$ , the number of variables is 1001 and the symmetry reduces the number variables down to 511, and 99/100 trials are tight.
%     \item $m=2,\; n=3$ , and $Q$ is negative semidefintie. the number of variables is 1001 and the symmetry reduces the number variables down to 511, and 85/100 trials are tight.
%     \item $m= n =3$, and $Q$ is negative semidefintie. the number of variables is 1820 and the symmetry reduces the number variables down to 924, and 92/100 trials are tight.
%     \item $m=2,\; n=4$, and $Q$ is negative semidefintie. the number of variables is 1820 and the symmetry reduces the number variables down to 924, and 77/100 trials are tight.
% \end{enumerate}

\bibliographystyle{plain}
\bibliography{quadmeas}

\begin{thebibliography}{10}

\bibitem{BPT:12}
Grigoriy Blekherman, Pablo~A. Parrilo, and Rekha~R. Thomas.
\newblock {\em Semidefinite {O}ptimization and {C}onvex {A}lgebraic {G}eometry}.
\newblock MOS-SIAM Series on Optimization. SIAM, 2012.

\bibitem{BdK:02}
Immanuel~M. Bomze and Etienne de~Klerk.
\newblock Solving {S}tandard {Q}uadratic {O}ptimization {P}roblems via {L}inear, {S}emidefinite and {C}opositive {P}rogramming.
\newblock {\em Journal of Global Optimization}, 24, 2002.

\bibitem{chen:2023semidefinite}
Junyu Chen, Binh~Tuan Nguyen, Shang~Hui Koh, and Yong~Sheng Soh.
\newblock Semidefinite {R}elaxations of the {G}romov-{W}asserstein {D}istance.
\newblock In {\em NeurIPS}, 2024.

\bibitem{deFinetti:37}
Bruno de~Finetti.
\newblock La pr{\'e}vision: ses lois logiques, ses sources subjectives.
\newblock {\em Annales de l'Institut Henri Poincar{\'e}}, 7(1):1--68, 1937.

\bibitem{De:2003}
Marcel de~Jeu.
\newblock Determinate {M}ultidimensional {M}easures, the {E}xtended {C}arleman {T}heorem and {Q}uasi-analytic {W}eights.
\newblock {\em The Annals of Probability}, 31(3):1205--1227, 2003.

\bibitem{Del:2022}
Julie Delon, Agnes Desolneux, and Antoine Salmona.
\newblock Gromov--{W}asserstein distances between {G}aussian distributions.
\newblock {\em Journal of Applied Probability}, 59(4):1178--1198, 2022.

\bibitem{HewittSavage:55}
Edwin Hewitt and Leonard~J. Savage.
\newblock Symmetric {M}easures on {C}artesian {P}roducts.
\newblock {\em Transactions of the American Mathematical Society}, 80(2):470--501, 1955.

\bibitem{Las:01}
Jean~B. Lasserre.
\newblock Global {O}ptimization with {P}olynomials and the {P}roblem of {M}oments.
\newblock {\em SIAM Journal on Optimization}, 11(3):796--817, 2001.

\bibitem{Las:09}
Jean~B. Lasserre.
\newblock {\em Moments, {P}ositive {P}olynomials and their {A}pplications}, volume~1.
\newblock World Scientific, 2009.

\bibitem{Lof:2004}
Johan L{\"o}fberg.
\newblock {YALMIP}: A toolbox for modeling and optimization in {MATLAB}.
\newblock In {\em Proceedings of the 2004 IEEE International Symposium on Computer Aided Control Systems Design}, pages 284--289, Taipei, Taiwan, 2004.

\bibitem{Mem:07}
Facundo M{\'e}moli.
\newblock On the use of {G}romov-{H}ausdorff {D}istances for {S}hape {C}omparison.
\newblock In {\em Symposium on Point Based Graphics 07}, 2007.

\bibitem{Mem:11}
Facundo M{\'e}moli.
\newblock Gromov--{W}asserstein {D}istances and the {M}etric {A}pproach to {O}bject {M}atching.
\newblock {\em Foundations of Computational Mathematics}, 11:417--487, 2011.

\bibitem{Mosek:2026}
{MOSEK ApS}.
\newblock {\em The {MOSEK} Optimization Toolbox for {MATLAB} Manual}, 2026.
\newblock Version 11.2.

\bibitem{Motzkin:65}
Theodore~S. Motzkin and Ernst~G. Straus.
\newblock Maxima for {G}raphs and a {N}ew {P}roof of a {T}heorem of {Tur{\'a}n}.
\newblock {\em Canadian Journal of Mathematics}, 17:533--540, 1965.

\bibitem{MN:24}
Olga Mula and Anthony Nouy.
\newblock Moment-{SoS} {M}ethods for {O}ptimal {T}ransport {P}roblems.
\newblock {\em Numer. Math.}, 156(4):1541--1578, 2024.

\bibitem{Nie:2013Approximation}
Jiawang Nie.
\newblock An {A}pproximation {B}ound {A}nalysis for {Lasserre's} {R}elaxation in {M}ultivariate {P}olynomial {O}ptimization.
\newblock {\em Journal of the Operations Research Society of China}, 1:313--332, 2013.

\bibitem{Nie:2014}
Jiawang Nie.
\newblock Optimality {C}onditions and {F}inite {C}onvergence of {Lasserre's} hierarchy.
\newblock {\em Mathematical Programming}, 146(1--2):97--121, 2014.

\bibitem{Par:00}
Pablo~A. Parrilo.
\newblock {\em Structured {S}emidefinite {P}rograms and {S}emialgebraic {G}eometry {M}ethods in {R}obustness and {O}ptimization}.
\newblock Phd thesis, California Institute of Technology, 2000.

\bibitem{Pass:15}
Brendan Pass.
\newblock Multi-marginal optimal transport: {T}heory and applications.
\newblock {\em ESAIM: Mathematical Modelling and Numerical Analysis}, 49(6):1771--1790, 2015.

\bibitem{Put:93}
Mihai Putinar.
\newblock Positive polynomials on compact semi-algebraic sets.
\newblock {\em Indiana University Mathematics Journal}, 42(3):969--984, 1993.

\bibitem{San:15}
Filippo Santambrogio.
\newblock {\em Optimal {T}ransport for {A}pplied {M}athematicians: Calculus of Variations, PDEs, and Modeling}.
\newblock Birkh{\"a}user, 2015.

\bibitem{Schmud:2017}
Konrad Schm{\"u}dgen.
\newblock {\em The {M}oment {P}roblem}.
\newblock Springer, 2017.

\bibitem{slot:2022}
Lucas Slot.
\newblock Sum-of-squares hierarchies for polynomial optimization and the {C}hristoffel--{D}arboux kernel.
\newblock {\em SIAM Journal on Optimization}, 32(4):2612--2635, 2022.

\bibitem{Sturm:06}
Karl-Theodor Sturm.
\newblock On the {G}eometry of {M}etric {M}easure {S}paces.
\newblock {\em Acta Mathematica}, 196:65--131, 2006.

\bibitem{tran:2026sum}
Hoang~Anh Tran, Binh~Tuan Nguyen, and Yong~Sheng Soh.
\newblock Sum-of-{S}quares {H}ierarchy for the {G}romov--{W}asserstein {P}roblem.
\newblock {\em SIAM Journal on Optimization}, 36(2):729--759, 2026.

\bibitem{tran:2025convergence}
Hoang~Anh Tran and Kim-Chuan Toh.
\newblock On the convergence rates of moment-{SOS} {H}ierarchies {A}pproximation of {T}runcated {M}oment {S}equences.
\newblock {\em arXiv preprint arXiv:2507.00572}, 2025.

\bibitem{tran:2026convergence}
Hoang~Anh Tran and Kim-Chuan Toh.
\newblock Convergence {R}ates of {S}um-of-{S}quares {H}ierarchies for {P}olynomial {S}emidefinite {P}rograms.
\newblock {\em SIAM Journal on Optimization}, 36(2):760--790, 2026.

\bibitem{Vil:03}
Cédric Villani.
\newblock {\em Topics in {O}ptimal {T}ransportation}.
\newblock American Mathematical Society, 2003.

\bibitem{Vil:08}
Cédric Villani.
\newblock {\em Optimal {T}ransport: {O}ld and {N}ew}.
\newblock Springer Berlin Heidelberg, 2008.

\bibitem{Vil:2016}
Soledad Villar, Afonso~S Bandeira, Andrew~J Blumberg, and Rachel Ward.
\newblock A {P}olynomial-time {R}elaxation of the {G}romov-{H}ausdorff distance.
\newblock {\em arXiv preprint arXiv:1610.05214}, 2016.

\end{thebibliography}

\end{document}